\newcommand{\Z}{{\mathbb Z}}                   
\newcommand{\R}{{\mathbb R}}                   
\newcommand{\C}{{\mathbb C}}                   
\newcommand{\ZZ}{{\mathbb Z}}                   
\newcommand{\RR}{{\mathbb R}}                   
\newcommand{\CC}{{\mathbb C}}                   
\newcommand{\CZ}{{\mathcal Z}}
\newcommand{\adam}{\color{blue}}
\newtheorem{theorem}{Theorem}[section]  
\newtheorem{proposition}[theorem]{Proposition}
\newtheorem{corollary}[theorem]{Corollary}
\newtheorem{lemma}[theorem]{Lemma}
\newtheorem{conjecture}[theorem]{Conjecture}
\theoremstyle{definition}
\newtheorem{definition}[theorem]{Definition}
\theoremstyle{remark}
\newtheorem{remark}[theorem]{Remark}
\newtheorem{example}[theorem]{Example}
\newcommand{\SL}{\operatorname{SL}}
\def\u{u}  
\def\pt{pt}
\renewcommand{\pmod}[1]{\,(\mathrm{mod}\,\,#1)}
\title{Fixed point counts and motivic invariants of bow varieties of affine type A}
\author{\ \hskip 1.85 true cm \'Ad\'am Gyenge}
\address{Budapest University of Technology and Economics, 
Department of Algebra and Geometry, 
Institute of Mathematics, 
M\H{u}egyetem rakpart 3, H-1111, 
Budapest, Hungary}
\email{gyenge.adam@ttk.bme.hu}
\author{Rich\'ard Rim\'anyi}
\address{University of North Carolina, Chapel Hill, NC, USA}
\email{rimanyi@email.unc.edu}
\let\@wraptoccontribs\wraptoccontribs
\address{Alfr\'ed R\'enyi Institute of Mathematics, Budapest, Hungary}
\email{gharcos@renyi.hu}
\subjclass[2020]{Primary 14D20; Secondary 	14D21, 16G20}
\keywords{bow variety, torus fixed point, generating series, equivariant K-theory}
\begin{document}

\begin{abstract}
We compute the equivariant K-theory of torus fixed points of Cherkis bow varieties of affine type A. We deduce formulas
for the generating series of the Euler numbers of these varieties and observe their modularity in certain cases. We also obtain refined formulas on the motivic level for a class of bow varieties strictly containing Nakajima quiver varieties. These series hence generalise results of Nakajima-Yoshioka. As a special case, we obtain formulas for certain Zastava spaces. 
We define a parabolic analogue of Nekrasov's partition function and find an equation relating it to the classical partition function.
\end{abstract}

\begin{abstract}
    The collection of spaces called Cherkis bow varieties include Nakajima quiver varieties, and have favorable properties; for example they are closed for 3d mirror symmetry. In this paper we prove formulas for the generating series of the Euler numbers of bow varieties and observe their modularity, in some cases. We develop the combinatorics to describe the equivariant K-theory class of the tangent spaces at torus fixed points. We identify a sub-class of bow varieties that contain the quiver varieties and Zastava spaces that share favorable properties with them, such as no odd cohomology. For such quiver-like varieties we prove refined motivic enumeration series generalise results of Nakajima-Yoshioka. We define a parabolic analogue of Nekrasov's partition function and find an equation relating it to the classical partition function.
\end{abstract}

\begin{abstract}
    The class of spaces known as Cherkis bow varieties includes Nakajima quiver varieties and exhibits several favorable properties, such as being closed under 3d mirror symmetry. In this paper, we derive formulas for the generating series of the Euler numbers of bow varieties and explore their modularity in certain cases. We prove combinatorial expressions for the equivariant K-theory class of the tangent spaces at torus fixed points. We identify a subclass of bow varieties that includes quiver varieties and Zastava spaces, sharing advantageous features like the absence of odd cohomology. For these quiver-like varieties, we refine the motivic enumeration series, extending results of Nakajima and Yoshioka. Furthermore, we introduce a parabolic variant of Nekrasov's partition function and show how it relates to the classical partition function.
\end{abstract}

\maketitle

\section{Introduction}

Cherkis bow varieties of affine type $\hat{A}$ were originally introduced as an ADHM-type construction to describe moduli spaces of Yang-Mills instantons on ALF spaces, such as the Taub-NUT space, that are equivariant under a cyclic $\mathbb{Z}/m\mathbb{Z}$-action \cite{cherkis2011instantons}. Subsequently, Nakajima and Takayama provided an algebro-geometric description of these bow varieties using quivers \cite{nakajima2017cherkis}. These varieties are algebraic with hyper-Kähler structures on their regular loci. In the algebraic setting, they are conjectured to correspond to moduli spaces of equivariant parabolic framed torsion-free sheaves on the projective plane blown up at a point.

Bow varieties are associated to a combinatorial object called brane diagram, which consists of $m$ NS5 branes and $n$ D5 branes, along with a set of integers referred to as dimensions and local charges. The varieties are naturally equipped with a torus action that has finitely many fixed points. Combinatorial descriptions of these fixed points were provided in \cite{nakajima2018towards} using generalized Maya diagrams, and in \cite[Appendix~A]{rimanyi2020bow} using tie diagrams.

The enumerative geometry of bow varieties was initiated in \cite{rimanyi2020bow}, where the focus was on characteristic classes (called stable envelopes) in the finite type A case. The expected 3d mirror symmetry property of these characteristic classes was later proven in \cite{BottaRimanyi}. While type~A bow varieties are significant, the truly important moduli spaces, such as Hilbert schemes or moduli spaces of torsion-free framed sheaves, are bow varieties of affine type~$\hat{A}$.

\smallskip

In this paper, we study bow varieties of affine type $\hat{A}$ and aim to perform instanton counting by computing various motivic invariants of naturally occurring infinite sequences of bow varieties. Our first result concerns the Euler characteristic as a motivic invariant, and we obtain the following theorem.

\begin{theorem}[{Theorem~\ref{thm:eulerchargen}}] 
\label{thm:main1}
Let $e \in \ZZ^{n}$ and $f \in \ZZ^{m}$ be fixed vectors. Together with an additional dimension parameter $d \in \ZZ_{\geq 0}$ they determine a bow variety $\mathcal{M}(d,e,f)$. We have
\label{thm:eulerchargen_intro}
\[\sum_{d=0}^{\infty} \chi(\mathcal{M}(d,e,f))q^d =  \left( \sum_{\substack{c \in \ZZ^{n \times m} \\ \sum_j c_{ij}=e_i \\ \sum_{i}c_{ij}=f_j }}
q^{\sum_{i,j} c_{ij}(c_{ij}-1)/2}
\right) \cdot \prod_{l=1}^{\infty} \left(\frac{1}{1-q^l}\right)^{nm}. \]
\end{theorem}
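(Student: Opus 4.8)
The plan is to turn the Euler characteristic into a weighted count of torus fixed points and then recognise the resulting lattice sum as the stated product. Since each $\mathcal{M}(d,e,f)$ carries a torus action with only finitely many fixed points, the Bialynicki-Birula decomposition (equivalently, localisation for the topological Euler characteristic) gives $\chi(\mathcal{M}(d,e,f))=\#\,\mathcal{M}(d,e,f)^{T}$. Thus the left-hand side becomes the fixed-point counting series $\sum_{d\geq 0}\#\,\mathcal{M}(d,e,f)^{T}\,q^{d}$, and the theorem is reduced to a purely combinatorial identity.

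Next I would use the classification of fixed points by tie (or Maya) diagrams recalled in the introduction. The aim is to encode each fixed point as an $n\times m$ array of charged partitions: for each pair $(i,j)$ a charge $c_{ij}\in\mathbb{Z}$ together with an ordinary partition $\lambda_{ij}$, in such a way that the framing vectors $e,f$ impose exactly the margin conditions $\sum_{j}c_{ij}=e_i$ and $\sum_{i}c_{ij}=f_j$ on the charges, while the partitions $\lambda_{ij}$ remain completely free. The exponent $nm$ in the product is a direct reflection of these $nm$ independent partition slots.

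The crux is then the grading (dimension) formula
\[
 d \;=\; \sum_{i,j}\binom{c_{ij}}{2} \;+\; \sum_{i,j}|\lambda_{ij}|,
\]
which splits the parameter $d$ additively into a discrete ``charge energy'' and a total box count. The triangular term $\binom{c}{2}=c(c-1)/2$ is precisely the energy of the ground-state (minimal) configuration of charge $c$ in the Maya-diagram picture, and $|\lambda_{ij}|$ records the excitation above it. Matching this combinatorial energy with the geometric dimension datum of the bow variety --- that is, verifying that the $q$-grading on fixed points is exactly $\sum_{ij}\binom{c_{ij}}{2}+\sum_{ij}|\lambda_{ij}|$ --- is the step I expect to be the main obstacle, since it requires the detailed charge and tie bookkeeping of the Nakajima--Takayama construction.

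Granting the split grading, the series factors at once. Summing over all $nm$-tuples of partitions with the charge matrix held fixed contributes $\prod_{l\geq1}(1-q^l)^{-nm}$ by Euler's identity $\sum_{\lambda}q^{|\lambda|}=\prod_{l\geq1}(1-q^l)^{-1}$; since this factor is independent of $c$, it pulls out of the sum, leaving $\sum_{c}q^{\sum_{ij}\binom{c_{ij}}{2}}$ over integer matrices with the prescribed margins, which is exactly the displayed right-hand side. Convergence as a formal power series is automatic, because $\binom{c}{2}\geq 0$ for every $c\in\mathbb{Z}$, so only finitely many charge matrices contribute to each coefficient of $q$.
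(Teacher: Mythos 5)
Your proposal is correct and follows essentially the same route as the paper: the paper also reduces $\chi$ to a count of torus fixed points (Maya diagrams), introduces a core--quotient decomposition in which the core is exactly your charge matrix $c=(c_{ij})$ with the margin constraints, proves your grading formula as Lemma~\ref{lem:dij} together with the observation that each left-shift of a $1$ raises $d$ by one (contributing the free $nm$-tuple of partitions), and then factors the series exactly as you do.
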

The appearance of contingency tables $c$, which are integer matrices with prescribed row and column sums, is noteworthy. 
These tables are fundamental objects in both enumerative combinatorics \cite{barvinok} and statistics \cite{lauritzen}. In the summation, the $q$-exponent is quadratic, resembling a squared distance. This observation identifies $Z(q)$ as a (shifted) lattice theta function, aligning with the physical expectation that instanton counts exhibit modular properties. In the Appendix, authored by G.~Harcos, we establish the specific modularity for a particular example.

\smallskip

In the remainder of the paper, we begin the study of Bia\l ynicki-Birula decompositions of bow varieties and the resulting motivic enumerations, leading to a $(q,t)$-generalization of the $q$-count in Theorem~\ref{thm:main1}.

Such enumerations, containing substantial birational \cite{bellamy2020birational} or motivic information \cite{nakajima2005instanton, gyenge2018euler}, are already known for the subclass of Nakajima quiver varieties. However, extending these results to bow varieties reveals unexpected complexities. Bow varieties are not as `perfect' as Nakajima quiver varieties---or, depending on one’s perspective, they possess richer motivic structures. For instance, unlike quiver varieties, the union of Bia\l ynicki-Birula cells does not generally cover the entire variety $\mathcal{M}(d,e,f)$. Additionally, in contrast to quiver varieties, bow varieties may exhibit odd cohomology groups.

Hence, as a preparation for refined counting formulas, we give detailed geometric and combinatorial analysis of bow varieties, as follows.

\smallskip

First, following Witten's arguments \cite{witten}, we provide a combinatorial characterization of quiver varieties within the broader class of bow varieties. Specifically, this characterization requires that the vector $e \in \Z^n$ of D5 local charges be an {\em $m$-bounded non-decreasing sequence of integers}:
\begin{equation*}
e_1 \leq e_2 \leq \dots \leq e_n \leq e_1 + m .
\end{equation*}

Second, we present two combinatorial descriptions of the equivariant K-theory class of the tangent space $T_x\mathcal{M}$, where $x$ is a torus fixed point on the bow variety $\mathcal{M}$. These descriptions are given in Theorems~\ref{thm:sum01pairs} and~\ref{thm:tangentcharm}, which constitute a significant portion of our paper and are among our main results. We believe these theorems will be essential for future geometric studies of instanton moduli spaces. One description uses extended Young diagrams, which generalize the combinatorics typically associated with quiver varieties, while the other uses the combinatorics of Maya diagrams.

Third, we observe that the brane diagram of any bow variety can be transformed into that of a quiver variety through a combinatorial operation we call \emph{D5-swap}. We study the geometric implications of the D5-swap on bow varieties. From the physics perspective, a D5-swap is often (but not always) seen as an innocent transformation that does not alter the underlying theory. Mathematically, a D5-swap can be either a benign change (such as a homotopy equivalence) or a more substantial one. For instance, the fixed point count remains unchanged under D5-swaps (Fig.~\ref{fig:D5swap}), and our Theorem~\ref{thm:D5swap_tangent} explains that a D5-swap resembles a situation in which one variety is a fibration over the other. However, sharper statements are not generally expected, as demonstrated by counterexamples in \cite{ji2023bow}.

Fourth, for quiver varieties, the Bia\l ynicki-Birula cell decomposition induced by a natural one-parameter subgroup covers the entire variety. We provide a necessary and, conjecturally, also sufficient condition for the analogous cell decomposition to cover a bow variety. The condition is that the local charge vector $e$ must be an {\em $m+1$ bounded non-decreasing sequence of integers}, a relaxation of the condition that would otherwise classify the bow variety as a quiver variety. This identifies an interesting sub-class of {\em quiver-like bow varieties}.

\smallskip

After these preparations we can state and prove a refinement of the Euler characteristic computation to motivic expressions and virtual Poincar\'e polynomials, in certain cases. For example, we obtain 
\begin{theorem}[{Theorem~\ref{thm:poincare_zastava}}, Corollary~\ref{cor:Zminus,m=1}]
\label{thm:main3}
Let $m=1$ and $n$ arbitrary. 
\begin{itemize}
    \item
The Poincaré polynomial of the $(-)$-cells of $\mathcal{M}=\mathcal{M}(d,e,f)$ can be expressed as
\[P^-_t(\mathcal{M})=\sum_{(B_1,\dots,B_n)} \prod_{\beta=1}^n t^{2(n|Y_{\beta}|-\beta l(Y_\beta) + \sum_{1 \leq \alpha < \beta} \binom{|e_{\beta}-e_{\alpha}-1|}{2}} 
\] 
where the summation runs over $n$-tuples of extended Young diagrams $(B_1,\dots,B_n)$ with $d$ fixed, $(Y_1,\dots,Y_n)$ are their classical parts and $l(Y_i)$ is the number of columns of $Y_i$. 
\item For certain integers $A(e), B(e)$ we have
\[
\sum_{d \geq 0} P^-_t(\mathcal{M}(d,e,f))q^d = 
q^{A(e)}
\cdot 
t^{2B(e)}
\cdot \prod_{i=1}^n \prod_{l \geq 1}^{\infty} \frac{1}{1-t^{2(nl-i)}q^l},\]
where $P^-_t$ counts virtual Betti numbers arising from the negative weights for a one-parameter subgroup of the torus.
\end{itemize}
\end{theorem}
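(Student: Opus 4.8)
The plan is to realise $P^-_t(\mathcal{M})$ as a weighted count of torus fixed points and then to evaluate the resulting generating series by factoring it into partition generating functions. For the reduction, recall that when $m=1$ the torus fixed points of $\mathcal{M}(d,e,f)$ are indexed by $n$-tuples $(B_1,\dots,B_n)$ of extended Young diagrams of total size compatible with $d$, with $Y_\beta$ the classical part of $B_\beta$. I would fix a generic one-parameter subgroup $\sigma$ of the torus and use the associated Bia\l ynicki--Birula decomposition. On the smooth bow variety each attracting ($(-)$) cell is isomorphic to an affine space whose dimension equals the number $d^-(x)$ of strictly negative $\sigma$-weights in $T_x\mathcal{M}$, so that
\[ P^-_t(\mathcal{M}) = \sum_x t^{2 d^-(x)}, \]
the sum running over fixed points $x$ with the prescribed $d$. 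This reduces the first assertion to an explicit computation of $d^-(x)$.

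For that computation I would invoke the tangent-character formulas of Theorems~\ref{thm:sum01pairs} and~\ref{thm:tangentcharm}, specialised to $m=1$, which express $[T_x\mathcal{M}]$ in equivariant K-theory as an explicit sum of monomials indexed by the boxes of the diagrams $B_\beta$ and by pairs $(\alpha,\beta)$. Pairing each weight with $\sigma$ and collecting the strictly negative contributions, I expect the count to organise into three pieces: a term $n|Y_\beta|$ from the box contributions scaled by the $n$ D5 charges, a term $-\beta\, l(Y_\beta)$ from the columns of $Y_\beta$, and a box-independent term $\sum_{\alpha<\beta}\binom{|e_\beta-e_\alpha-1|}{2}$ coming from the interaction of the asymptotic (charge) data for pairs $\alpha<\beta$. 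Summing over $\beta$ would give
\[ d^-(x) = \sum_{\beta=1}^n\Bigl( n|Y_\beta| - \beta\, l(Y_\beta) + \sum_{1\le\alpha<\beta}\binom{|e_\beta-e_\alpha-1|}{2}\Bigr), \]
which, substituted into the fixed-point sum, yields the first displayed formula.

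For the generating series I would use that the classical parts account for $d$ up to an $e$-dependent shift, $d=\sum_\beta|Y_\beta|+A(e)$, so that the charge-interaction term factors out as a global power $t^{2B(e)}$ with $B(e)=\sum_{\beta}\sum_{\alpha<\beta}\binom{|e_\beta-e_\alpha-1|}{2}$. What remains,
\[ \sum_{(Y_1,\dots,Y_n)}\ \prod_{\beta=1}^n (q\,t^{2n})^{|Y_\beta|}\, t^{-2\beta\, l(Y_\beta)}, \]
splits as a product over $\beta$ of independent sums over single partitions. Applying the elementary identity $\sum_Y x^{|Y|} z^{l(Y)}=\prod_{l\ge1}(1-z x^l)^{-1}$ with $x=q\,t^{2n}$ and $z=t^{-2\beta}$ turns the $\beta$-factor into $\prod_{l\ge1}\bigl(1-t^{2(nl-\beta)}q^l\bigr)^{-1}$, and relabelling $\beta\mapsto i$ produces exactly the asserted product, completing the second formula.

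The delicate point is the weight count: extracting the sign of each tangent weight for the generic $\sigma$ in a uniform way, checking that no zero weights occur (or that they cancel), and reconciling the Maya-diagram bookkeeping of Theorem~\ref{thm:tangentcharm} with the extended-Young-diagram statistics $|Y_\beta|$ and $l(Y_\beta)$ appearing in the target. One also has to justify that the attracting cells are genuine affine spaces, which rests on smoothness of $\mathcal{M}$ and standard Bia\l ynicki--Birula theory; together with the covering criterion recalled above (the $m+1$-bounded condition) this controls when $P^-_t$ computes the full virtual Poincar\'e polynomial. The remaining steps are formal once the weight count is in hand.
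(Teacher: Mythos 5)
Your proposal follows essentially the same route as the paper: the paper also identifies $P^-_t$ with $\sum_x t^{2d^-(x)}$ over Bia\l ynicki--Birula cells, computes $d^-(x)$ by counting negative weights in the tangent character of Theorem~\ref{thm:tangentcharm} for the one-parameter subgroup \eqref{eq:subgroup_choice} (splitting into the cases $\alpha=\beta$, $\alpha<\beta$, $\alpha>\beta$, which produce exactly your three pieces $n|Y_\beta|$, $-\beta\,l(Y_\beta)$, and the charge-interaction binomials), and then derives the product formula from the standard partition generating function exactly as you do. The one step you defer --- the explicit sign determination of each weight --- is precisely the content of the paper's case analysis, and your predicted outcome matches it.
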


Similarly, we derive formulas for the case when $n = 1$ and $m$ is arbitrary (see Proposition~\ref{prop:n1marbgenfn}), and examples when $n,m>1$. Additionally, we prove that no product formula exists for the $(+)$ or $(-)$-Poincar\'e polynomials when both $n > 1$ and $m > 1$ (see Corollary~\ref{cor:noprodnmarb}), thereby resolving a question that was open even for quiver varieties.
If the bow variety is quiver-like, the result is even stronger:

\begin{theorem}[{Theorem~\ref{thm:cbbcelldec_cover}}]
\label{thm:main4}
Assume that $e \in \ZZ^n$ is an $m+1$-bounded non-decreasing sequence, that is 
\begin{equation*}
e_1 \leq e_2 \leq \dots \leq e_n \leq e_1 + m +1.
\end{equation*}
Then the Bia\l ynicki-Birula $(+)$-cells cover $\mathcal{M}(d,e,f)$ for any $d \in \ZZ_{\geq 0}$ and $f \in \ZZ^m$. In particular, Theorem~\ref{thm:main3} gives the Poincar\'e polynomial of a retract of $\mathcal{M}(d,e,f)$.
\end{theorem}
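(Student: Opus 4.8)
The plan is to deduce the covering statement from the \emph{semiprojectivity} of $\mathcal{M}(d,e,f)$ with respect to the chosen one-parameter subgroup $\sigma$ of the torus, using the standard fact that for a smooth variety carrying a $\mathbb{G}_m$-action the Bia\l ynicki--Birula $(+)$-cells cover if and only if the limit $\lim_{t\to 0}\sigma(t)\cdot x$ exists in the variety for every point $x$. First I would record the structural input. Since bow varieties are GIT quotients of the Nakajima--Takayama linear-algebra data, there is a canonical $\sigma$-equivariant affinization morphism $\pi\colon \mathcal{M}(d,e,f)\to \mathcal{M}_0(d,e,f)=\operatorname{Spec}\mathbb{C}[\mathcal{M}(d,e,f)]$, and $\pi$ is projective, being $\operatorname{Proj}$ of the ring of semi-invariants over $\operatorname{Spec}$ of the invariants. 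By the valuative criterion of properness applied to the punctured orbit map $t\mapsto \sigma(t)\cdot x$ on $\mathbb{A}^1\setminus\{0\}$, a limit on $\mathcal{M}$ exists as soon as the image limit on the affine base $\mathcal{M}_0$ exists. Thus the problem reduces to showing that the induced $\sigma$-action on $\mathcal{M}_0$ contracts every point to the $\sigma$-fixed locus as $t\to 0$.

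Second, this contraction on the affine base is equivalent to the statement that the grading of the invariant ring $\mathbb{C}[\mathcal{M}_0]$ by $\sigma$-weight is non-negative: a homogeneous invariant of negative weight would force the flow to escape to infinity on the locus where it is nonzero, while non-negativity of all weights guarantees that every $t\to 0$ limit exists. By a Le~Bruyn--Procesi-type generation statement for the quiver-with-relations data attached to the brane diagram, this ring is generated by traces along oriented cycles together with framed closed paths, so it suffices to check that each such generator has $\sigma$-weight $\geq 0$. The weight of a cycle is the sum of the weights of its constituent arrows, and these arrow weights are precisely the data read off from the tangent-space descriptions of Theorems~\ref{thm:sum01pairs} and~\ref{thm:tangentcharm}.

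The crux, and the step I expect to be the main obstacle, is the purely combinatorial verification that the non-decreasing $m+1$-bounded condition $e_1\leq\cdots\leq e_n\leq e_1+m+1$ forces every generating cycle to have non-negative $\sigma$-weight. In the $m$-bounded (genuine quiver variety) case all minimal cycles automatically have non-negative weight, which is the classical reason quiver varieties are semiprojective; relaxing the bound by one unit introduces a single borderline family of closed paths whose weight is controlled by the differences $e_\beta-e_\alpha$ and the number $m$ of NS5 branes, and the content of the estimate is that for $m+1$-bounded $e$ this weight is still exactly $0$, whereas it would become negative for $m+2$-bounded sequences—matching the expected sharpness of the condition. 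I would carry this out by a direct book-keeping of arrow weights along the line of the brane diagram, supplemented where convenient by transporting the computation to quiver shape through a sequence of D5-swaps and the fibration-type comparison of Theorem~\ref{thm:D5swap_tangent}, which relates the $\sigma$-weights on the two varieties.

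Finally, having established semiprojectivity, I would conclude in the standard way: the $(+)$-cells then pave $\mathcal{M}(d,e,f)$ by affine spaces, and $\mathcal{M}(d,e,f)$ deformation retracts onto its core $\pi^{-1}(\mathcal{M}_0^{\sigma})$, which is compact and carries all the cohomology. Its Poincar\'e polynomial is the sum over the isolated fixed points of $t^{2\dim}$ of the attracting cells, and since the attracting-cell dimension at each fixed point is complementary to the repelling-cell dimension computed in Theorem~\ref{thm:main3} (the generic $\sigma$ having no zero weights on the tangent spaces), this yields the Poincar\'e polynomial of the retract exactly as asserted.
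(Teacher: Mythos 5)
Your overall reduction is sound and in fact mirrors what the paper does implicitly: the covering statement \eqref{eq:cellscover} follows once one knows that all $\gamma$-limits exist on the affine base $\mathcal{M}_0$, and properness of $\pi$ lifts this to $\mathcal{M}$. But the proposal has a genuine gap exactly where you flag ``the crux'': the non-negativity of the $\sigma$-weights of the generators of $\mathbb{C}[\mathcal{M}_0]$ is only announced, not proved. This is not a routine verification that can be waved through. You would first need a first-fundamental-theorem statement identifying generators of the $\mathcal{G}$-invariants of the bow data (traces of cycles in the doubled diagram together with paths between the one-dimensional framing spaces $\mathbb{C}_{E_i}$ and $\mathbb{C}_{E_j}$), and then a weight estimate for the framed paths, whose $\sigma$-weight involves the difference $r_i-r_j$ of framing weights (which changes sign with $i\lessgtr j$) against the accumulated $t_1,t_2$-weights; it is precisely this trade-off that encodes the $m+1$-bounded condition, and none of it is carried out. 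A further confusion: you assert that the relevant arrow weights are ``read off from the tangent-space descriptions of Theorems~\ref{thm:sum01pairs} and~\ref{thm:tangentcharm},'' but those theorems compute weights of tangent spaces at fixed points, not weights of arrows of the linear data $\mathbb{M}$; the two are related but not interchangeable, and the cycle/path weights you need come from the construction of $\mathbb{M}$ in Section~2, not from the fixed-point formulas.

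For comparison, the paper avoids analyzing the invariant ring altogether. It uses Theorem~\ref{thm:tangentcharm} to exhibit a \emph{single} fixed point whose entire tangent space lies in the positive weight space: the $Q$-part is handled by the known existence of a maximal cell for affine type $A$ quiver varieties (the shift $t_1^{e_\beta-e_\alpha}$ does not affect $t_2$-weights), and the $R$-part is where the hypothesis enters, since the $m+1$-bounded non-decreasing condition forces every monomial of $R^{e_\alpha,e_\beta}_{\alpha,\beta}$ in \eqref{eq:Rdef} to have the required sign of $t_2$-weight. The existence of one maximal-dimensional $(+)$-cell then yields circle compactness by Proposition~\ref{prop:MaxCellCircleCompact} (the dense attracting cell forces the grading of $\mathbb{C}[\mathcal{M}_0]$ to be non-negative, which is the conclusion you were trying to reach generator by generator). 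If you want to salvage your route, the honest path is to prove the weight inequality for framed paths directly; as written, the proposal establishes the soft part of the argument and leaves the theorem's actual content unproved.
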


Furthermore, we establish results concerning the equivariant homology groups in the special case of $m = 1$, known also as \emph{Zastavas}. We relate the bow variety analogs of Nekrasov partition functions to those of the quiver variety $M(r,n)$---the moduli space parametrizing rank $r$ torsion-free framed sheaves on the projective plane with second Chern number $n$.

\begin{corollary}[{Corollary~\ref{cor:partfunasquivfn}}] 
\label{cor:main5}
Let $m=1$. Assume that $e \in \ZZ^n$ is a $2$-bounded non-decreasing sequence. Then for any $d \in \mathbb{Z}_{\geq 0}$,
\[Z(\varepsilon_1,\varepsilon_2,a,q )= Z^{M(r,n)}(\varepsilon_1,\varepsilon_2,a + \varepsilon_1 e,q) \cdot \prod_{\substack{1 \leq \alpha, \beta \leq n \\ (s_1,s_2) \in R^{e_{\alpha},e_{\beta}}_{\alpha,\beta}}} \frac{1}{ s_1\varepsilon_1+s_2\varepsilon_2+a_{\beta}-a_{\alpha}}.\]
\end{corollary}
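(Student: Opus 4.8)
The plan is to realize both sides as generating series of equivariant integrals evaluated by torus localization, and then to match them fixed point by fixed point. Writing $Z(\varepsilon_1,\varepsilon_2,a,q)=\sum_d q^d\int_{\mathcal{M}(d,e,f)}1$ and applying the Atiyah--Bott localization formula, we get $\int_{\mathcal{M}(d,e,f)}1=\sum_{x}1/e^T(T_x\mathcal{M})$, the sum running over the finitely many torus fixed points $x$ and $e^T$ denoting the product of the tangent weights. The hypothesis that $e$ be a $2$-bounded non-decreasing sequence makes $\mathcal{M}(d,e,f)$ quiver-like (Theorem~\ref{thm:main4}), which is exactly what guarantees that this localization is legitimate and free of the pathologies—odd cohomology, Bia\l ynicki-Birula cells failing to cover—that obstruct it in general. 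The same localization expresses $Z^{M(r,n)}$ as a sum over $n$-tuples of ordinary Young diagrams, with $e^T$ of the tangent spaces computed by the classical Nakajima/Carlsson--Okounkov formula.

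The first step is to set up the bijection between fixed point sets. For $m=1$ the fixed points of $\mathcal{M}(d,e,f)$ are indexed by $n$-tuples of extended Young diagrams $(B_1,\dots,B_n)$, each $B_\alpha$ built from a classical Young diagram $Y_\alpha$ together with an infinite boundary profile rigidly determined by $e_\alpha$. Discarding the boundary and retaining only the classical parts gives a bijection $(B_\alpha)\mapsto(Y_\alpha)$ onto the fixed points of $M(r,n)$, where the rank $r$ equals the number $n$ of D5 branes and the second Chern number is the summation variable; along the way one records how $d$ and the $q$-grading correspond to $\sum_\alpha|Y_\alpha|$.

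The heart of the argument, and the step I expect to be the main obstacle, is a character identity. Using the tangent-space description of Theorem~\ref{thm:tangentcharm} (equivalently Theorem~\ref{thm:sum01pairs}) I would compute the equivariant character of $T_x\mathcal{M}$ at $x=(B_\alpha)$ and compare it with the character of $T_{(Y_\alpha)}M(r,n)$. The goal is to show that after the substitution $a\mapsto a+\varepsilon_1 e$,
\[
\operatorname{ch} T_x\mathcal{M}=\operatorname{ch} T_{(Y_\alpha)}M(r,n)\;+\;\sum_{\alpha,\beta}\;\sum_{(s_1,s_2)\in R^{e_\alpha,e_\beta}_{\alpha,\beta}}\bigl(s_1\varepsilon_1+s_2\varepsilon_2+a_\beta-a_\alpha\bigr),
\]
so that the two tangent characters differ by a collection of weights indexed by $R$ that depends only on $e$ and not on the classical parts $Y_\alpha$. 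I would establish this by matching the two formulas column by column: the contributions of the classical parts $Y_\alpha$ reproduce the shifted quiver character, while the contributions of the rigid boundary profiles telescope into the fixed-point-independent remainder. It is precisely here that $2$-boundedness is indispensable—it pins the boundary profiles down tightly enough that the leftover terms are finite, independent of the fixed point, and free of the cancellations that would otherwise spoil a clean product, and it identifies the index set with $R^{e_\alpha,e_\beta}_{\alpha,\beta}$.

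With the character identity in hand the corollary follows by taking equivariant Euler classes. Passing to the product of weights turns the additive identity into the multiplicative relation $e^T(T_x\mathcal{M})=e^T(T_{(Y_\alpha)}M(r,n))\cdot\prod_{\alpha,\beta,(s_1,s_2)}(s_1\varepsilon_1+s_2\varepsilon_2+a_\beta-a_\alpha)$, and since the second factor is independent of the fixed point it pulls out of the localization sum. Summing $q^{|x|}/e^T(T_x\mathcal{M})$ over the fixed points and reindexing by the bijection of the first step then yields $Z=Z^{M(r,n)}(\varepsilon_1,\varepsilon_2,a+\varepsilon_1 e,q)\cdot\prod 1/(s_1\varepsilon_1+s_2\varepsilon_2+a_\beta-a_\alpha)$, as claimed.
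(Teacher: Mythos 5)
Your proposal is correct and follows essentially the same route as the paper: localization expresses $Z$ as $\sum_M q^{|M|}/e(T_M)$ (Proposition~\ref{prop:nekrasov_coeff}), Theorem~\ref{thm:tangentcharm} splits each tangent character into a $Y$-dependent part that becomes the Nekrasov character after the shift $a\mapsto a+\varepsilon_1 e$ plus the $R^{e_\alpha,e_\beta}_{\alpha,\beta}$-term, and the latter is independent of the fixed point and so factors out of the sum. One small correction: the $R$-term is finite and fixed-point-independent for \emph{any} $e$; the $2$-boundedness hypothesis is not needed for that telescoping step but only to ensure circle compactness (condition \eqref{eq:cellscover}), without which the parabolic partition function is not defined in the first place.
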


The rest of the paper is organised as follows. In Section~\ref{sec:branediags} we recall the definition of brane diagrams and the construction of bow varieties in affine type $\hat{A}$.  Section~\ref{sec:torusaction} explores a torus action on bow varieties and its fixed points. We provide a combinatorial condition for a bow variety to be a quiver variety and investigate the effect of swapping D5 branes in Section~\ref{sec:quivers_D5swaps}. We introduce a core-quotient decomposition for fixed point count, obtain results on generating series of Euler numbers, and prove Theorem~\ref{thm:main1} in Section~\ref{sec:gen_series}. Section~\ref{sec:equivK} discusses the equivariant K-theory of the tangent spaces at the torus fixed points and establishes the crucial Theorems~\ref{thm:sum01pairs} and~\ref{thm:tangentcharm}. We analyze the BB cells and prove Theorems~\ref{thm:main3} and \ref{thm:main4} in Section~\ref{sec:refinedgenseries}. In Section~\ref{sec:partition_function} we introduce the parabolic partition function associated with bow varieties and deduce Corollary~\ref{cor:main5}. Appendix~\ref{sec:modularity} shows that one of our fixed point count generating series is modular, and it also reveals connections to concepts of number theory (the sum-of-divisors function, theta functions).

\subsection*{Acknowledgement} 
The authors are thankful to T. Botta and B. Szendr{\H o}i for helpful discussions on the topic. Á.Gy.~was supported by the János Bolyai Research Scholarship of the Hungarian Academy of Sciences. R.R. was supported by NSF grants 2152309 and 2200867. G.H. was supported by the MTA–HUN-REN RI Lend\"ulet ``Momentum'' Automorphic Research Group and NKFIH (National Research, Development and Innovation Office) grant K~143876.

\section{Brane diagrams and bow varieties of type $\hat{A}$}
\label{sec:branediags}

In this section we recall the combinatorics of brane diagrams and the associated Cherkis bow varieties of type~$\hat{A}$, as defined in \cite[Section~2.2]{nakajima2017cherkis}. We follow the notation of \cite[Section~3.1]{rimanyi2020bow}, but add another $\C^*$ action, and make a few minor notation changes. Familiarity with these two papers may be helpful for the reader. 

\subsection{Brane diagrams, Hanany-Witten transition}
\label{sec:branediagrams}

A brane diagram of type $\hat{A}$ (or affine type $A$) is a circle decorated with some line segments which are slanted either from left to right (these called NS5 branes: $F_1, F_2,\ldots, F_m$), or from right to left (these are called D5 branes: $E_1, E_2, \ldots, E_n$). The 5-branes partition the circle into segments called D3 branes. Each D3 brane is decorated with an integer $d_s$, its \emph{multiplicity} or \emph{dimension}. (Only non-negative multiplicities will be relevant geometrically, but it is combinatorially convenient to permit integers.)

In our figures we will put the 5-branes on the `top arc' of the circle; this way left and right has a meaning. An upper $+$ (respectively $-$) index of a 5-brane will refer to the D3 brane on its right, respectively left, see
\begin{center}
\begin{tikzpicture}
\draw[thick] (-0.1,0) -- (5.1,0);
\draw[red,thick] (-0.2,-0.3) -- (0.2,0.3);
\draw[red,thick] (0.8,-0.3) -- (1.2,0.3);
\draw[blue,thick] (1.8,0.3) -- (2.2,-0.3);
\draw[red,thick] (2.8,-0.3) -- (3.2,0.3);
\draw[blue,thick] (3.8,0.3) -- (4.2,-0.3);
\draw[blue,thick] (4.8,0.3) -- (5.2,-0.3);
\draw[rounded corners=8pt,thick] (5.1,0) -- (6,0) -- (6,-1) -- (-1,-1) -- (-1,0) -- (-0.1,0);
\draw (0.5,0.3) node {2};
\draw (1.5,0.3) node {2};
\draw (2.5,0.3) node {3};
\draw (3.5,0.3) node {2};
\draw (4.5,0.3) node {2};
\draw (5.5,0.3) node {1};
\draw (0.05,-.35) node {$F_1$};
\draw (1.05,-.35) node {$F_2$};
\draw (1.95,-.35) node {$E_1$};
\draw (3.05,-.35) node {$F_3$};
\draw (3.95,-.35) node {$E_2$};
\draw (4.95,-.35) node {$E_3$};
\draw (7,-.5) node {$E_3^+=F_1^-$.};
\end{tikzpicture}
\end{center}

\begin{definition}
Assume $d_2+d'_2 = d_1+d_3+1$. Carrying out the local change
\begin{center}
\begin{tikzpicture}
\draw[thick] (0.1,0) -- (2.9,0);
\draw[blue,thick] (0.8,0.3) -- (1.2,-0.3);
\draw[red,thick] (1.8,-0.3) -- (2.2,0.3);
\draw (0.5,0.3) node {$d_1$};
\draw (1.5,0.3) node {$d_2$};
\draw (2.5,0.3) node {$d_3$};
\draw[<->, thick] (4,0) -- (5,0);
\draw (4.5,0.3) node {HW};
\draw[thick] (6.1,0)--(8.9,0);
\draw[red,thick] (6.8,-0.3) -- (7.2,0.3);
\draw[blue,thick] (7.8,0.3) -- (8.2,-0.3);
\draw (6.5,0.3) node {$d_1$};
\draw (7.5,0.3) node {$d'_2$};
\draw (8.5,0.3) node {$d_3$};
\end{tikzpicture}
\end{center}
(in either direction) in a brane diagram is called a Hanany-Witten transition.
\end{definition}

Both of the diagrams
\begin{center}
\begin{tikzpicture}
\draw[thick] (-0.1,0) -- (5.1,0);
\draw[red,thick] (-0.2,-0.3) -- (0.2,0.3);
\draw[red,thick] (0.8,-0.3) -- (1.2,0.3);
\draw[red,thick] (2.2,0.3) -- (1.8,-0.3);
\draw[blue,thick] (3.2,-0.3) -- (2.8,0.3);
\draw[blue,thick] (3.8,0.3) -- (4.2,-0.3);
\draw[blue,thick] (4.8,0.3) -- (5.2,-0.3);
\draw[rounded corners=8pt,thick] (5.1,0) -- (5.5,0) -- (5.5,-1) -- (-0.5,-1) -- (-0.5,0) -- (-0.1,0);
\draw (0.5,0.3) node {2};
\draw (1.5,0.3) node {2};
\draw (2.5,0.3) node {2};
\draw (3.5,0.3) node {2};
\draw (4.5,0.3) node {2};
\draw (2.5,-0.75) node {1};
\end{tikzpicture}
\hskip 1 true cm
\begin{tikzpicture}
\draw[thick] (-0.1,0) -- (5.1,0);
\draw[red,thick] (-0.2,-0.3) -- (0.2,0.3);
\draw[red,thick] (0.8,-0.3) -- (1.2,0.3);
\draw[red,thick] (2.2,0.3) -- (1.8,-0.3);
\draw[blue,thick] (3.2,-0.3) -- (2.8,0.3);
\draw[blue,thick] (3.8,0.3) -- (4.2,-0.3);
\draw[blue,thick] (4.8,0.3) -- (5.2,-0.3);
\draw[rounded corners=8pt,thick] (5.1,0) -- (5.5,0) -- (5.5,-1) -- (-0.5,-1) -- (-0.5,0) -- (-0.1,0);
\draw (0.5,0.3) node {4};
\draw (1.5,0.3) node {3};
\draw (2.5,0.3) node {2};
\draw (3.5,0.3) node {2};
\draw (4.5,0.3) node {1};
\draw (2.5,-0.75) node {4};
\end{tikzpicture}
\end{center}
 are Hanany-Witten equivalent to the one above.

\subsection{Bow varieties}
\label{subsec:bowvarieties}

To each D3 brane $X$, we assign a vector space $V_X$ of dimension $d_X$. In addition, we assign a one dimensional vector space $\mathbb{C}_E$ to each D5 brane $E$.

Consider the following vector spaces together with their $\C^*_{t_1} \times \C^*_{t_2}$-action. 

\begin{itemize}
\item For a D5 brane $E$ take
\[ 
\begin{multlined}
\mathbb{M}_E = \mathrm{Hom}(V_{E^+},V_{E^-}) \oplus t_1t_2 \mathrm{Hom}(V_{E^+},\CC_E)\oplus \mathrm{Hom}(\CC_E,V_{E^-}) \\ \oplus t_1t_2 \mathrm{End}(V_{E^-}) \oplus t_1t_2 \mathrm{End}(V_{E^+})
\end{multlined}
\]
with elements $(A_E, a_E, b_E, B_{E}, B'_{E})$.

\item For an NS5 brane $F$ let 
\[ 
\mathbb{M}_Z = t_1\mathrm{Hom}(V_{F^+},V_{F^-}) \oplus t_2\mathrm{Hom}(V_{F^-},V_{F^+}) 
\]
with elements $(C_F, D_F)$.
\item For a D5 brane $E$ let
$ \mathbb{N}_E = t_1t_2\mathrm{Hom}(V_{E^+},V_{E^-})$.
\item For a D3 brane $X$ let $\mathbb{N}_X = t_1t_2\,\mathrm{End}(V_X)$.
\end{itemize}
Consider the sums
\[ \mathbb{M}=\bigoplus_{E\; \mathrm{D5}} \mathbb{M}_E \oplus \bigoplus_{F\; \mathrm{NS5}} \mathbb{M}_F,
\qquad\qquad
\mathbb{N}=\bigoplus_{E\; \mathrm{D5}} \mathbb{N}_E \oplus \bigoplus_{X\; \mathrm{D3}} \mathbb{N}_X
\]
and define the moment map $\mu: \mathbb{M} \to \mathbb{N}$ as follows.
\begin{itemize}
\item The $\mathbb{N}_E$ component for each D5 brane $E$ of $\mu$ is 
\[ B_{E^-}A_E-A_EB'_{E^+}+a_Eb_E.\]
\item The $\mathbb{N}_X$ component of $\mu$ for each D3 brane $X$ depends on the slopes of the 5-branes on the boundaries of $X$ as follows:
\[
\begin{cases}
B_{X^{-}}-B_{X^{+}} & \quad \textrm{for \textbackslash -\textbackslash  } \\
C_{X^{+}}D_{X^{+}}-D_{X^{-}}C_{X^{-}} & \quad \textrm{for  /-/} \\
-D_{X^{-}}C_{X^{-}}-B_{X^{+}} & \quad \textrm{for  /-\textbackslash}  \\
C_{X^{+}}D_{X^{+}}+B_{X^{-}} & \quad \textrm{for  \textbackslash -/}.
\end{cases}
\]
\end{itemize}
Let $\widetilde{\mathcal{M}}$ be the points of $\mu^{-1}(0)$ for which the stability conditions (S1), (S2) hold.
\begin{enumerate}
\item[(S1)] There is no nonzero subspace $0 \neq S \subset V_{E^{+}}$ with $B_E (S) \subset S$, $A_E(S)=0 = b_E(S)$.
\item[(S2)] There is no proper subspace $S \subsetneq V_{E^{-}}$ with $B_E (T) \subset T$, $\mathrm{Im}(A_E)+\mathrm{Im}(a_E) \subset T$.
\end{enumerate}
Let $\mathcal{G}=\prod_{E \; \mathrm{D3}} GL(V_X)$ act on $\widetilde{\mathcal{M}} \times \mathbb{C}$ by $g.(m,z)=(gm,\chi(g)^{-1}z)$ utilizing the character
\[ \chi: \mathcal{G} \to \mathbb{C}^{\ast}, \quad (g_{X})_X \mapsto \prod_{X}\det(g_X).\]
A point $m \in \widetilde{\mathcal{M}}$ is called \emph{stable} if the orbit $\mathcal{G}.(m,z)$ is closed and the stabilizer of $(m,z)$ is finite for $z \neq 0$. The bow variety associated with the above data is defined as the quotient \[\mathcal{M} \coloneqq\widetilde{\mathcal{M}}^s/\mathcal{G}\] of the set of stable points of $\widetilde{\mathcal{M}}$ with $\mathcal{G}$.

\begin{theorem}[{\cite{nakajima2017cherkis}}] For a generic choice of stability condition the variety $\mathcal{M}$ is smooth and has a holomorphic symplectic structure. Bow varieties associated with Hanany-Witten equivalent diagrams are isomorphic. 
\end{theorem}

In this paper we consider bow varieties almost exclusively with generic stability condition.

\begin{example}
    For $n=m=1$ the associated bow variety (if not empty) is the Hilbert scheme of certain number of points on the plane. For example for $d_0=11, d_1=14$, or for $d_0=d_1=8$ the bow variety is $(\C^2)^{[8]}$, cf. Example \ref{ex:hilb2}
\end{example}

\begin{example}
    The bow variety of the example in Section \ref{sec:branediagrams} is a smooth  holomorphic symplectic variety of dimension 10. It has a $(\C^*)^3\times \C^*_{t_1}\times \C^*_{t_2}$ torus action (see Section \ref{sec:torusaction} below) with 13 fixed points. 
\end{example}

\subsection{Brane diagrams in standard form}
\label{subsec:brane_diags_standard_form}
By a sequence of Hanany-Witten transitions we may rearrange the brane diagram to the following {\em standard form}: on the top arc of the circle the NS5 branes are on the left, and the D5 branes are on the right. 
For such a brane diagram let us call the 5-branes, from left to right
\[
F_1, F_{2}, \ldots, F_{m-1}, F_m, E_1, E_2, \ldots, E_{n-1}, E_n.
\]
and define the following integers (`local brane charges'):
\[
e_i=d_{E_i^+} - d_{E_i^-}, \qquad\qquad
f_j=d_{F_j^-} - d_{F_j^+},
\]
and denote the multiplicity of the D3 brane in between $F_m$ and $E_1$ by $d_0$, or simply by $d$.

Clearly we have $\sum_i e_i=\sum_j f_j$, and the vectors $e,f$ together with the number $d$ determine all multiplicities. We will call this brane diagram ``$(d,e,f)$ standard diagram'', and the associated bow variety we denote by $\mathcal{M}(d,e,f)$.

\medskip
\noindent We introduce two types of moves on triples $(d,e,f)$:

[move-1] 
    $
    (d,e,f)\mapsto (
    (d+e_1,(e_2, e_3, \ldots, e_n, e_1+m),
    (f_1+1, f_2+1, \ldots, f_m+1)),
    $

[move-2] 
    $
    (d,e,f)\mapsto (d+f_m,
    (e_1+1, e_2+1, \ldots, e_n+1),
    (f_m+n,f_1, f_2, \ldots, f_{m-1})).
    $

\begin{lemma} Any fixed Hanany-Witten equivalence class of brane diagrams contains infinitely many standard diagrams. The corresponding triples $(d,e,f)$ can be obtained from each other by a sequence of [move-1], [move-2] and their inverses.
\end{lemma}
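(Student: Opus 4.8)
The plan is to realize both moves as explicit sequences of Hanany--Witten (HW) transitions followed by a harmless relabeling of the circle, and then to show that the resulting operations act transitively on the standard diagrams of a fixed HW-equivalence class.

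First I would interpret [move-1] geometrically as sliding the leftmost D5 brane once around the circle. In a $(d,e,f)$ standard diagram $E_1$ sits immediately to the right of $F_m$, so the local word there is $F_mE_1$, which is the right-hand side of the HW picture. Sliding $E_1$ leftward past an adjacent NS5 brane $F_j$ turns $F_jE_1$ into $E_1F_j$; writing the three surrounding D3 dimensions as $d_1,d_2,d_3$ and using $d_2+d'_2=d_1+d_3+1$, a one-line check gives $e_1\mapsto e_1+1$ and $f_j\mapsto f_j+1$. Carrying $E_1$ past all of $F_m,F_{m-1},\dots,F_1$ therefore raises its charge by $m$ and each $f_j$ by $1$, after which $E_1$ lies at the wrap-around between $E_n$ and $F_1$. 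Relabeling the resulting diagram so the reading restarts at $F_1$ turns the D5 sequence into $(E_2,\dots,E_n,E_1)$, makes the D3 brane formerly just right of $E_1$ (of dimension $d+e_1$) the new basepoint, and reproduces exactly the triple of [move-1]. The same argument with the roles of D5$/$NS5 and $n/m$ interchanged---sliding the rightmost NS5 brane $F_m$ rightward past all $n$ D5 branes---yields [move-2]. Since each move is a composition of HW transitions together with a relabeling that leaves the decorated circle unchanged, the moves and their inverses (the reverse slides) preserve the HW class and carry standard diagrams to standard diagrams.

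This already settles the first assertion: iterating [move-1] increases $\sum_j f_j$ by $m$ at each step, so it produces pairwise distinct triples inside one HW class, hence infinitely many standard diagrams. For the second assertion I would first note that \emph{every} diagram is HW-equivalent to a standard one: cut the circle anywhere and bubble-sort the resulting linear word, each swap of an adjacent $EF$ pair into $FE$ being a single HW transition, a process that terminates once all NS5 branes precede all D5 branes. It then remains to show that any two standard diagrams of one HW class lie in a single orbit of the moves. The convenient bookkeeping comes from the universal cover of the circle: encode the D5 charges as the bi-infinite sequence $\tilde e$ with $\tilde e_{i+n}=\tilde e_i+m$ and the NS5 charges as $\tilde f$ with $\tilde f_{j+m}=\tilde f_j+n$, so a standard diagram is a choice of length-$n$ and length-$m$ windows into $\tilde e,\tilde f$. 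In these terms [move-1] shifts the $e$-window by one (and raises $\tilde f$ uniformly by $1$), [move-2] shifts the $f$-window, and one checks the compatibility relation $(\text{move-1})^{\,n}=(\text{move-2})^{\,m}$, corresponding to sliding every D5 brane (equivalently every NS5 brane) once around the circle.

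The heart of the argument---and the step I expect to be the main obstacle---is proving that the moves generate \emph{all} transitions between standard diagrams, i.e. that an arbitrary HW path beginning and ending at a sorted cyclic word factors into single-brane full loops. The key local principle is that a sorted cyclic word admits HW transitions only at its two $F/E$ boundaries, and that starting to slide a brane inward raises the number of $F$-$E$ adjacencies around the circle above its sorted value $2$; this excess can be removed only by driving that brane all the way around (a move) or by reversing it. Using that HW transitions supported on disjoint arcs commute, one serializes a general path into complete loops together with cancelling partial loops, so that the net effect is a product of moves and their inverses. Making this factorization rigorous---handling several branes in motion simultaneously and verifying that the adjacency count indeed forces each started slide to complete---is the technical core; once it is in place, transitivity of the moves on standard diagrams follows, finishing the proof.
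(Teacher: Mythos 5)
Your realization of the two moves coincides with the paper's proof: the paper likewise obtains [move-1] by driving $E_1$ leftward through all $m$ NS5 branes via Hanany--Witten transitions and then sliding it along the bottom arc to the right of $E_n$, and obtains [move-2] symmetrically by moving $F_m$ through the D5 branes. Your bookkeeping of the charge changes ($e_1\mapsto e_1+1$ and $f_j\mapsto f_j+1$ per transition, hence $e_1\mapsto e_1+m$ and $f\mapsto f+(1,\dots,1)$ after a full pass, with the new basepoint dimension $d+e_1$) is correct, and your remark that iterating [move-1] changes $\sum_j f_j$ monotonically immediately gives the infinitude claim, which the paper leaves implicit. So for everything the paper actually argues, your proof is correct and essentially identical.

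Where you diverge is in taking the second sentence of the lemma seriously as a transitivity statement and trying to prove that the moves \emph{generate} all passages between standard diagrams in a fixed HW class. The paper does not supply this step at all, so you are not missing anything the paper has; but your own argument for it is explicitly left open at its crux, namely the factorization of an arbitrary HW path between sorted cyclic words into complete single-brane loops via the $F$--$E$ adjacency count. As stated this is only a heuristic: the adjacency count does not remember which brane created an excess adjacency, and with several branes in motion the claim that ``each started slide must complete or reverse'' needs a genuine argument. If you want to close this, a cleaner route than path factorization is to use the standard HW-invariants (the ``absolute'' charges, e.g.\ $e_i$ corrected by the number of NS5 branes on a fixed side of $E_i$, and dually for $f_j$, together with one dimension): these determine the HW class, two standard diagrams are equivalent iff their lifted bi-infinite charge sequences $\tilde e,\tilde f$ agree up to the window shifts you describe, and the moves visibly act transitively on window placements. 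That replaces the delicate dynamical argument with an invariant-theoretic one and would make your proof complete at a level of rigor exceeding the paper's.
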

\begin{proof} Take $E_1$ and move it through consecutively the NS5 branes with HW transformations. Eventually, it will arrive to the left of $F_1$, so it can be moved on the bottom arc to the right of $E_n$. This yields [move-1]. When moving $F_m$ consecutively through the D5 branes we get [move-2].
\end{proof}

\begin{example}
    The two brane diagrams at the end of Section \ref{sec:branediagrams} are in standard form, and they are Hanany-Witten equivalent. 
\end{example}

\begin{example} \label{ex:hilb2}
    The $m=n=1$ case is already interesting. Using the moves above (that is, Hanany-Witten transitions), the general standard form $(d, (k), (k))$ can be brought to the form $(d',(0),(0))$. The bow variety associated to the latter is $(\C^2)^{[d']}$, the Hilbert scheme of $d'$ points on $\C^2$. Easy calculation gives $d'=d-k(k-1)/2$.
\end{example}

\section{Torus action}
\label{sec:torusaction}

In Section \ref{subsec:bowvarieties} we indicated the action of 
the torus $(\C^*)^2=\C^*_{t_1} \times \C^*_{t_2}$  on $\mathbb M$ and $\mathbb N$.
It is straightforward to verify that the map $\mu$ is equivariant, and hence we obtain a $\C^*_{t_1} \times \C^*_{t_2}$-action on the bow variety. 
Moreover, the group $(\C^*)^n$ of linear reparametrizations of the lines $\C_E$ also acts on the bow variety, and in turn, we have an action of 
\[
T=(\C^*_{t_1} \times \C^*_{t_2}) \times (\C^*)^{\text{D5 branes}}
=(\C^*)^{2} \times (\C^*)^{n}
=(\C^*)^{n+2}.
\]

\subsection{Tangent space}
Consider the three term complex
\[ \bigoplus_{X\; \mathrm{D3}} \mathrm{End}(V_X) \xrightarrow{\sigma}\mathbb{M}\xrightarrow{\tau=\begin{pmatrix} \tau_1 \\ \tau_2 \end{pmatrix}} \mathbb{N}, \]
\noindent where
\[ \sigma:\bigoplus_{X\; \mathrm{D3}} v_X \mapsto \begin{pmatrix} [v_{X^{\pm}},B_{X^{\pm}}] \\ v_{X^{+}}A-Av_{X^{-}} \\ v_{X^{+}}a \\ -bv_{X^{-}} \\ v_{F^{+}}C-Cv_{F^{-}} \\ v_{F^{-}}D-Dv_{F^{+}} \end{pmatrix}, \]
\[
\tau_1: \begin{pmatrix} \overline{B} \\ \overline{A} \\ \overline{a} \\ \overline{b} \\ \overline{C} \\ \overline{D} \end{pmatrix} \mapsto B_{{X^+}}\overline{A}-\overline{A}B_{{X^-}}+\overline{B}_{{X^+}}A-A\overline{B}_{{X^-}}+\overline{a}b+a\overline{b},
\]
\noindent
and $\tau_2$ is
\[ -\overline{B}_{{X^+}}+\overline{B}'_{{X^-}}, \quad \overline{C}_{X^+}D_{X^+}+C_{X^+}\overline{D}_{X^+}-\overline{D}_{X^-}C_{X^-}-D_{X^-}\overline{C}_{X^-},\]
\[\overline{C}_{X^+}D_{X^+}+C_{X^+}\overline{D}_{X^+}+\overline{B}_{{X^-}},\quad -\overline{B}_{{X^+}}-\overline{D}_{X^-}C_{X^-}-D_{X^-}\overline{C}_{X^-}.\]
\begin{lemma}
\label{lem:complex_is_equivariant}
\begin{enumerate}
\item The above complex is $T$ equivariant.
\item The tangent space of $\mathcal{M}$ at a fixed point represented by $(A,B,a,b,C,D)$ is $\mathrm{Ker} \,\tau / \mathrm{Im}\, \sigma$.
\end{enumerate}
\end{lemma}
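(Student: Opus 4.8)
The plan is to prove the two assertions in turn, treating (1) as routine bookkeeping and reserving the real work for (2), where the identification of the tangent space as the middle cohomology of the three-term complex is the substantive content.

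For part (1), I would verify $T$-equivariance directly by tracking weights. The maps $\sigma$ and $\tau$ are assembled from the same bilinear building blocks that appear in the moment map $\mu$ and the $\mathcal{G}$-action, and the vector spaces $\mathbb{M}$ and $\mathbb{N}$ carry their $\C^*_{t_1}\times\C^*_{t_2}$-weights by the very definitions in Section~\ref{subsec:bowvarieties} (each $\Hom$ summand is tagged with its monomial in $t_1,t_2$). The remaining $(\C^*)^n$-factor rescales the lines $\C_E$, hence acts with compatible weights on the summands $\Hom(V_{E^+},\C_E)$ and $\Hom(\C_E,V_{E^-})$. Since each entry of $\sigma$ and of $\tau_1,\tau_2$ is a fixed sum of such bilinear expressions, one checks that applying $\sigma$ or $\tau$ commutes with the torus action summand-by-summand; this is a finite check with no conceptual obstacle.

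For part (2), the strategy is the standard deformation-theoretic computation for symplectic/GIT quotients, specialised to the bow variety construction. The composite $\tau\circ\sigma$ vanishes because $\sigma$ is the infinitesimal action of $\mathcal{G}=\prod_X \GL(V_X)$ and $\tau$ is the differential $d\mu$ of the moment map at the point $(A,B,a,b,C,D)$, and $\mu$ is $\mathcal{G}$-invariant (equivalently, the image of the Lie-algebra action lies in $\ker d\mu$). Thus we genuinely have a complex. The tangent space to the quotient $\mathcal{M}=\widetilde{\mathcal{M}}^s/\mathcal{G}$ at a stable point is computed as follows: the tangent space to the affine variety $\mu^{-1}(0)$ is $\ker\tau=\ker d\mu$ inside $\mathbb{M}$; passing to the quotient by $\mathcal{G}$ removes the tangent directions coming from the group orbit, namely $\im\sigma$; and the stability condition guarantees that $\mathcal{G}$ acts with finite stabiliser so that $\sigma$ is injective and the quotient is smooth of the expected dimension near $x$. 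Hence $T_x\mathcal{M}=\ker\tau/\im\sigma$ as claimed, and because the complex is $T$-equivariant by part (1), this identification respects the $T$-action, giving the equivariant K-theory class.

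The main obstacle is the final identification step: one must justify that the naive quotient $\ker\tau/\im\sigma$ really computes the Zariski tangent space of the GIT quotient, rather than merely a space of the correct dimension. I expect to argue this by invoking smoothness of $\mathcal{M}$ at $x$ (guaranteed for generic stability by the cited theorem of \cite{nakajima2017cherkis}), which forces $\mu^{-1}(0)$ to be cut out transversally near the stable locus so that $d\mu=\tau$ is surjective onto $\mathbb{N}$; combined with injectivity of $\sigma$ from finiteness of stabilisers, the complex has cohomology only in the middle term, and a dimension count then matches $\dim\ker\tau-\dim\im\sigma$ with $\dim\mathcal{M}$. The remaining care is purely notational: the matrix entries of $\tau_2$ listed above are case-dependent on the slopes of the bounding 5-branes (the four cases \textbackslash-\textbackslash, /-/, /-\textbackslash, \textbackslash-/), so I would confirm that the displayed formulas for $\tau_1,\tau_2$ are precisely the linearisations of the corresponding components of $\mu$ from Section~\ref{subsec:bowvarieties}, which is again a direct term-by-term differentiation.
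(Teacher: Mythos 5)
Your proposal is correct and follows the same route as the paper, which disposes of part (1) as a straightforward weight check and, for part (2), simply cites \cite[Section~2.5]{nakajima2017cherkis}; your argument (identifying $\sigma$ with the infinitesimal $\mathcal{G}$-action, $\tau$ with $d\mu$, and using transversality plus freeness at stable points) is essentially the content of that citation written out. The one point to phrase carefully is the injectivity of $\sigma$: finiteness of the stabiliser of $(m,z)$ only controls the stabiliser inside $\ker\chi$, which has codimension one in $\mathcal{G}$, so one should instead invoke the freeness of the $\mathcal{G}$-action on the stable locus deduced from (S1)--(S2) in the cited reference.
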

\begin{proof}
Part (1) is a straightforward verification. Part (2) is \cite[Section~2.5]{nakajima2017cherkis}.
\end{proof}

\subsection{Torus fixed points}
\label{subsec:torus_fixed_points}
The action of $T$ has isolated fixed points in $\mathcal{M}$. These fixed points $\mathcal{M}^T$ can be described combinatorially in various ways. One description is via \emph{(generalized) Maya diagrams} \cite[Appendix~A]{nakajima2018towards} (cf. \cite[Appendix]{rimanyi2020bow})  that we recall now.

By \cite[Section~6.9.2]{nakajima2017cherkis} there exists a set of coordinates $s_1,\dots,s_{n}$ on $(\mathbb{C}^{\ast})^n \subset T$ such that the action on $\CC_{i}$ 
is induced by 
$ \u_i \coloneqq s_1\dots s_{i}$ for $1 \leq i \leq n$.
Consider a representative $(A,B,a,b,C,D)$ of a point in $\mathcal{M}^{T}$.
Then there must exist a homomorphism $\rho:T \to \mathcal{G}$ such that
\[ s_1\dots s_{n}A_1=\rho_{E^{-}}(t)^{-1} A_1 \rho_{E^{+}}(t) \]
\[ A_E=\rho_{E^{-}}(t)^{-1} A_E \rho_{E^{+}}(t),\quad E \neq E_1 \]
\[ t_1t_2B_X=\rho_{X}(t)^{-1}B_X\rho_{X}(t) \]
\[ a_E(s_1\dots s_{i})^{-1}=\rho_{E^{-}}(t)^{-1}a_E \]
\[ t_1t_2(s_1\dots s_{n})b_1=b_1\rho_{E_1^{-}}(t) \]
\[ t_1t_2(s_1\dots s_{i})b_E=b_E\rho_{E^{-}}(t),\quad E \neq E_1 \]
\[ t_1C_F=\rho_{F^{-}}(t)^{-1} C_F \rho_{F^{+}}(t) \]
\[ t_2D_F=\rho_{F^{+}}(t)^{-1} D_F \rho_{F^{-}}(t) \]
The space 
\[V=\bigoplus_{X\; \mathrm{D3}} V_X\] decomposes into weight spaces with respect to $\rho$:
\[ V=\bigoplus V(t_1,t_2,s_1,\dots,s_{n}).\]
By the above, $A_E$, $E\neq E_1$ preserves the $(t,s)$-weight, $A_1$ shifts the $s$-weights by $(-1,\dots,-1)$, $B_X$ preserves the $s$-weight and decreases the $t$-weight by $(1,1)$, $a_i$ sends $\mathbb{C}_i$ to the weight space $s_1\dots s_{i}$ (every other weight is 0), and $b_i$ is zero on every weight space other than $t_1t_2(s_1\cdots s_{i})$.

For $1 \leq i \leq n$, set
\[ V^i = \bigoplus V^i(t_1,t_2) \]
where
\[ V^i(t_1,t_2)= \oplus_{k \in \ZZ} V(t_1,t_2,s_1^{1+k},\dots,s_{i}^{1+k},s_{i+1}^{k},\dots,s_{n}^{k} ).\]
Then $V^i$ is a $T^2$-module and 
\[V=\oplus_{1 \leq i \leq n} V^i.\] By the above, $\mathbb{C}_{i}$ can only interact with $V^i$, and hence the six-tuple $(A,B,a,b,C,D)$ also decomposes into a direct sum. 

Let us restrict our attention on a single summand $V^i$. As $a_{j}$, $b_j$ are zero for $j\neq i$, $A_j$ is an isomorphism due to stability conditions (S1)--(S2). Therefore, we can identify $V_{j^{-}}$ with $V_{j^{+}}$, and the datum can be represented by a bow diagram with only one D5 brane. Instead of drawing this diagram on the cycle, we draw it on the universal cover, that is, a periodic brane on the infinte line. The D5 branes (all mapping onto $E_i$ under the exponential map) appear at, let's say, integer positions. The NS5 branes come in groups; these are called \emph{blocks} and their positions are indexed by half-integers. This bow diagram and its block decomposition is compatible with the weight decomposition of the underlying vector spaces, because the map $A_1$ shifts the $s$-weights by $(-1,\dots,-1)$, while $A_i$, $i \neq 1$ preserves the weights.


\subsection{Combinatorial codes of torus fixed points---Maya diagrams}
\label{sec:maya}

We associate a 01 sequence to each D5 brane $E_i$.  
It follows from the stability conditions that (when restricted to $V^i$) the maps $D_j$ that appear in the NS5 branes left (resp. right) of $E_i$ are all injective (resp. surjective) and the dimension of the underlying spaces at consecutive $D3$ branes is either constant or increases (resp. decreases) by 1.
Take the 01 sequence where for negative (resp. positive) blocks 1 (resp. 0) indicates the places where the dimension increases (resp. decreases) and 0 (resp. 1) occurring at the places where the dimension is constant. Although $V^i$ is finite dimensional, extend the sequence obtained this way by 0's on the left and by 1's on the right to get a sequence $M_i$ infinite in both directions.


The {\em (generalized) Maya diagram} of the fixed point is obtained by placing the 01 sequence $M_i$ for each $1 \leq i \leq n$ under each other. These Maya diagrams hence comprise sequences of 01 matrices of size $n \times m$, again numbered with half integers. 


\begin{example}\label{ex:2Maya}
The two Maya diagrams 
\[
\begin{tikzpicture}
\draw (-2.2,-.2) -- (3.4,-0.2);\draw (-2.2,1) -- (3.4,1);
\draw (-.2,-0.5) -- (-.2,1.3);
\draw (0.2,1.4) node {$\frac{1}{2}$};
\draw (1,1.4) node {$\frac{3}{2}$};
\draw (1.8,1.4) node {$\frac{5}{2}$};
\draw (2.6,1.4) node {$\frac{7}{2}$};
\draw (-.6,1.4) node {$\frac{-1}{2}$};
\draw (-1.4,1.4) node {$\frac{-3}{2}$};
 \draw (0,0) node {$1$};
 \draw (.4,0) node {$0$};
 \draw (0,.4) node {$0$};
 \draw (.4,.4) node {$1$};
 \draw (0,.8) node {$1$};
 \draw (.4,.8) node {$0$};
 \draw (-.2,-.2)--(0.6,-0.2) -- (0.6,1) -- (-0.2,1) -- (-.2,-.2);
 \draw (0.8,0) node {$1$};
 \draw (1.2,0) node {$0$};
 \draw (0.8,.4) node {$0$};
 \draw (1.2,.4) node {$1$};
 \draw (0.8,.8) node {$0$};
 \draw (1.2,.8) node {$1$};
 \draw (.6,-0.2)--(1.4,-0.2) -- (1.4,1) -- (0.6,1) -- (0.6,-.2);
 \draw (1.6,0) node {$1$};
 \draw (2,0) node {$1$};
 \draw (1.6,.4) node {$1$};
 \draw (2,.4) node {$0$};
 \draw (1.6,.8) node {$1$};
 \draw (2,.8) node {$1$};
 \draw (1.4,-0.2)--(2.2,-0.2) -- (2.2,1) -- (1.4,1) -- (1.4,-.2);
 \draw (2.4,0) node {$1$};
 \draw (2.8,0) node {$1$};
 \draw (2.4,.4) node {$1$};
 \draw (2.8,.4) node {$1$};
 \draw (2.4,.8) node {$1$};
 \draw (2.8,.8) node {$1$};
 \draw (2.2,-0.2)--(3,-0.2) -- (3,1) -- (2.2,1) -- (2.2,-.2);
 \draw (-.8,0) node {$1$};
 \draw (-.4,0) node {$0$};
 \draw (-0.8,.4) node {$0$};
 \draw (-.4,.4) node {$0$};
 \draw (-0.8,.8) node {$0$};
 \draw (-.4,.8) node {$0$};
 \draw (-1,-.2)--(-0.2,-0.2) -- (-0.2,1) -- (-1,1) -- (-1,-.2);
 \draw (-1.6,0) node {$0$};
 \draw (-1.2,0) node {$0$};
 \draw (-1.6,.4) node {$0$};
 \draw (-1.2,.4) node {$0$};
 \draw (-1.6,.8) node {$0$};
 \draw (-1.2,.8) node {$0$};
 \draw (-1.8,-.2)--(-1,-0.2) -- (-1,1) -- (-1.8,1) -- (-1.8,-.2);
\draw (3.4,.5) node {$\forall 1$};
\draw (-2.2,.5) node {$\forall 0$};
\draw ( 5,0) node {$E_3$};
\draw ( 5,.4) node {$E_2$};
\draw ( 5,.8) node {$E_1$};
\draw (0,-0.6) node {$F_1$};
\draw (0.4,-0.6) node {$F_2$};
\draw (0.8,-0.6) node {$F_1$};
\draw (1.2,-0.6) node {$F_2$};
\draw (-.8,-0.6) node {$F_1$};
\draw (-0.4,-0.6) node {$F_2$};
\draw (1.8,-0.6) node {$\cdots$};
\draw (-1.4,-0.6) node {$\cdots$};
\end{tikzpicture}
\]
\[
\begin{tikzpicture}
\draw (-2.2,-.2) -- (3.4,-0.2);\draw (-2.2,1) -- (3.4,1);
\draw (-.2,-0.5) -- (-.2,1.3);
\draw (0.2,1.4) node {$\frac{1}{2}$};
\draw (1,1.4) node {$\frac{3}{2}$};
\draw (1.8,1.4) node {$\frac{5}{2}$};
\draw (2.6,1.4) node {$\frac{7}{2}$};
\draw (-.6,1.4) node {$\frac{-1}{2}$};
\draw (-1.4,1.4) node {$\frac{-3}{2}$};
 \draw (0,0) node {$0$};
 \draw (.4,0) node {$0$};
 \draw (0,.4) node {$0$};
 \draw (.4,.4) node {$0$};
 \draw (0,.8) node {$1$};
 \draw (.4,.8) node {$0$};
 \draw (-.2,-.2)--(0.6,-0.2) -- (0.6,1) -- (-0.2,1) -- (-.2,-.2);
 \draw (0.8,0) node {$1$};
 \draw (1.2,0) node {$1$};
 \draw (0.8,.4) node {$1$};
 \draw (1.2,.4) node {$0$};
 \draw (0.8,.8) node {$0$};
 \draw (1.2,.8) node {$0$};
 \draw (.6,-0.2)--(1.4,-0.2) -- (1.4,1) -- (0.6,1) -- (0.6,-.2);
 \draw (1.6,0) node {$1$};
 \draw (2,0) node {$1$};
 \draw (1.6,.4) node {$1$};
 \draw (2,.4) node {$1$};
 \draw (1.6,.8) node {$1$};
 \draw (2,.8) node {$1$};
 \draw (1.4,-0.2)--(2.2,-0.2) -- (2.2,1) -- (1.4,1) -- (1.4,-.2);
 \draw (2.4,0) node {$1$};
 \draw (2.8,0) node {$1$};
 \draw (2.4,.4) node {$1$};
 \draw (2.8,.4) node {$1$};
 \draw (2.4,.8) node {$1$};
 \draw (2.8,.8) node {$1$};
 \draw (2.2,-0.2)--(3,-0.2) -- (3,1) -- (2.2,1) -- (2.2,-.2);
 \draw (-.8,0) node {$0$};
 \draw (-.4,0) node {$1$};
 \draw (-0.8,.4) node {$0$};
 \draw (-.4,.4) node {$0$};
 \draw (-0.8,.8) node {$0$};
 \draw (-.4,.8) node {$0$};
 \draw (-1,-.2)--(-0.2,-0.2) -- (-0.2,1) -- (-1,1) -- (-1,-.2);
 \draw (-1.6,0) node {$0$};
 \draw (-1.2,0) node {$0$};
 \draw (-1.6,.4) node {$0$};
 \draw (-1.2,.4) node {$0$};
 \draw (-1.6,.8) node {$1$};
 \draw (-1.2,.8) node {$0$};
 \draw (-1.8,-.2)--(-1,-0.2) -- (-1,1) -- (-1.8,1) -- (-1.8,-.2);
\draw (3.4,.5) node {$\forall 1$};
\draw (-2.2,.5) node {$\forall 0$};
\draw ( 5,0) node {$E_3$};
\draw ( 5,.4) node {$E_2$};
\draw ( 5,.8) node {$E_1$};
\end{tikzpicture}
\]
encode two of the 1806 torus fixed points on the 36-dimensional bow variety 
with $d=6$, $e=(2,3,1)$, $f=(2,4)$.
\end{example}

\medskip

The collection of Maya diagram for  $\mathcal{M}(d,e,f)$ is the ones satisfying the following properties (see \cite[Appendix~A]{rimanyi2020bow}).
\begin{enumerate}
    \item[(i)] For large enough $k$ all entries of block $\frac{k}{2}$ are 1, and all entries of block $\frac{-k}{2}$ are 0.
    \item [(ii)] For $1 \leq i \leq n$ the entries satisfy
    \[ e_i = 
    \#\{ \textrm{0s in row }i\textrm{ of positive blocks} \}
    -
    \#\{ \textrm{1s in row }i\textrm{ of negative blocks}\}
    . \]
    \item[(iii)] For $1 \leq j \leq m$ the entries satisfy
    \begin{multline*} 
    \ \hskip 0.8 true cm
    f_j = 
    \#\{ \textrm{0s in column }F_j\textrm{ of positive blocks} \}
    \\ -
    \#\{ \textrm{1s in column }F_j\textrm{ of negative blocks} \}. \hskip 0.8 true cm \ 
    \end{multline*}
    \item[(iv)]
    \[
    \begin{aligned}
    d = &\#\{ \textrm{1s in block }\frac{-1}{2}\}+2\#\{ \textrm{1s in block }\frac{-3}{2}\}+3\#\{ \textrm{1s in block }\frac{-5}{2}\}+\dots \\
    & + \#\{ \textrm{0s in block }\frac{3}{2}\}+2\#\{ \textrm{0s in block }\frac{5}{2}\}+3\#\{ \textrm{0s in block }\frac{7}{2}\}+\dots.
    \end{aligned} 
    \]
\end{enumerate}

For a pair of integer vectors $(e,f)\in \ZZ^n \times \ZZ^m$ and a nonnegative integer $d\in \ZZ_{\geq 0}^+$ let $M(d,e,f)$ be the set of Maya diagrams associated with fixed points on the bow variety $\mathcal{M}(d,e,f)$. Define
\[ 
M(e,f) = \bigsqcup_{d \geq 0} M(d,e,f).
\]

\subsection{Combinatorial codes of torus fixed points---tie diagrams}
There is another combinatorial code for torus fixed points, besides Maya diagrams: tie diagrams. They are closer to the physical origin of brane configurations. In tie diagrams D3 branes are realized by a number of \emph{ties}, on each interval between 5-branes as many ties as the D3 multiplicity is there. A tie connects an NS5 brane to a D5 brane and goes along the circle of our diagram certain number of times. A tie diagram can have 0 or 1 tie for each triple (NS5 brane, D5 brane, winding number). In Figure \ref{fig:tiediagrams} we illustrate a tie diagram of a torus fixed point, whose Maya diagram is
\[
\begin{tikzpicture}
\draw (-2.2,-.2) -- (1.8,-0.2);
\draw (-2.2,1) -- (1.8,1);
\draw (-.2,-0.5) -- (-.2,1.3);
\draw (0.2,1.4) node {$\frac{1}{2}$};
\draw (1,1.4) node {$\frac{3}{2}$};
\draw (-.6,1.4) node {$\frac{-1}{2}$};
\draw (-1.4,1.4) node {$\frac{-3}{2}$};
 \draw (0,0) node {$1$};
 \draw (.4,0) node {$0$};
 \draw (0,.4) node {$1$};
 \draw (.4,.4) node {$1$};
 \draw (0,.8) node {$1$};
 \draw (.4,.8) node {$1$};
 \draw (-.2,-.2)--(0.6,-0.2) -- (0.6,1) -- (-0.2,1) -- (-.2,-.2);
 \draw (0.8,0) node {$1$};
 \draw (1.2,0) node {$0$};
 \draw (0.8,.4) node {$1$};
 \draw (1.2,.4) node {$1$};
 \draw (0.8,.8) node {$1$};
 \draw (1.2,.8) node {$1$};
 \draw (.6,-0.2)--(1.4,-0.2) -- (1.4,1) -- (0.6,1) -- (0.6,-.2);
 \draw (-.8,0) node {$0$};
 \draw (-.4,0) node {$0$};
 \draw (-0.8,.4) node {$1$};
 \draw (-.4,.4) node {$0$};
 \draw (-0.8,.8) node {$0$};
 \draw (-.4,.8) node {$1$};
 \draw (-1,-.2)--(-0.2,-0.2) -- (-0.2,1) -- (-1,1) -- (-1,-.2);
 \draw (-1.6,0) node {$0$};
 \draw (-1.2,0) node {$0$};
 \draw (-1.6,.4) node {$0$};
 \draw (-1.2,.4) node {$1$};
 \draw (-1.6,.8) node {$0$};
 \draw (-1.2,.8) node {$0$};
 \draw (-1.8,-.2)--(-1,-0.2) -- (-1,1) -- (-1.8,1) -- (-1.8,-.2);
\draw (1.8,.5) node {$\forall 1$};
\draw (-2.2,.5) node {$\forall 0$};
\draw ( 3,0) node {$E_3$};
\draw ( 3,.4) node {$E_2$};
\draw ( 3,.8) node {$E_1$};
\draw (0,-0.6) node {$F_1$};
\draw (0.4,-0.6) node {$F_2$};
\end{tikzpicture}
\]

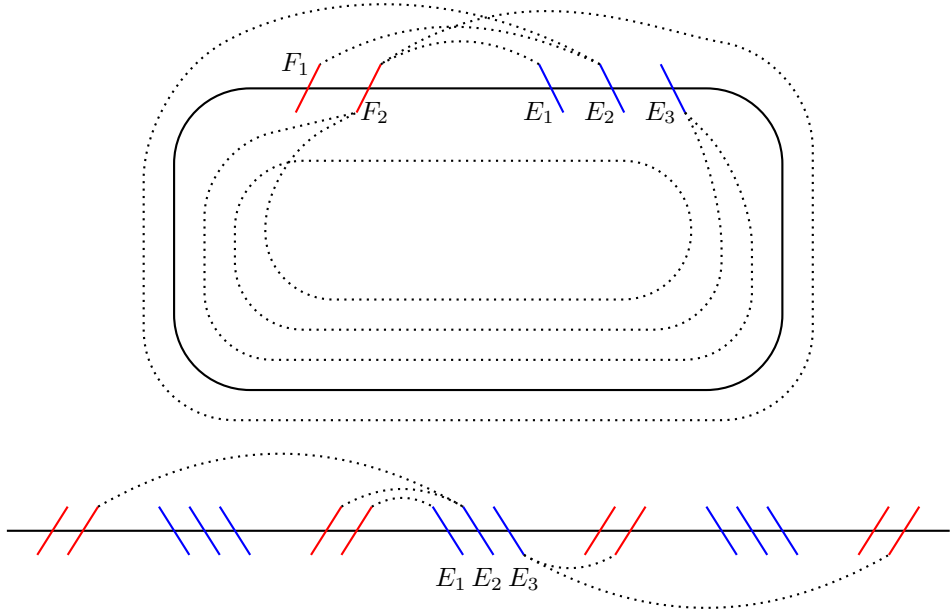
\begin{figure}
\[
\begin{tikzpicture}[scale=0.8]
    \draw[rounded corners=10mm, thick] (0,5) -- (10,5) -- (10,10) --(0,10) -- cycle;
    \draw[thick,red] (2,9.6)--(2.4,10.4);
    \draw[thick,red] (3,9.6)--(3.4,10.4);
    \draw[thick,blue] (6.4,9.6)--(6,10.4);
    \draw[thick,blue] (7.4,9.6)--(7,10.4);
    \draw[thick,blue] (8.4,9.6)--(8,10.4);
    
\draw[thick,dotted] (6,10.4) to [out=150,in=30] (3.4,10.4);
\draw[thick,dotted] (7,10.4) to [out=155,in=30] (2.4,10.4);

\draw[thick, dotted,rounded corners=12mm] (7,10.4) to [out=150,in=30] (-.5,10) -- (-.5,4.5)-- (10.5,4.5) -- (10.5,10.5) to [out=170,in=35] (3.4,10.4);

\draw[thick, dotted,rounded corners=9mm] (8.4,9.6) -- (9.5,8.5) -- (9.5,5.5) -- (0.5,5.5) -- (0.5,9) -- (3,9.6);
\draw[thick, dotted,rounded corners=9mm] (8.4,9.6) -- (9,8.5) -- (9,6) -- (1,6) -- (1,8.8) -- (8.5,8.8) -- (8.5,6.5) -- (1.5,6.5)  -- (1.5,8.8) --  (3,9.6);

\draw (2,10.4) node {$F_1$};
\draw (3.3,9.6) node {$F_2$};
\draw (6,9.6) node {$E_1$};
\draw (7,9.6) node {$E_2$};
\draw (8,9.6) node {$E_3$};
\end{tikzpicture}
\]
\[
\begin{tikzpicture}[scale=.8]
\draw[thick]  (0.5,0) -- (16,0);
        \draw[thick,red] (1,-.4)--(1.5,.4);
        \draw[thick,red] (1.5,-.4)--(2,.4);
    \draw[thick,blue] (3.5,-.4)--(3,.4);
    \draw[thick,blue] (4,-.4)--(3.5,.4);
    \draw[thick,blue] (4.5,-.4)--(4,.4);
        \draw[thick,red] (5.5,-.4)--(6,.4);
        \draw[thick,red] (6,-.4)--(6.5,.4);
    \draw[thick,blue] (8,-.4)--(7.5,.4);
    \draw[thick,blue] (8.5,-.4)--(8,.4);
    \draw[thick,blue] (9,-.4)--(8.5,.4);
        \draw[thick,red] (10,-.4)--(10.5,.4);
        \draw[thick,red] (10.5,-.4)--(11,.4);
    \draw[thick,blue] (12.5,-.4)--(12,.4);
    \draw[thick,blue] (13,-.4)--(12.5,.4);
    \draw[thick,blue] (13.5,-.4)--(13,.4);
        \draw[thick,red] (14.5,-.4)--(15,.4);
        \draw[thick,red] (15,-.4)--(15.5,.4);
\draw[thick,dotted] (2,0.4) to [out=30,in=150] (8,0.4);
\draw[thick,dotted] (6,0.4) to [out=30,in=150] (8,0.4);
\draw[thick,dotted] (6.5,0.4) to [out=30,in=150] (7.5,0.4);
\draw[thick,dotted] (9,-0.4) to [out=-30,in=-150] (10.5,-0.4);
\draw[thick,dotted] (9,-0.4) to [out=-30,in=-150] (15,-0.4);
\draw (7.8,-.8) node {$E_1$};
\draw (8.4,-.8) node {$E_2$};
\draw (9,-.8) node {$E_3$};
\end{tikzpicture}
\]
\caption{The tie diagram, and its universal cover, of one of the 41 fixed points of $\mathcal{M}(5,(-1,-2,2),(-1,0))$.}\label{fig:tiediagrams}
\end{figure}
The correspondence between tie diagrams and Maya diagrams is as follows. Consider the universal cover of the tie diagram with the choice that all ties are lifted to a representative corresponding to a distinguished `block' of D5 branes (see Figure \ref{fig:tiediagrams}). Then the possible ties from this group to groups of lifted NS5 branes come in blocks. These blocks correspond to the blocks of the Maya diagrams. For positive blocks (ties to the right) the rule is: {\em the $(E,F)$ entry is 0 if there is a $E$-$F$ tie, otherwise it is 1}; and for negative blocks (ties to the left) the rule is: {\em the $(E,F)$ entry is 1 if there is a $E$-$F$ tie, otherwise it is 0.} For more details see \cite[Appendix]{rimanyi2020bow}.

\section{Quivers, D5 swaps}
\label{sec:quivers_D5swaps}

\subsection{Co-balanced brane diagrams} 
\label{subsec:balanced_brane_diagrams}
In this subsection we follow the arguments of \cite[Sect. 3.3]{witten}.
If $d_{E^-}=d_{E^+}$ for all D5 branes, we call the diagram \emph{co-balanced}. A co-balanced brane diagram is the cyclic concatenation of pieces like the picture on the left.
\[
\begin{tikzpicture}[baseline=-.2cm]
\draw[thick] (-0.1,0) -- (6.1,0);
\draw[red,thick] (-0.2,-0.3) -- (0.2,0.3);
\draw[blue,thick] (1.2,-0.3) -- (0.8,0.3);
\draw[blue,thick] (1.8,0.3) -- (2.2,-0.3);
\draw[blue,thick] (3.8,0.3) -- (4.2,-0.3);
\draw[blue,thick] (4.8,0.3) -- (5.2,-0.3);
\draw[red,thick] (6.2,0.3) -- (5.8,-0.3);
\draw (0.5,0.3) node {$v$};
\draw (1.5,0.3) node {$v$};
\draw (3,0.3) node {$\cdots$};
\draw (4.5,0.3) node {$v$};
\draw (5.5,0.3) node {$v$};
\draw (0.5,-.2 ) to[out=-90,in=90] (3,-.8) to[out=90,in=-90] (5.5,-.2);
\draw (3,-1) node {$w$};
\end{tikzpicture}
\qquad\rightarrow\qquad
\begin{tikzpicture}[baseline=.3cm]
\draw[,thick] (-.5,1) -- (.5,1);
\draw[fill] (0,1) circle (3pt);     
\node at (0,1.3) {$v$} ;
\draw[thick] (0,1) -- (0,0.1); 
\draw[thick] (-.1,-0.1) rectangle (0.1,0.1); 
\node at (0,-.4) {$w$};
\end{tikzpicture}
\]
If we replace such a picture with the picture on the right, we arrive at a combinatorial code called framed quiver of type $\hat{A}$. 
The geometric significance is that bow varieties associated with co-balanced brane diagrams are Nakajima quiver varieties associated with the  quiver obtained this way \cite{nakajima2017cherkis}. Hence we will study which brane diagrams are Hanany-Witten equivalent to a co-balanced one.

\begin{theorem}
\label{wittentheorem}
\begin{enumerate}
\item\label{it:wittentheorem1} A standard brane diagram of type $(d,e,f)$ is Hanany-Witten equivalent to a co-balanced diagram if and only if $e$ is an {\em $m$-bounded non-decreasing integer sequence}, that is, if it satisfies $e_i\in \Z$,
    \[
e_1 \leq e_2 \leq e_3 \leq \ldots \leq e_{n-1} \leq e_n \ \leq e_1+m.
 \]
\item\label{it:wittentheorem2} In this case, the standard $(d,e,f)$ diagram is HW equivalent to a standard $(d',e',f')$ diagram with $e'$ satisfying
\[
-m < e'_1\leq e'_2 \leq \ldots \leq e'_n \leq 0.\]

\item\label{it:wittentheorem3} Let $w_l$ be the number of times $-l$ appears as a component of $e'$. Then the  Hanany-Witten equivalent quiver is
\[
\begin{tikzpicture}[scale=.7]
\draw[thick] (-2,1) -- (4,1);
\node at (4.5,1) {$\cdots$};
\draw[thick] (5,1) -- (10,1);
\draw[rounded corners=8pt,thick] (10,1) -- (10.5,1) -- (10.5,-1.6) -- (-2.5,-1.6) -- (-2.5,1) -- (-2,1);

\draw[fill] (-1,1) circle (3pt);     \node at (-1,1.6) {$d'\!\!+\!\!\sum_{2}^{m}\!f'_j$} ;
\draw[thick] (-1,1) -- (-1,0.1); 
\draw[thick] (-1.1,-0.1) rectangle (-0.9,0.1); 
\node at (-1,-.5) {$w_{m-1}$};

\draw[fill] (2,1) circle (3pt);     \node at (2,1.6) {$d'\!\!+\!\!\sum_{3}^{m}\!f'_j$} ;
\draw[thick] (2,1) -- (2,0.1); 
\draw[thick] (1.9,-0.1) rectangle (2.1,0.1); 
\node at (2,-.5) {$w_{m-2}$};

\draw[fill] (7,1) circle (3pt);     \node at (7,1.6) {$d'\!\!+\!\!f'_m$} ;
\draw[thick] (7,1) -- (7,0.1); 
\draw[thick] (6.9,-0.1) rectangle (7.1,0.1); 
\node at (7,-.5) {$w_{1}$};

\draw[fill] (9,1) circle (3pt);     \node at (9,1.6) {$d'\!\!+\!\!\sum_{1}^m\!f'_j$} ;
\draw[thick] (9,1) -- (9,0.1); 
\draw[thick] (8.9,-0.1) rectangle (9.1,0.1); 
\node at (9,-.5) {$w_{0}$};

\end{tikzpicture}.
\]
\end{enumerate}
\end{theorem}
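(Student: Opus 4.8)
The plan is to reduce the whole statement to a single computation of how one Hanany--Witten transition changes the local charges, and then to read off all three parts from the way the D5 branes redistribute among the NS5 branes.

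\textbf{Step 1 (the atomic charge computation).} First I would record the effect of a single HW move on the two branes involved. In the Hanany--Witten Definition, with dimensions $d_1,d_2,d_3$ on the left, $d_1,d_2',d_3$ on the right, and $d_2+d_2'=d_1+d_3+1$, a direct substitution into $e=d_{E^+}-d_{E^-}$, $f=d_{F^-}-d_{F^+}$ shows that when a D5 brane is pushed to the \emph{left} past an adjacent NS5 brane (i.e.\ $F E \mapsto E F$), both the charge $e$ of that D5 brane and the charge $f$ of that NS5 brane increase by $1$; pushing it to the right decreases both by $1$. In particular [move-1] is $m$ leftward passages of $E_1$ (raising $e_1$ to $e_1+m$ and each $f_j$ by $1$) followed by the cyclic relabelling, and [move-2] is the analogous statement for $F_m$, which reconciles the atomic rule with the two displayed moves.

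\textbf{Step 2 (invariance and normalization).} Next I would verify that ``$e$ is an $m$-bounded non-decreasing sequence'' is preserved by [move-1], [move-2], and their inverses: for [move-1] the vector $(e_2,\dots,e_n,e_1+m)$ is non-decreasing since $e_n\le e_1+m$ and $m$-bounded since $e_1\le e_2$, while [move-2] merely shifts all entries by $1$. Hence the property depends only on the HW class, so to prove part (1) it suffices to produce one good standard representative, and the ``only if'' and ``if'' directions become inverse constructions. For part (2) I would normalize: applying the inverse of [move-2] shifts $e$ into a window ending at $0$, and whenever $e_n=e_1+m$ I apply [move-1] to send a minimal entry to the top, where it becomes $e_1+m=e_n$, strictly reducing the number of minimal entries; iterating terminates with $e_n-e_1<m$, i.e.\ with $-m<e_1'\le\cdots\le e_n'\le 0$.

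\textbf{Step 3 (the balancing bijection and the quiver).} With $e'$ in the normalized window I would prove part (1) by pushing each $E_i$ to the \emph{left} past exactly $-e_i'$ NS5 branes; by Step 1 this raises its charge to $0$, and since $0\le -e_i'<m$ each D5 brane crosses fewer than $m$ NS5 branes, so it stays inside one fundamental domain and the resulting diagram is co-balanced. Conversely, standardizing any co-balanced diagram pushes the D5 branes of the arc lying $l$ steps to the left of the base region to the right past $l<m$ NS5 branes, giving charge $-l\le 0$, and because arcs further left produce more negative charges these values come out non-decreasing along $E_1,\dots,E_n$; this yields an $m$-bounded non-decreasing $e$ and closes the equivalence. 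For part (3), the D5 branes with $e_i'=-l$ are exactly the $w_l$ branes that balance into the $l$-th arc, so the framing at the corresponding node equals $w_l$, and starting from $d_0=d'$ and crossing each $F_j$ (which changes the dimension by $f_j'$) gives node labels of the form $d'+\sum_{i=j}^{m} f_i'$, matching the displayed quiver.

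\textbf{Main obstacle.} I expect the cyclic (affine) bookkeeping to be the delicate point: one must track charges and dimensions consistently all the way around the loop and verify that the bound $-e_i'<m$ is precisely what keeps every D5 brane within a single fundamental domain, so that the balanced configuration is genuinely co-balanced and its arcs line up with the nodes of the claimed quiver. The local rule of Step 1 and the invariance of Step 2 are routine; the real content is that $m$-boundedness together with monotonicity is exactly the obstruction-free condition under which all D5 branes can be simultaneously balanced.
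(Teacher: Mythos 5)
Your proposal is correct and follows essentially the same route as the paper: normalize $e$ into the window $-m<e'_1\leq\cdots\leq e'_n\leq 0$ via [move-1]/[move-2], push each $E_i$ left through $-e'_i$ NS5 branes to reach the co-balanced form, and obtain the converse by reversing these transitions together with the invariance of $m$-bounded monotonicity under the moves (the paper's Remark~\ref{rem:mboundedcomp}). Your Step~1, making the single-crossing charge computation explicit, is a detail the paper leaves implicit but adds nothing essentially different.
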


\begin{remark} 
\label{rem:mboundedcomp}
It is obvious from the definitions of [move-1] and [move-2] that the condition $e$ is an $m$-bounded non-decreasing integer sequence is invariant under these moves. 
\end{remark}

\begin{proof}
If $e$ is an $m$-bounded non-decreasing integer sequence, then repeated applications of [move-1] or its inverse bring it to the form in Part~\eqref{it:wittentheorem2}, proving Part~\eqref{it:wittentheorem2}. 

Consider the standard $(d',e',f')$ brane diagram. Let us move $E_1$ through $-e_1$ NS5 branes to the left, using HW transitions. Then let us move $E_2$ through $-e_2$ NS5 branes to the left, using HW transitions, etc. What we obtain is exactly the brane diagram version of the quiver in Part~\eqref{it:wittentheorem3}. This proves that if $e$ is an $m$-bounded non-decreasing integer sequence, then the $(d,e,f)$ standard diagram is HW equivalent to the quiver in the figure.  

To prove the converse statement, let us consider the brane diagram version of the quiver above. The inverse of the HW transitions described above brings it to a standard form $(d',e',f')$ satisfying the condition in Part~\eqref{it:wittentheorem2}. Remark~\ref{rem:mboundedcomp} completes the proof.
\end{proof}

\begin{example} \label{ex:bowquiver}
Consider the brane diagram in standard form with $d=8$, $e=(-1,-1,0,$ $0,$ $1,1,2)$, $f=(-4,4,2)$. It is Hanany-Witten equivalent (using three inverse [move-1]'s) to $d=13$, $e'=(-2,-2,-1,-1,-1,0,0)$, $f'=(-7,1,-1)$. In turn, the associated bow variety is a quiver variety of type $\hat{A}$ with dimension vector $(13,12,6)$ and framing vector $(2,3,2)$.  
\end{example}

\subsection{D5 swaps}

\begin{definition}
Assume $d_2+d'_2 = d_1+d_3$. Carrying out the local change
\begin{equation}\label{eq:D5swap}
\begin{tikzpicture}[baseline=2]
\draw[thick] (0.1,0) -- (2.9,0);
\draw[blue,thick] (0.8,0.3) -- (1.2,-0.3);
\draw[blue,thick] (2.2,-0.3) -- (1.8,0.3);
\draw (0.5,0.3) node {$d_1$};
\draw (1.5,0.3) node {$d_2$};
\draw (2.5,0.3) node {$d_3$};
\draw[<->, thick] (4,0) -- (5,0);
\draw (4.5,0.3) node {D5 swap};
\draw[thick] (6.1,0)--(8.9,0);
\draw[blue,thick] (7.2,-0.3) -- (6.8,0.3);
\draw[blue,thick] (7.8,0.3) -- (8.2,-0.3);
\draw (6.5,0.3) node {$d_1$};
\draw (7.5,0.3) node {$d'_2$};
\draw (8.5,0.3) node {$d_3$};
\end{tikzpicture}
\end{equation}
in a brane diagram is called a D5 swap.
\end{definition}

\begin{lemma}For a brane diagram in standard $(d,e,f)$ form swapping the D5 branes $E_i$ and $E_{i+1}$ is the same as exchanging $e_i$ with $e_{i+1}$ in $e$ while keeping $d$ and $f$ fixed. 
\end{lemma}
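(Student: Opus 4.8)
The plan is to translate everything into the D3-brane multiplicities and read off the charges directly. First I would introduce notation: in the standard $(d,e,f)$ diagram let $p_j \coloneqq d_{E_j^+}$ denote the multiplicity of the D3 brane immediately to the right of $E_j$, for $1 \le j \le n$, and set $p_0 \coloneqq d = d_{E_1^-}$, the multiplicity of the D3 brane between $F_m$ and $E_1$. Since the D5 branes sit consecutively in standard form, we have $d_{E_j^-} = p_{j-1}$ and $d_{E_j^+} = p_j$, so that by definition
\[
e_j = d_{E_j^+} - d_{E_j^-} = p_j - p_{j-1}, \qquad 1 \le j \le n.
\]

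Next I would apply the D5 swap of $E_i$ and $E_{i+1}$. Locally this is the configuration with multiplicities $d_1 = p_{i-1}$, $d_2 = p_i$, $d_3 = p_{i+1}$, and the swap replaces the middle multiplicity $p_i$ by $p_i'$ subject to the constraint $d_2 + d_2' = d_1 + d_3$ from \eqref{eq:D5swap}, i.e. $p_i' = p_{i-1} + p_{i+1} - p_i$. All other D3 multiplicities are untouched, and the resulting diagram is again in standard form (the two branes involved are both D5, hence remain to the right of every NS5 brane). I can therefore read off the new charge vector $\tilde e$ directly: for $j \notin \{i, i+1\}$ the multiplicities $p_{j-1}, p_j$ are unchanged so $\tilde e_j = e_j$, while
\[
\tilde e_i = p_i' - p_{i-1} = p_{i+1} - p_i = e_{i+1}, \qquad
\tilde e_{i+1} = p_{i+1} - p_i' = p_i - p_{i-1} = e_i,
\]
which is precisely the claimed transposition of $e_i$ and $e_{i+1}$.

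Finally I would check that $d$ and $f$ are unaffected. The swap alters only the single D3 multiplicity $p_i$ lying strictly between the two swapped D5 branes; in particular $p_0 = d$ is untouched since $i \ge 1$, so $\tilde d = d$. The charges $f_j = d_{F_j^-} - d_{F_j^+}$ are computed from D3 branes adjacent to the NS5 branes, all of which lie to the left of $E_1$ and are therefore disjoint from the locus of the swap; hence $\tilde f = f$.

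I do not expect a genuine obstacle here, as the content is essentially bookkeeping. The one point requiring a little care is confirming that a D5 swap of two adjacent D5 branes in a standard diagram modifies only the intermediate multiplicity and preserves standard form, so that the output charge vector is read off in the same way as for the input; once this is granted, the constraint $d_2 + d_2' = d_1 + d_3$ forces exactly the transposition above, and the invariance of $d$ and $f$ is immediate from locality of the move.
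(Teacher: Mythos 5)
Your proof is correct and is essentially the paper's argument spelled out in detail: the paper disposes of this lemma with ``Follows from the construction,'' and your bookkeeping with the multiplicities $p_j = d_{E_j^+}$, the constraint $p_i' = p_{i-1}+p_{i+1}-p_i$, and the locality of the move is exactly the verification being alluded to. No gaps.
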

\begin{proof} Follows from the construction.
\end{proof}

When we carry out a D5 swap on a brane diagram, the associated bow variety changes in general, even its dimension may change. In some examples the change is a homotopy equivalence (eg. one is the total space of a vector bundle with base space the other), but in other examples the cohomology ring changes considerably \cite{ji2023bow}. 


Let $X$ and $X'$ be the bow varieties associated to the diagrams on the left and right of~\eqref{eq:D5swap}. As already observed in \cite[Section~8]{rimanyi2020bow} there is a natural bijection between the torus fixed points of $X$ and $X'$. The bijection on tie diagrams is depicted in  Figure~\ref{fig:D5swap}.

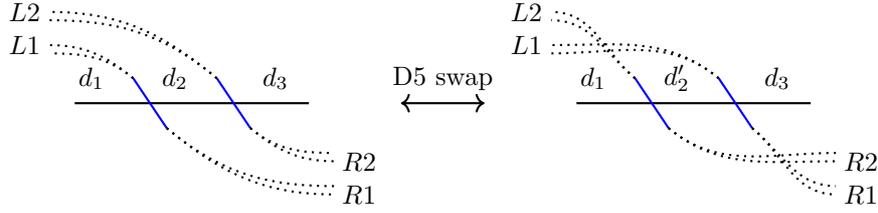
\begin{figure}
\begin{equation*}
\begin{tikzpicture}[baseline=-3,scale=1.1]
\draw[thick] (0.1,0) -- (2.9,0);
\draw[blue,thick] (0.8,0.3) -- (1.2,-0.3);
\draw[blue,thick] (2.2,-0.3) -- (1.8,0.3);
\draw (0.3,0.3) node {$d_1$};
\draw (1.3,0.3) node {$d_2$};
\draw (2.5,0.3) node {$d_3$};
\draw[dotted,thick] (0.8,0.3) to[out=140,in=0] (-.2,.6);
\draw[dotted,thick] (0.8,0.3) to[out=140,in=0] (-.2,.7);
\draw[dotted,thick] (1.8,0.3) to[out=140,in=0] (-.2,1);
\draw[dotted,thick] (1.8,0.3) to[out=140,in=0] (-.2,1.1);
\draw[dotted,thick] (1.2,-0.3) to[out=-40,in=180] (3.2,-1);
\draw[dotted,thick] (1.2,-0.3) to[out=-40,in=180] (3.2,-1.1);
\draw[dotted,thick] (2.2,-0.3) to[out=-40,in=180] (3.2,-.6);
\draw[dotted,thick] (2.2,-0.3) to[out=-40,in=180] (3.2,-.7);
\draw (-.5,1.1) node {$L2$};
\draw (-.5,.7) node {$L1$};
\draw (3.5,-1.1) node {$R1$};
\draw (3.5,-.7) node {$R2$};
\draw[<->, thick] (4,0) -- (5,0);
\draw (4.5,0.3) node {D5 swap};
\draw[thick] (6.1,0)--(8.9,0);
\draw[blue,thick] (7.2,-0.3) -- (6.8,0.3);
\draw[blue,thick] (7.8,0.3) -- (8.2,-0.3);
\draw (6.3,0.3) node {$d_1$};
\draw (7.3,0.3) node {$d'_2$};
\draw (8.5,0.3) node {$d_3$};
\draw[dotted,thick] (6.8,0.3) to[out=140,in=0] (5.8,1);
\draw[dotted,thick] (6.8,0.3) to[out=140,in=0] (5.8,1.1);
\draw[dotted,thick] (7.8,0.3) to[out=140,in=0] (5.8,.6);
\draw[dotted,thick] (7.8,0.3) to[out=140,in=0] (5.8,.7);
\draw[dotted,thick] (7.2,-0.3) to[out=-40,in=180] (9.2,-.6);
\draw[dotted,thick] (7.2,-0.3) to[out=-40,in=180] (9.2,-.7);
\draw[dotted,thick] (8.2,-0.3) to[out=-40,in=180] (9.2,-1.1);
\draw[dotted,thick] (8.2,-0.3) to[out=-40,in=180] (9.2,-1);
\draw (5.5,1.1) node {$L2$};
\draw (5.5,.7) node {$L1$};
\draw (9.5,-1.1) node {$R1$};
\draw (9.5,-.7) node {$R2$};
\end{tikzpicture}
\end{equation*}
\caption{The effect of a D5 swap on tie diagrams encoding torus fixed points. It is instructive to verify the consistency of these diagrams with the condition $d_2+d'_2=d_1+d_3$. In the language of the Maya diagrams the bijection between the fixed points is swapping the two relevant rows.}
\label{fig:D5swap}
\end{figure}

Hence, when counting torus fixed points for a $(d,e,f)$ diagram, we are allowed to carry out [move-1], [move-2], as well as permuting the components of $e$. In particular, Theorem~\ref{wittentheorem} implies that for any bow variety there is a quiver variety whose torus fixed point count is the same. 

\medskip

In Section \ref{sec:D5again} we will present a more refined analysis of the effect of D5 swap on the brane diagram. 

\begin{remark}
    The torus fixed point count is also invariant under the analogous NS5 brane swaps \cite[Section~8]{rimanyi2020bow}---imagine Figure~\ref{fig:D5swap} upside down. The brane diagrams of quiver varieties with $w=(2,3,2)$ and $v$ being one of 
    $
    (10,7,3), (10,9,3), (2,7,3), (2,1,3), (4,9,3), (4,1,3)
    $
    can be achieved from each other by HW transitions and NS5 brane swaps. Hence the fixed point count of these quiver varieties are the same. 
\end{remark}

\section{Generating series and core-quotient decomposition}
\label{sec:gen_series}


Recall that $\mathcal{M}(d,e,f)$ denotes the bow variety associated with the brane diagram in standard $(d,e,f)$ form.

Denote by $K_0(Var)$ the Grothendieck ring of varieties over $\CC$, and let $q$ be a formal variable. Consider the generating series of the classes of the bow varieties for various values of $d$:
\[ \CZ(q)=\CZ_{e,f}(q)=\sum_{d \geq 0} [\mathcal{M}(d,e,f)]q^d \in K_0(Var)\llbracket q \rrbracket. \]
\begin{remark} In a similar manner, one could introduce a multivariable generating series 
\[\CZ_{e,f}(q_{-m+1},\dots,q_n) =\sum_{d \geq 0} [\mathcal{M}(d,e,f)]q_{-m+1}^{d_{-m+1}}\cdots q_{n}^{d_{n}}\]
with $m+n$ formal variables to keep track of all multiplicities. However,  in this paper, we will not explore this multivariable version.
\end{remark}

In enumerative problems one often considers a motivic measure, which is a ring homomorphism $K_0(Var) \to R$ into a commutative ring $R$. Some well-known examples of motivic measures include the Euler charactestic (with values in $\ZZ$), the virtual Poincaré polynomial (with values in the one-variable polynomial ring $\ZZ[t]$) and the E-polynomial (with values in the two-variable polynomial ring $\ZZ[u,v]$).
A motivic measure $\phi$ induces a ring homomorphism \[\phi: K_0(Var)\llbracket q \rrbracket \to R\llbracket q \rrbracket.\] 

\begin{definition} 
Let 
\[
\begin{array}{rlll}
Z(q)= & \chi(\CZ)(q)=& \sum_{d \geq 0} \chi(\mathcal{M}(d,e,f))q^d& \in \ZZ \llbracket q \rrbracket, \\
Z(q,t)=& P_t(\CZ)(q)=& \sum_{d \geq 0} P_t(\mathcal{M}(d,e,f))q^d & \in \ZZ[t] \llbracket q \rrbracket
\end{array}
\]
denote the generating series of the Euler numbers, and the generating series of the (virtual) Poincaré polynomials associated with the Borel-Moore homology.
\end{definition}

In Section~\ref{sec:refinedgenseries}, we will derive closed formulas for $\CZ(q)$ and $Z(q,t)$ in various cases. However, first, as warm-up, let us calculate $Z(q)$ by introducing a core-quotient type combinatorial decomposition of generalized Maya diagrams. 

Let $M$ be a Maya diagram, consisting of blocks $M^{b}, b \in \frac{2\ZZ+1}{2}$. Suppose that  
\[M^{\frac{k}{2}}_{ij}=1
\qquad\qquad\text{and}\qquad\qquad
M^{\frac{k+2}{2}}_{ij}=0.\]
We obtain another Maya diagram if we replace the entry 1 by 0 and the entry 0 by 1 at the $ij$-positions of $M^{\frac{k}{2}}$ and $M^{\frac{k+2}{2}}$. When drawn on the $n \times \infty$ matrix, this corresponds to shifting the 1 from $M^{\frac{k}{2}}_{ij}$ to the right while keeping its coordinate in the $n \times m$ arrangement. The \emph{core} of a Maya diagram $M$
is the Maya diagram obtained from $M$ by successively shifting all 1's to the right using such moves at all entries, until this is no longer possible at any entry. 
\begin{lemma}
Shifting a 1 right by one block has the following effects on the dimension vectors of the diagram:
\begin{enumerate}
    \item $e$ does not change
    \item $f$ does not change
    \item $d$ decreases by 1
\end{enumerate}
\end{lemma}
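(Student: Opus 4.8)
The plan is to verify the three assertions directly from the combinatorial formulas (ii)--(iv) that read $e_i$, $f_j$ and $d$ off a Maya diagram. The move is \emph{local}: it alters only the two entries $M^{k/2}_{ij}$ (a $1$ that becomes a $0$) and $M^{(k+2)/2}_{ij}$ (a $0$ that becomes a $1$), leaving every other entry untouched. So for each of $e$, $f$, $d$ I would simply compute the change produced by exactly these two flips.

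For the invariance of $e$ I would record the contribution of a single entry $M^b_{i'j'}$ to $e_{i'}$ prescribed by formula (ii): a $0$ in a positive block contributes $+1$, a $1$ in a negative block contributes $-1$, and every other entry contributes $0$. The useful observation is that this makes each flip \emph{uniform}: turning a $1$ into a $0$ changes the contribution by $+1$ whether the block is positive ($0\mapsto +1$) or negative ($-1\mapsto 0$), and turning a $0$ into a $1$ changes it by $-1$ in either case. Since the move performs one flip of each kind, and both flips occur in row $i$ (so that only $e_i$ could possibly change), the net change is $+1-1=0$. A pleasant feature of this formulation is that it needs no separate treatment of the case where the two blocks straddle the origin. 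The argument for $f_j$ via formula (iii) is word-for-word the same, the two flips now lying in the common column $j$.

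For $d$ I would use formula (iv), assigning to a $1$ in the $s$-th negative block $\tfrac{-(2s-1)}{2}$ the weight $s\ge 1$, and to a $0$ in the positive block $\tfrac{2r+1}{2}$ the weight $r\ge 0$, so that $0$'s in block $\tfrac12$ carry weight $0$. Writing $p=k/2$ for the source block and $p+1=(k+2)/2$ for the target, I would distinguish three cases by the signs of $p$ and $p+1$. If both are positive, the destroyed $0$ had weight $p+\tfrac12$ while the created $0$ has weight $p-\tfrac12$, a change of $-1$; if both are negative, the destroyed $1$ had weight $\tfrac12-p$ while the created $1$ has weight $-p-\tfrac12$, again $-1$; and in the straddling case $p=-\tfrac12$, $p+1=\tfrac12$ the destroyed $1$ in block $\tfrac{-1}{2}$ had weight $1$, whereas the destroyed $0$ and the created $1$ both sit in block $\tfrac12$ with weight $0$, so the change is once more $-1$. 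Hence $d$ drops by exactly $1$ in every case.

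The whole argument is bookkeeping, and the only spot requiring genuine care---the sole obstacle---is the boundary case at the origin: one must remember that $0$'s in block $\tfrac12$ are \emph{not} counted by (iv), so that the straddling move still yields the clean decrement $d\mapsto d-1$ rather than some other value. Conceptually, the invariance of $e$ and $f$ expresses that the shift preserves the charge of each row and each column of the Maya diagram, while the computation for $d$ says that it removes exactly one unit of the area-type statistic---which is precisely what is needed for the core to be a well-defined minimal representative in the core--quotient decomposition.
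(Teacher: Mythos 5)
Your proof is correct and follows exactly the route the paper takes: the paper's proof is the one-line remark that the lemma ``follows directly from properties (i)--(iv) of Maya diagrams,'' and your argument simply carries out that bookkeeping in full, including the only delicate point (the block $\tfrac12$ carrying weight $0$ in property (iv), so the straddling case still gives $d\mapsto d-1$). Nothing to add.
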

\begin{proof} This follows directly from properties (i)--(iv) of Maya diagrams (end of Section~\ref{sec:maya}).
\end{proof}
We denote by $C(e,f)$ the set of $(e,f)$-core diagrams, and by 
\[\mathrm{core}\colon M(e,f) \to C(e,f)\] the map which takes an $(e,f)$-diagram to its core. 

In any $(e,f)$-core there is a unique integer $c_{ij}$  for each $1 \leq i \leq n$, $1 \leq j \leq m$ such that the leftmost 1 at the $ij$ entry occurs in the block \[\frac{2c_{ij}+1}{2} \in \frac{2 \ZZ+1}{2}.\] We associate the integer matrix  
$
c=(c_{ij})_{1 \leq i \leq n,\, 1 \leq j \leq m}
$ 
with each core. 
We clearly have
\[ \sum_{j=1}^m c_{ij}=e_i, \quad 1 \leq i \leq n 
\qquad\qquad\text{and}\qquad\qquad
 \sum_{i=1}^m c_{ij}=f_j, \quad 1 \leq j \leq m. \]
\begin{example}
    The core of the first Maya diagram in Example \ref{ex:2Maya} and the associated $(c_{ij})$ matrix are: 
    \[
\begin{tikzpicture}
\draw (-2.2,-.2) -- (3.4,-0.2);\draw (-2.2,1) -- (3.4,1);
\draw (-.2,-0.5) -- (-.2,1.3);
\draw (0.2,1.4) node {$\frac{1}{2}$};
\draw (1,1.4) node {$\frac{3}{2}$};
\draw (1.8,1.4) node {$\frac{5}{2}$};
\draw (2.6,1.4) node {$\frac{7}{2}$};
\draw (-.6,1.4) node {$\frac{-1}{2}$};
\draw (-1.4,1.4) node {$\frac{-3}{2}$};
 \draw (0,0) node {$1$};
 \draw (.4,0) node {$0$};
 \draw (0,.4) node {$0$};
 \draw (.4,.4) node {$1$};
 \draw (0,.8) node {$0$};
 \draw (.4,.8) node {$0$};
 \draw (-.2,-.2)--(0.6,-0.2) -- (0.6,1) -- (-0.2,1) -- (-.2,-.2);
 \draw (0.8,0) node {$1$};
 \draw (1.2,0) node {$0$};
 \draw (0.8,.4) node {$0$};
 \draw (1.2,.4) node {$1$};
 \draw (0.8,.8) node {$1$};
 \draw (1.2,.8) node {$1$};
 \draw (.6,-0.2)--(1.4,-0.2) -- (1.4,1) -- (0.6,1) -- (0.6,-.2);
 \draw (1.6,0) node {$1$};
 \draw (2,0) node {$1$};
 \draw (1.6,.4) node {$1$};
 \draw (2,.4) node {$0$};
 \draw (1.6,.8) node {$1$};
 \draw (2,.8) node {$1$};
 \draw (1.4,-0.2)--(2.2,-0.2) -- (2.2,1) -- (1.4,1) -- (1.4,-.2);
 \draw (2.4,0) node {$1$};
 \draw (2.8,0) node {$1$};
 \draw (2.4,.4) node {$1$};
 \draw (2.8,.4) node {$1$};
 \draw (2.4,.8) node {$1$};
 \draw (2.8,.8) node {$1$};
 \draw (2.2,-0.2)--(3,-0.2) -- (3,1) -- (2.2,1) -- (2.2,-.2);
 \draw (-.8,0) node {$1$};
 \draw (-.4,0) node {$0$};
 \draw (-0.8,.4) node {$0$};
 \draw (-.4,.4) node {$0$};
 \draw (-0.8,.8) node {$0$};
 \draw (-.4,.8) node {$0$};
 \draw (-1,-.2)--(-0.2,-0.2) -- (-0.2,1) -- (-1,1) -- (-1,-.2);
 \draw (-1.6,0) node {$0$};
 \draw (-1.2,0) node {$0$};
 \draw (-1.6,.4) node {$0$};
 \draw (-1.2,.4) node {$0$};
 \draw (-1.6,.8) node {$0$};
 \draw (-1.2,.8) node {$0$};
 \draw (-1.8,-.2)--(-1,-0.2) -- (-1,1) -- (-1.8,1) -- (-1.8,-.2);
\draw (3.4,.5) node {$\forall 1$};
\draw (-2.2,.5) node {$\forall 0$};
\draw ( 4,0) node {$E_3$};
\draw ( 4,.4) node {$E_2$};
\draw ( 4,.8) node {$E_1$};
\draw (0,-0.6) node {$F_1$};
\draw (0.4,-0.6) node {$F_2$};
\end{tikzpicture}
\qquad\qquad
\begin{tikzpicture}
 \draw (0,0) node {$-1$};
 \draw (.4,0) node {$2$};
 \draw (0,.4) node {$2$};
 \draw (.4,.4) node {$1$};
 \draw (0,.8) node {$1$};
 \draw (.4,.8) node {$1$};
 \draw (-.2,-.2)--(0.6,-0.2) -- (0.6,1) -- (-0.2,1) -- (-.2,-.2);
 \draw (1.4,0) node {$e_3=1$};
 \draw (1.4,0.4) node {$e_2=3$};
 \draw (1.4,0.8) node {$e_1=2$};
 \node[rotate=90] at (0,1.6) {$f_1=2$};
 \node[rotate=90] at (0.4,1.6) {$f_2=4$};
\end{tikzpicture}.
\]
\end{example}

Conversely, any $n \times m$ integer matrix completely determines a core, so we get a bijection
\begin{equation}\label{typeA-cores} 
C(e,f) \longleftrightarrow \left\{(c_{ij}):\, \sum_j  c_{ij}=e_i, 1 \leq i \leq n,  \, \sum_{i} c_{ij}=f_j, 1 \leq j \leq m \right\}\subset \ZZ^{n \times m}.\end{equation}
\begin{lemma} 
\label{lem:dij}
For the $(e,f)$-core corresponding to the matrix $(c_{ij})$, we have 
\[ 
d= \sum_{i,j} \frac{c_{ij}(c_{ij}-1)}{2}.
\]
\end{lemma}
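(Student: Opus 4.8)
The plan is to substitute the explicit shape of a core into the dimension formula (iv) at the end of Section~\ref{sec:maya}, and to carry out the resulting count one matrix position $(i,j)$ at a time. The first step is to record what a core looks like at a fixed position $(i,j)$. Since a core admits no further rightward shift, there is no block $\tfrac{k}{2}$ carrying a $1$ at $(i,j)$ whose successor block $\tfrac{k+2}{2}$ carries a $0$ at $(i,j)$; reading the $(i,j)$-entries block by block, this says we never pass from a $1$ to a $0$. Together with the boundary behaviour (property (i)) this forces the $(i,j)$-strip to equal $0$ in every block strictly to the left of $\tfrac{2c_{ij}+1}{2}$ and $1$ in every block from $\tfrac{2c_{ij}+1}{2}$ onward, the transition block being by definition the one containing the leftmost $1$, i.e.\ $\tfrac{2c_{ij}+1}{2}$.

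Next I would feed this monotone strip into formula (iv) and split the total as a sum of local contributions over the positions $(i,j)$. Indexing the positive blocks as $\tfrac{2k+1}{2}$ with weight $k$ for $k \geq 1$, and the negative blocks as $\tfrac{-(2k-1)}{2}$ with weight $k$ for $k \geq 1$, the contribution of a single position is computed by a direct comparison of block indices. On the positive side the $(i,j)$-entry is a $0$ in block $\tfrac{2k+1}{2}$ precisely when $k < c_{ij}$, so this part contributes $\sum_{k=1}^{c_{ij}-1} k = \tfrac{c_{ij}(c_{ij}-1)}{2}$ when $c_{ij} \geq 1$ and nothing otherwise. On the negative side the entry is a $1$ in block $\tfrac{-(2k-1)}{2}$ precisely when $k \leq -c_{ij}$, so this part contributes $\sum_{k=1}^{-c_{ij}} k = \tfrac{(-c_{ij})(-c_{ij}+1)}{2} = \tfrac{c_{ij}(c_{ij}-1)}{2}$ when $c_{ij} \leq -1$ and nothing otherwise.

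Finally I would observe that for each of the three ranges $c_{ij} \geq 1$, $c_{ij}=0$ and $c_{ij}\leq -1$, exactly one of the two partial sums is nonempty and both evaluate to the single expression $\tfrac{c_{ij}(c_{ij}-1)}{2}$, with the middle case giving $0$. Summing the per-position contributions over all $1 \leq i \leq n$ and $1 \leq j \leq m$ then yields $d = \sum_{i,j} \tfrac{c_{ij}(c_{ij}-1)}{2}$, as claimed. The only point requiring genuine care is the off-by-one in the block indexing: formula (iv) deliberately omits block $\tfrac12$ from the positive count while retaining block $\tfrac{-1}{2}$ in the negative count, and it is exactly this asymmetry that makes the positive and negative local contributions collapse to the same quadratic $\tfrac{c_{ij}(c_{ij}-1)}{2}$ rather than to two different expressions. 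Beyond this bookkeeping I anticipate no real obstacle, since once the monotone shape of the core is in hand the statement reduces to summing two arithmetic progressions.
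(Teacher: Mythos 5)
Your proposal is correct and follows essentially the same route as the paper: the paper's proof likewise isolates the contribution of each $(i,j)$-position to formula (iv) and evaluates it as $1+2+\cdots+(c_{ij}-1)=\binom{c_{ij}}{2}$ for $c_{ij}>0$ and $1+2+\cdots+(-c_{ij})=\binom{-c_{ij}+1}{2}=\binom{c_{ij}}{2}$ for $c_{ij}\leq 0$. The only difference is that you spell out the monotone $0\cdots01\cdots1$ shape of each $(i,j)$-strip of a core, which the paper leaves implicit in the definition of $c_{ij}$ and the bijection with contingency tables; your index bookkeeping, including the deliberate asymmetry that block $\tfrac12$ carries weight zero, is accurate.
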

\begin{proof}
Consider the contribution of the $ij$-entries to the expression for $d$ appearing in property (iv) above. For $c_{ij} > 0$, the contribution is $1+2+\ldots+(c_{ij}-1)=\binom{c_{ij}}{2}$. For $c_{ij}\leq 0$, the contribution is $1+2+\ldots+(-c_{ij})=\binom{-c_{ij}+1}{2}=\binom{c_{ij}}{2}$.
\end{proof}

\begin{lemma} There is a bijection
\[ M(e,f) \longleftrightarrow C(e,f) \times P^{nm} \]
where $P$ is the set of partitions. (That is, $P^{nm}$ is the set of ordered $nm$-tuples of partitions.)
\end{lemma}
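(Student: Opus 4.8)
The plan is to exploit the fact that both the core construction and the extraction of a partition take place \emph{independently} at each of the $nm$ matrix positions $(i,j)$, so that the whole problem factors into $nm$ copies of the classical bijection between one-dimensional Maya diagrams of fixed charge and partitions.

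First I would observe that a generalized Maya diagram $M \in M(e,f)$ is nothing but the datum, for each pair $(i,j)$ with $1 \le i \le n$ and $1 \le j \le m$, of a bi-infinite $01$-sequence $s^{(ij)} = (M^{\frac{2k+1}{2}}_{ij})_{k \in \ZZ}$, which by property (i) is eventually $0$ on the left and eventually $1$ on the right. The elementary move defining the core---replacing a $(1,0)$ pair in blocks $\frac{k}{2},\frac{k+2}{2}$ at a fixed position by $(0,1)$---alters only the single sequence $s^{(ij)}$ and leaves all others untouched. Hence $\mathrm{core}\colon M(e,f)\to C(e,f)$ acts on each $s^{(ij)}$ separately, pushing its $1$'s maximally to the right; the resulting core sequence equals $0$ for $k < c_{ij}$ and $1$ for $k \ge c_{ij}$, recording exactly the integer $c_{ij}$ whose leftmost $1$ sits in block $\frac{2c_{ij}+1}{2}$.

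Second I would invoke the classical correspondence between one-dimensional Maya diagrams and partitions. For a fixed charge value $c$, the eventually-$0$/eventually-$1$ sequences whose core has its leftmost $1$ in block $c$ are in canonical bijection with the set $P$ of partitions, the all-pushed-right configuration corresponding to the empty partition and each elementary right-shift corresponding to the removal of one box. Thus to each $s^{(ij)}$ I attach its charge $c_{ij}$ together with a partition $\lambda_{ij}\in P$, and conversely the pair $(c_{ij},\lambda_{ij})$ reconstructs $s^{(ij)}$ uniquely. Assembling over all $(i,j)$ produces the map $M \mapsto (\mathrm{core}(M),(\lambda_{ij})_{i,j})$, the charges $(c_{ij})$ determining the core through the bijection~\eqref{typeA-cores}.

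The step requiring care---and the one I expect to be the main obstacle---is checking that the inverse map actually lands in $M(e,f)$, i.e.\ that starting from an arbitrary core in $C(e,f)$ and an arbitrary $nm$-tuple of partitions, the reconstructed diagram satisfies conditions (i)--(iv). Condition (i) is automatic, since each $\lambda_{ij}$ is finite and so only finitely many entries differ from the core. Conditions (ii) and (iii), which pin down $e$ and $f$, depend only on the core: this is precisely the content of the earlier lemma stating that a right-shift leaves $e$ and $f$ unchanged, so they hold for $M$ if and only if they hold for $\mathrm{core}(M)$, which they do by~\eqref{typeA-cores}. Finally (iv) merely computes $d$ and imposes no constraint, since $M(e,f)=\bigsqcup_{d\ge 0}M(d,e,f)$ ranges over all $d$; indeed the same lemma gives $d(M)=d(\mathrm{core}(M))+\sum_{i,j}|\lambda_{ij}|$, which together with Lemma~\ref{lem:dij} refines the bijection to the graded statement underlying Theorem~\ref{thm:main1}. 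This confirms that the map is a well-defined bijection.
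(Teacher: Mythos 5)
Your proposal is correct and follows essentially the same route as the paper: the paper's proof is precisely the observation that at each $(i,j)$-entry the leftward shifts of the successive $1$'s relative to the core form a weakly decreasing sequence of nonnegative integers, i.e.\ a partition. Your additional verification that the inverse map lands in $M(e,f)$ (via the invariance of $e$ and $f$ under elementary shifts and the nonnegativity of $d$ from property (iv)) is a detail the paper leaves implicit, but it is the same argument.
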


\begin{proof} For any $(e,f)$-diagram the positions of the 1's at the $ij$-entries  determine a partition: the leftmost 1 is shifted left from the leftmost 1 of its core by a nonnegative integer $\lambda_1$, the second leftmost 1 is shifted left from the second leftmost 1 of its core by a nonnegative integer $\lambda_2 \leq \lambda_1$, etc.
\end{proof}

Thus we obtain the following expression for 
$Z(q)=Z_{e,f}(q)=\sum_{d \geq 0} |M(d,e,f)|q^d$.

\begin{theorem}
\label{thm:eulerchargen}  Let $e \in \ZZ^{n}$ and $f \in \ZZ^{m}$ be fixed vectors. Then
\[
Z(q) =  
\underbrace{
\left( 
\sum_{\substack{c \in \ZZ^{nm} \\ \sum_j c_{ij}=e_i \\ \sum_{i}c_{ij}=f_j }}
q^{\sum_{i,j} c_{ij}(c_{ij}-1)/2}
\right)
}_{Z_0(q)}
\cdot 
\prod_{l=1}^{\infty} \left(\frac{1}{1-q^l}\right)^{nm}. \]
\end{theorem}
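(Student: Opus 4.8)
The plan is to assemble the three combinatorial facts already in place---the bijection $M(e,f) \longleftrightarrow C(e,f) \times P^{nm}$, the additivity of the dimension parameter $d$ under one-block shifts, and Lemma~\ref{lem:dij}---into a factorization of the generating series. First I would rewrite the left-hand side as a single sum over Maya diagrams,
\[
Z(q) = \sum_{M \in M(e,f)} q^{\,d(M)},
\]
using that $|M(d,e,f)|$ counts exactly those $M \in M(e,f)$ with dimension parameter $d(M)=d$.

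The crux is to track the exponent $d(M)$ through the bijection $M \mapsto (\mathrm{core}(M), (\lambda^{(ij)})_{1\le i\le n,\,1\le j\le m})$. Here I would invoke the preceding lemma asserting that shifting a single $1$ one block to the right fixes $e$ and $f$ and decreases $d$ by exactly $1$. Reading this in reverse, one reconstructs $M$ from its core by displacing the $1$'s leftward according to the partitions $\lambda^{(ij)}$, each unit displacement raising $d$ by $1$. Since the proof of the bijection identifies $\lambda^{(ij)}$ precisely with the sequence of leftward displacements of the $1$'s at the $(i,j)$-entry relative to the core, the total displacement at that entry is $|\lambda^{(ij)}|$, whence
\[
d(M) = d(\mathrm{core}(M)) + \sum_{i,j} |\lambda^{(ij)}|.
\]

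This additive splitting of the exponent is exactly what forces the series to factor. Summing over the bijection and separating the core contribution from the partition contributions yields
\[
Z(q) = \sum_{c \in C(e,f)} \sum_{(\lambda^{(ij)}) \in P^{nm}} q^{\,d(\mathrm{core}) + \sum_{i,j}|\lambda^{(ij)}|}
= \Bigl( \sum_{c \in C(e,f)} q^{\,d(\mathrm{core})} \Bigr) \cdot \prod_{i,j} \Bigl( \sum_{\lambda \in P} q^{|\lambda|} \Bigr).
\]
Substituting $d(\mathrm{core}) = \sum_{i,j} c_{ij}(c_{ij}-1)/2$ from Lemma~\ref{lem:dij} and using the parametrization \eqref{typeA-cores} of cores by integer matrices with the prescribed row and column sums identifies the first factor with $Z_0(q)$. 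Finally, Euler's classical identity $\sum_{\lambda \in P} q^{|\lambda|} = \prod_{l\ge 1}(1-q^l)^{-1}$ turns each of the $nm$ partition factors into $\prod_{l}(1-q^l)^{-1}$, producing the product $\prod_{l\ge 1}(1-q^l)^{-nm}$ and hence the stated formula.

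There is little genuine obstruction, as every ingredient is already available; the single point demanding care is the additivity identity for $d(M)$---specifically, verifying that the partition attached to each $(i,j)$-entry measures leftward displacement in units that each contribute exactly $+1$ to $d$, so that the $q$-exponent separates cleanly into one core term and $nm$ independent partition-size terms and the sum over $P^{nm}$ genuinely decouples into a product of $nm$ copies of the partition generating function.
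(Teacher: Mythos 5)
Your proposal is correct and follows essentially the same route as the paper: the paper also obtains the formula by writing $Z(q)=\sum_{d}|M(d,e,f)|q^d$, applying the bijection $M(e,f)\leftrightarrow C(e,f)\times P^{nm}$, using the lemma that each one-block shift of a $1$ changes $d$ by exactly $1$ (so $d(M)=d(\mathrm{core}(M))+\sum_{i,j}|\lambda^{(ij)}|$), and substituting Lemma~\ref{lem:dij} together with Euler's partition identity. The only point worth making explicit, which both you and the paper leave tacit, is that $\chi(\mathcal{M}(d,e,f))$ equals the number of torus fixed points because the action has isolated fixed points.
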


The summation runs for a translated lattice in $\R^{nm}$, with the $q$-exponent being a quadratic function of the coordinates. Thus, the series can be viewed as translated lattice theta-function, that are expected to exhibit modular properties. 

\begin{example}
    For $e=(-1,1)$, $f=(0,0)$ the series in the large parentheses is 
    \[
    Z_0(q)=2\sum_{k=0}^\infty 
    q^{2k(k+1)+1}
    \]
    and hence 
    $
    Z(q)=2q+8q^2+28q^3+80q^4+212q^5+512q^6+1176q^7+O(q^8)
    $.
\end{example}
\begin{example} \label{example:321321}
   For more interesting $e$ and $f$ vectors computer evidence suggests that we obtain series of number theoretical significance, consistent with physics expectations. Here are two examples. 
   \begin{itemize}
    \item For $e=f=(3,2,1)$ the series in the large parentheses is
    \[ Z_0(q)=1+7q+8q^2+18q^3+14q^{4}+31q^{5}+20q^{6}+36q^{7}+O(q^8)
    \]
    By Theorem~\ref{thm:mod}, the coefficients are exactly $\sigma(3n+1)$ ($\sigma$ is the sum-of-divisors function). This series---up to a reparametrization---is a modular form for the congruence subgroup $\Gamma_0(9)$ of weight 2 (cf. A144614 of \cite{OEIS}). The infinite product part is equal to $q^{\frac{9}{24}}\eta(q)^{-9}$ where $\eta(q)=\eta(e^{2 \pi i \tau})$ is the Dedekind $\eta$-function. Therefore, the series
    \[
    Z(q)=1+16q+125 q^2+723 q^3+ 3428 q^{4}+ 14167 q^{5} + 52679 q^{6}+O(q^7)
    \]
    is a meromorphic modular form.
    \item The $e=(-1,-1,-1)$, $f=(-2,-1,0)$ example is similar: we have
    \[ 
    Z_0(q)=3q^3+6q^4+15q^5+12q^6+24q^{7}+18q^{8}+42q^{9}+24q^{10}+O(q^{11})
    \]
    whose coefficients are $\sigma(3n+2)$.
\end{itemize}
\end{example}

\begin{remark} \rm
  It is easy to verify that the effects of [D5-swap], [move-1], or [move-2] on the series $Z(q)$ are inessential: $Z(q)$ gets multiplied by a $q$-power ($q^0$, $q^{e_1}$, $q^{f_m}$, respectively). 
\end{remark}


\begin{remark} 
If $e=(0,\ldots,0)$ then $\mathcal{M}(d,e,f)$ is a quiver variety of affine type A such that the framing vector has one non-0 component. Various formulas are know for $Z(q)$ in this case. For example, consider one more summation over the possible dimension vectors $f$ 
\[  \sum_{\substack{f \in \ZZ^{m} \\ \sum_jf_j =0}}  \sum_{\substack{c \in \ZZ^{nm} \\ \sum_j c_{ij}=0 \\ \sum_{i}c_{ij}= f_j }}q^{\sum_{i,j} c_{ij}(c_{ij}-1)/2}.
\]
This is equal to
\[
\sum_{\substack{c \in \ZZ^{nm} \\ \sum_j c_{ij}=0}}
q^{\sum_{i,j} c_{ij}(c_{ij}-1)/2}
 = \sum_{\substack{l \in \ZZ^{n(m-1)}}} q^{
\sum_{i=1}^{n} \frac{1}{2}\langle l_i,  l_i\rangle_{\Delta}}
\]
where the inner product is taken with respect to the Cartan matrix of finite type $A_{m-1}$; see eg. \cite[Section~3]{gyenge2017enumeration}, cf. \cite[Corollary~4.12]{fujii2017combinatorial}.
\end{remark}

\section{Equivariant K-theory of the tangent space}
\label{sec:equivK}

Recall that $T=\C^*_{t_1} \times \C^*_{t_2} \times \left(\C^*\right)^{\text{D5 branes}}$, and correspondingly, we have 
\[
K_T(\pt)=\Z[t_1^{\pm1},t_2^{\pm 1},u_1^{\pm 1},u_2^{\pm 1},\ldots, u_n^{\pm 1}].\]
The tangent space of a $T$-variety at a fixed point represents an element in this ring. For example, the bow variety $\mathcal{M}(5,(0,3),(-1,4))$ (of dimension~6) has five torus fixed points. The tangent spaces at these five fixed points represent 
\begin{equation}\label{03-14-example}
\begin{array}{lll}
   (u_1u_2^{-1})\left( t_1^{-4}+t_1^{-3}t_2^{-1}+t_1^{-2}\right)& +
    (u_2u_1^{-1})\left(t_1^5t_2+t_1^4t_2^2+t_1^3t_2\right),&  \\
    (u_1u_2^{-1})\left( t_1^{-2}t_2^2+t_1^{-2}\right)& +
    (u_2u_1^{-1})\left(t_1^3t_2^{-1}+t_1^3t_2\right) & + (t_1t_2^{-1}+t_2^2),\\
    (u_1u_2^{-1})\left( t_1^{-1}t_2+t_1^{-2}\right)& +
    (u_2u_1^{-1})\left(t_1^2+t_1^3t_2 \right) & + (t_1^{-1}t_2+t_1^2),\\
    (u_1u_2^{-1})\left( t_1^{-2}+t_1^{-3}t_2\right)& +
    (u_2u_1^{-1})\left(t_1^3t_2+t_1^4\right) & + (t_1t_2^{-1}+t_2^2),\\
    (u_1u_2^{-1})\left( 2t_1^{-2}\right)& +
    (u_2u_1^{-1})\left(2 t_1^3t_2\right) & + (t_1^{-1}t_2+t_1^2)
\end{array}
\end{equation}
in $K_T(\pt)$. Our goal in this section is to develop combinatorial recipes to find such K-theory classes of tangent spaces at fixed points of $\mathcal{M}(d,e,f)$. We will obtain the result in two forms: Theorems~\ref{thm:sum01pairs} and~\ref{thm:tangentcharm}.
    
\subsection{The tangent space at a fixed point}

For each D3 brane $X$ the decomposition from Section~\ref{subsec:torus_fixed_points} restricted to $V_X$  gives rise to a decomposition
\[ V_X= \bigoplus_{i=1}^n V_{X,i}.\]
For $1 \leq \alpha, \beta \leq n$ we define
\[ \mathbb{M}_{\alpha,\beta}=\bigoplus_{E\; \mathrm{D5}} \mathbb{M}_{E,\alpha,\beta} \oplus \bigoplus_{F\; \mathrm{NS5}} \mathbb{M}_{F,\alpha,\beta}, \]
\[ \mathbb{N}_{\alpha,\beta}=\bigoplus_{E\; \mathrm{D5}} \mathbb{N}_{E,\alpha,\beta} \oplus \bigoplus_{X\; \mathrm{D3}} \mathbb{N}_{X,\alpha,\beta} \]
using
\[ 
\begin{aligned}
\mathbb{M}_{E,\alpha,\beta} =\; & \mathrm{Hom}(V_{E^+,\alpha},V_{E^-,\beta}) \oplus t_1t_2 \mathrm{Hom}(V_{E^+,\alpha},\CC_{E,\beta}) \\& \oplus \mathrm{Hom}(\CC_{E,\alpha},V_{E^-,\beta}) \oplus t_1t_2 \mathrm{Hom}(V_{E^-,\alpha},V_{E^-,\beta}) \oplus t_1t_2 \mathrm{Hom}(V_{E^+,\alpha},V_{E^+,\beta}), \\
\mathbb{M}_{F,\alpha,\beta}=\; &t_1\mathrm{Hom}(V_{F^+,\alpha},V_{F^-,\beta}) \oplus t_2\mathrm{Hom}(V_{F^-,\alpha},V_{F^+,\beta}) ,\\ 
 \mathbb{N}_{E,\alpha,\beta} =\; & t_1t_2\mathrm{Hom}(V_{E^+,\alpha},V_{E^-,\beta}),\\ 
 \mathbb{N}_{X,\alpha,\beta} =\; & t_1t_2\,\mathrm{Hom}(V_{X,\alpha},V_{X,\beta}) 
\end{aligned}
\]
where $E,F$ and $X$ are D5, NS5 and D3 branes respectively.

\begin{corollary} 
\label{cor:kertaumodimsigma}
The quotient $\mathrm{Ker}\, \tau / \mathrm{Im}\, \sigma$ decomposes as 
\[ \mathrm{Ker}\, \tau / \mathrm{Im}\, \sigma = \oplus_{\beta,\alpha} (\mathrm{Ker}\, \sigma_{\alpha,\beta}/ \mathrm{Im} \, \tau_{\beta,\alpha}) \u_{\beta}\u_{\alpha}^{-1} \]
where
\[ \bigoplus_{X\; \mathrm{D3}} \mathrm{Hom}(V_{X,\alpha},V_{X,\beta}) \xrightarrow{\sigma_{\beta,\alpha}}\mathbb{M}_{\alpha,\beta}\xrightarrow{\tau_{\beta,\alpha}=\begin{pmatrix} \tau_{\beta,\alpha,1} \\ \tau_{\beta,\alpha,2} \end{pmatrix}} \mathbb{N}_{\alpha,\beta}. \]
\end{corollary}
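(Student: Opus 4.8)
The plan is to exploit the $T$-equivariance of the three-term complex of Lemma~\ref{lem:complex_is_equivariant} together with the weight decomposition of Section~\ref{subsec:torus_fixed_points}, and then to observe that taking cohomology commutes with the resulting direct-sum decomposition. Recall that at a fixed point the homomorphism $\rho\colon T\to\mathcal{G}$ splits each $V_X$ as $V_X=\bigoplus_{i=1}^n V_{X,i}$ with $V_{X,i}=V_X\cap V^i$, and that each line $\CC_E$ occupies a single weight, namely the weight $u_i$ of the D5 brane $E=E_i$. Regarded (as in Section~\ref{subsec:torus_fixed_points}) as a $T^2$-module recorded by the single $(\C^*)^n$-weight $u_i$, the space $V_{X,i}$ has directional $(\C^*)^n$-weight $u_i$. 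Consequently every $\mathrm{Hom}$- and $\mathrm{End}$-building block occurring in $\bigoplus_X\mathrm{End}(V_X)$, in $\mathbb{M}$, and in $\mathbb{N}$ decomposes into pieces of the form $\mathrm{Hom}(\,\cdot_{\,\alpha},\cdot_{\,\beta})$, and the piece with source-index $\alpha$ and target-index $\beta$ carries the $(\C^*)^n$-weight $u_\beta u_\alpha^{-1}$.

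First I would decompose each of the three terms of the complex along these characters: matching the five summands of $\mathbb{M}_E$, the two of $\mathbb{M}_F$, and the single summands of $\mathbb{N}_E,\mathbb{N}_X$ term by term produces exactly the spaces $\mathbb{M}_{E,\alpha,\beta},\mathbb{M}_{F,\alpha,\beta},\mathbb{N}_{E,\alpha,\beta},\mathbb{N}_{X,\alpha,\beta}$ of the statement, with the $\CC_E$-factor contributing only in the slot where $\beta$ (resp.\ $\alpha$) equals the index of $E$. Since $\sigma$ and $\tau$ are $T$-equivariant by Lemma~\ref{lem:complex_is_equivariant}(1), they preserve this $(\C^*)^n$-grading and are therefore block-diagonal with respect to the decomposition indexed by the ordered pairs $(\alpha,\beta)$. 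This exhibits the whole complex as the direct sum over the $n^2$ pairs $(\alpha,\beta)$ of the subcomplexes
\[ \bigoplus_{X\;\mathrm{D3}}\mathrm{Hom}(V_{X,\alpha},V_{X,\beta})\xrightarrow{\sigma_{\beta,\alpha}}\mathbb{M}_{\alpha,\beta}\xrightarrow{\tau_{\beta,\alpha}}\mathbb{N}_{\alpha,\beta}, \]
where $\sigma_{\beta,\alpha}$ and $\tau_{\beta,\alpha}$ are the corresponding restrictions of $\sigma$ and $\tau$.

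Finally, cohomology commutes with finite direct sums, so $\mathrm{Ker}\,\tau/\mathrm{Im}\,\sigma=\bigoplus_{\alpha,\beta}\bigl(\mathrm{Ker}\,\tau_{\beta,\alpha}/\mathrm{Im}\,\sigma_{\beta,\alpha}\bigr)$, and pulling the common scalar $u_\beta u_\alpha^{-1}$ out of the $(\alpha,\beta)$-block gives the stated formula. The main obstacle is entirely the bookkeeping of the middle step: one must check that every structure map comprising $\sigma$ and $\tau$ respects the $(\alpha,\beta)$-grading, and the only delicate cases are the weight-shifting map $A_1$ (which moves the $s$-weight uniformly and is absorbed, as in Section~\ref{subsec:torus_fixed_points}, into the index defining $V^i$) and the couplings $a_E,b_E$ through the line $\CC_E$ (which, because $\CC_E$ occupies a single weight, contribute to exactly one $(\alpha,\beta)$-block). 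Once block-diagonality has been verified term by term, equivariance makes the rest automatic.
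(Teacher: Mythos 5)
Your proposal is correct and follows essentially the same route as the paper, whose proof is a one-line appeal to the $T$-equivariance of the three-term complex (Lemma~\ref{lem:complex_is_equivariant}), the equivariance of the moment map, and the weight decomposition $V_X=\bigoplus_i V_{X,i}$ from Section~\ref{subsec:torus_fixed_points}. You simply spell out the block-diagonality of $\sigma$ and $\tau$ with respect to the characters $\u_\beta\u_\alpha^{-1}$ and the fact that cohomology commutes with finite direct sums, which is exactly the intended argument.
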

\begin{proof} Follows from 
Lemma~\ref{lem:complex_is_equivariant}, the fact that the moment map is $T$-equivariant \cite[Lemma~3.2]{rimanyi2020bow},
and the construction of the bow variety.
\end{proof}

\subsection{Combinatorial codes of torus fixed points---extended Young diagrams}

We continue to assume that there are $n$ D5 branes and $m$ NS5 branes. In this section we present yet another description of torus fixed points of bow varieties, which will be useful in some of our calculations.

Let $M=(M_1,\dots,M_n)$ be a generalized Maya diagram with component $M_i$ corresponding to the D5 brane $E_i$. We can represent each row $M_i$ with a Young diagram type picture. 
Consider the set ${\mathbb Z}\times {\mathbb Z}$ of integer pairs, which we will visualize as a set of blocks on the plane rotated by $45$ degrees. Specifically, axis for the $t_1$-weight runs from top left to bottom right, while the axis for the $t_2$-weight runs from top right to bottom left.

Since we are only considering integer coordinates, vertical lines are of the form $(a-b) \mod m \equiv j$.
Blocks along the line $(a-b) \mod m \equiv 0$ will be further divided into two triangles vertically. In each such block the left half is labelled 0 while the right half is labelled 1.
The remaining blocks are labelled vertically with $m-1$ labels $2, \ldots, m$ as shown in the picture: the block at position $(i,j)$ is labelled with $(a-b)+1 \mod m$.

For example, the labeling in the top quadrant looks as follows:
\begin{center}
\begin{tikzpicture}[scale=0.6, font=\footnotesize, fill=black!20,rotate=45]
  \draw (0, -0.4) -- (0,7);
  \foreach \x in {1,2,4,5,6,7,8}
    {
      \draw (\x, 0) -- (\x,6.2);
    }
    \draw (-0.4,0) -- (9,0);
   \foreach \y in {1,2,4,5,6}
    {
         \draw (0,\y) -- (8.2,\y);
    }
    \draw (0,0) -- (2,2);
    \draw (6,0) -- (8,2);
    \draw (4,4) -- (6,6);
    \draw (0,5) -- (1,6);
    \draw (0.5,0.5) node {$0\phantom{..} 1$};
    \draw (1.5,0.5) node {2};
    \draw (4.5,0.5) node {$m$$-$$1$};
    \draw (5.5,0.5) node {$m$};
    \draw (0.5,1.5) node {$m$};
    \draw (1.5,1.5) node {$0 \phantom{..} 1$};
    \draw (4.5,1.5) node {$m$$-$$2$};
    \draw (5.5,1.5) node {$m$$-$$1$};
    \draw (6.5,0.5) node {$0 \phantom{..} 1$};
    \draw (7.5,0.5) node {2};
    \draw (6.5,1.5) node {$m$};
    \draw (7.5,1.5) node {$0\phantom{..} 1$};
    
    \draw (0.5,4.5) node {2};
    \draw (1.5,4.5) node {3};
    \draw (0.5,5.5) node {$0\phantom{..} 1$};
    \draw (1.5,5.5) node {2};
    \draw(0.5,3) node {\vdots};
    \draw(1.5,3) node {\vdots};
    \draw(3,0.5) node {\dots};
    \draw(8.75,0.5) node {\dots};
    \draw(0.5,6.75) node {\vdots};
\end{tikzpicture}
\end{center}
This diagonal labeling (or coloring) is a modification of the pattern of affine type~$\hat{A}$ \cite{gyenge2017enumeration}.
When drawing a diagram, we understand that its boxes are colored (labelled) according to the above pattern.



In the classical Young diagram literature the number $e_i$
is called the charge of $M_i$. When $e_i=0$ there is a well defined $A_{m-1}$-colored Young diagram corresponding to $M_i$; see e.g. \cite[Section~2.5]{nagao2009quiver} or \cite[Section~2.3]{fujii2017combinatorial}. We generalise this to to cases where $e_i$ is arbitrary. Set the base level of our diagram to the line $y=e_i$ (recall that the picture is rotated by 45 degrees):

\begin{center}
\begin{tabular}{c c c}
\begin{tikzpicture}[scale=0.5, font=\footnotesize, fill=black!20,rotate=45]
    \draw (0, -0.2) -- (0,4);
    \draw (-0.2, 0) -- (4,0);
    \draw[dashed] (0, 2) -- (2,2);
    \draw(-0.5,2) node {$e_i$};
\end{tikzpicture}&
\begin{tikzpicture}[scale=0.5, font=\footnotesize, fill=black!20,rotate=45,baseline=-0.3]
    \draw (0, -0.2) -- (0,4);
    \draw (-0.2, 0) -- (4,0);
    \draw(-0.5,0) node {$e_i$};
\end{tikzpicture}&
\begin{tikzpicture}[scale=0.5, font=\footnotesize, fill=black!20,rotate=45,baseline=-2]
    \draw (0, -2.2) -- (0,4);
    \draw (-0.2, 0) -- (4,0);
    \draw[dashed] (0, -2) -- (4,-2);
    \draw(-0.5,-2) node {$e_i$};
\end{tikzpicture} \\
$e_i < 0$ & $e_i = 0$ & $e_i > 0$
\end{tabular}
\end{center}

The diagram representing $M_i$ is drawn as follows:
\begin{itemize}
    \item Draw the Young diagram $Y_i$ associated with $M_i$ as in \cite[Section~2.5]{nagao2009quiver} but in the quadrant where $x \geq 0$ and $y \geq e_i$. 
    \item Extend this with the triangle 
    \begin{equation} 
    \label{eq:Tdef}
    T_i = T_{e_i} \coloneqq 
    \begin{cases}
    (0,0) \text{--} (e_i+1,0) \text{--} (e_i+1,e_i+1) & \quad  \text{if}\quad e_i < 0,\\
    (1,1) \text{--} (e_i,1) \text{--} (e_i,e_i) & \quad  \text{if}\quad e_i > 0.
    \end{cases}\end{equation}
\end{itemize}
In this way, we obtain a diagram \[B_i = Y_i+T_i\] where $Y_i$ and $T_i$ are the Young diagram and triangle parts drawn in steps (1) and (2), respectively. These pictures are \emph{extended Young diagrams}. 
Here are three typical examples:
\vspace{0.5cm}
\begin{center}
\begin{tabular}{c c c}
\begin{tikzpicture}[scale=0.5, font=\footnotesize, fill=black!20,rotate=45]
    \draw (0, -0.2) -- (0,4);
    \draw (-0.2, 0) -- (4,0);
    \draw[dashed] (0, 2) -- (2,2);
    \draw (0,3.6) -- (0.4,3.6) -- (0.4,3.2) -- (0.8,3.2) -- (0.8,2.4) -- (1.6,2.4) -- (1.6,2);
    \draw (1.6,2) -- (2,2) -- (0,0);
\end{tikzpicture}&
\begin{tikzpicture}[scale=0.5, font=\footnotesize, fill=black!20,rotate=45,baseline=-0.3]
    \draw (0, -0.2) -- (0,4);
    \draw (-0.2, 0) -- (4,0);
    \draw (0,1.6) -- (0.4,1.6) -- (0.4,1.2) -- (0.8,1.2) -- (0.8,0.4) -- (1.6,0.4) -- (1.6,0);
\end{tikzpicture}&
\begin{tikzpicture}[scale=0.5, font=\footnotesize, fill=black!20,rotate=45,baseline=-2]
    \draw (0, -2.2) -- (0,4);
    \draw (-0.2, 0) -- (4,0);
    \draw[dashed] (0, -2) -- (4,-2);
    \draw (0,-0.4) -- (0.4,-0.4) -- (0.4,-0.8) -- (0.8,-0.8) -- (0.8,-1.6) -- (1.6,-1.6) -- (1.6,-2);
    \draw (0, 0) -- (-2,-2) -- (0,-2); 
\end{tikzpicture} \\
$e_i < 0$ & $e_i = 0$ & $e_i > 0$
\end{tabular}
\end{center}
\vspace{0.3cm}
Note that the extended Young diagram $Y_i +T_i$ is uniquely determined by the pair $(Y_i,e_i)$.
\begin{corollary}
Maya diagrams $(M_1,\dots,M_n)$ correspond bijectively to $n$-tuples $(B_1,\dots,B_n)$ of extended Young diagrams.
\end{corollary}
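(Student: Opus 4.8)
The plan is to establish the bijection one row at a time and then assemble the rows, exploiting the fact that a generalized Maya diagram is by definition an $n$-tuple $(M_1,\dots,M_n)$ and that each extended Young diagram $B_i$ is built from $M_i$ alone, with the ambient charge $e_i$ fixed. It therefore suffices to show that for each $i$ the assignment $M_i \mapsto B_i$ is a bijection between the charge-$e_i$ rows and the extended Young diagrams whose triangle is $T_{e_i}$; since the map $(M_1,\dots,M_n)\mapsto(B_1,\dots,B_n)$ is just the componentwise application of these single-row maps, the full correspondence is their product over $i=1,\dots,n$.

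First I would recall the classical charge-$0$ case. When $e_i=0$, reading the $01$-sequence $M_i$ as the sequence of NE/SE steps of a lattice path produces the boundary profile of an $A_{m-1}$-colored Young diagram $Y_i$; this is the standard Maya--Young (Dirac-sea) correspondence recalled above, and it is a bijection onto all colored Young diagrams. Property (i) guarantees that $M_i$ is eventually $0$ on the left and eventually $1$ on the right, so the associated path is ultimately the two boundary rays of the positive quadrant and hence bounds a genuine finite Young diagram; conversely every colored Young diagram arises this way. The coloring is exactly the diagonal $\hat{A}$-labeling described before the statement.

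Next I would interpret the base-level shift and the triangle as implementing an arbitrary charge. For $e_i\neq 0$ the drawing procedure sets the base level to $y=e_i$ and glues in the triangle $T_{e_i}$ of \eqref{eq:Tdef}. I would verify that this triangle is precisely the region between the charge-$0$ base ray and the charge-$e_i$ base ray, so that the boundary profile of $B_i=Y_i+T_{e_i}$ is the lattice path whose step sequence is $M_i$, now read with charge $e_i$; this matches property (ii), which identifies $e_i$ with the excess of $0$'s over $1$'s in the stable regions. The map $M_i\mapsto B_i$ is then injective because the step sequence, hence $M_i$, can be read back off the boundary of $B_i$, and surjective because any extended Young diagram with triangle $T_{e_i}$ has such a stabilizing step sequence of charge $e_i$. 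As already noted in the text, $B_i=Y_i+T_{e_i}$ is uniquely determined by the pair $(Y_i,e_i)$, and conversely $e_i$ is recovered from the triangle and $Y_i$ from the remainder, giving $M_i \leftrightarrow (Y_i,e_i) \leftrightarrow B_i$.

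The only step requiring genuine care is the bookkeeping in this last paragraph: one must confirm that the $0/1$ conventions for positive versus negative blocks, the placement of the base level at $y=e_i$, and the two shapes of $T_{e_i}$ in the cases $e_i<0$ and $e_i>0$ all fit together so that the charge computed from the profile agrees with property (ii). This is a finite, convention-level check rather than a conceptual difficulty; once it is confirmed, the bijection for each row, and hence the product bijection $(M_1,\dots,M_n) \leftrightarrow (B_1,\dots,B_n)$, follows at once.
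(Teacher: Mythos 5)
Your proposal is correct and follows essentially the same route the paper intends: the paper leaves this corollary unproved because it is immediate from the construction, having just noted that $B_i=Y_i+T_i$ is determined by the pair $(Y_i,e_i)$, and your row-by-row argument via $M_i\leftrightarrow(Y_i,e_i)\leftrightarrow B_i$ (classical charge-$0$ Maya--Young correspondence plus the base-level shift and triangle encoding the charge) is exactly that implicit reasoning, spelled out. No gaps.
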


Fix an extended Young diagram $B=Y+T$. This diagram corresponds to a brane configuration with a single D5 brane and $m$ NS5 branes. The associated bow variety has exactly $m+1$ D3 branes, labelled by $0, \dots,m$. Denote by $B^l$ the blocks in $B$ on the diagonal $a- b =l$. The vector space associated with the $j$-th D3 brane is then
\[ V_j= \bigoplus_{k \in \Z}V_{j,\frac{2k-1}{2}}\]
where 
\[V_{j,\frac{2k-1}{2}} =\sum_{s \in B^{m+j-1}} t_1^{-s_1}t_2^{-s_2} \]
if $(j,k) \not\in \{(0,0),(1,1)\}$. For $(j,k) \in \{(0,0),(1,1)\}$ the corresponding spaces are
\[ V_{0,\frac{-1}{2}}=\sum_{s \in B^{0}} t_1^{-s_1}t_2^{-s_2} \quad \textrm{and} \quad V_{1,\frac{1}{2}}=\sum_{s \in Y^{0}} t_1^{-s_1}t_2^{-s_2} \]
if $e \leq 0$
or
\[ V_{0,\frac{-1}{2}}=\sum_{s \in Y^{0}} t_1^{-s_1}t_2^{-s_2} \quad \textrm{and} \quad V_{1,\frac{1}{2}}=\sum_{s \in B^{0}} t_1^{-s_1}t_2^{-s_2} \]
if $e \geq 0$.
The dimension of $V_j$, $0 \leq j \leq m$ is then
\[  \mathrm{wt}_j(B) \coloneqq \dim V_j\]
which can also be read off from $B$ as the total number of half or full blocks of label $j$ in $B$. 

More generally, for a Maya diagram $M=(M_1,\dots,M_n)$ with $n$ D5 branes the dimension of $V_j$ is
\[ d_j=\dim V_j = \sum_{i=1}^n\mathrm{wt}_j(B_i).\]


\subsection{Torus characters of the tangent space}

\subsubsection{Case of one NS5 brane}
In this section, we assume $m=1$. 
We have observed that if the charge $e_i$ of the row $M_i$ of a Maya diagram is zero, the corresponding extended Young diagram is just a standard Young diagram. When all $e_i=0$, then the bow variety becomes a quiver variety. We will reduce the computation of the characters for an arbitrary extended Young diagram inductively to this quivers case by gradually decreasing $\sum_i|e_i|$ until it reaches zero.

If the charge $e_i < 0$,
we can write an extended Young diagram $B_i$ as
\[ B_i = (0,0)+(-1,-1)+\dots+(e_i+1,e_i+1) + (-1,0)\cdot\overline{B}_i\]
where $\overline{B}_i$ is an extended Young diagram with charge $\overline{e}_i=e_i+1$. Then $Y_i=(-1,0) \cdot \overline{Y}_i$ for the Young diagram parts, and the monomials
\[ L_0=1+\dots+t_1^{e_i+1}t_2^{e_i+1} \]
resp.
\[ L_1=t_1^{-1}(1+\dots+t_1^{e_i+2}t_2^{e_i+2})\]
span the half-boxes which were deleted from $B_i$.
Similarly, if $e_i > 0$, write $B_i$ as
\[ B_i= (1,1)+(2,2)+\dots+(e_i,e_i) + (1,0)\cdot\overline{B}_i\]
where $\overline{B}$ is an extended Young diagram with charge $\overline{e}_i=e_i-1$. In this case $Y_i=(1,0) \cdot \overline{Y}_i$, and the monomials
\[ L_1=t_1t_2+\dots+t_1^{e_i}t_2^{e_i} \]
resp.
\[ L_0=t_1(t_1t_2+\dots+t_1^{e_i-1}t_2^{e_i-1})\]
span the half-boxes removed from $B_i$.


Fix $1 \leq \alpha, \beta \leq n$ with diagrams $B_{\alpha} = Y_{\alpha}+T_{\alpha}$ and $B_{\beta} = Y_{\beta}+T_{\beta}$.
Denote by
\[ N_{\alpha,\beta}(t_1,t_2) \coloneqq \mathrm{Ker}\, \sigma_{\beta, \alpha}/ \mathrm{Im} \, \tau_{\beta,\alpha},\]
the contribution of the pair $(B_{\alpha},B_{\beta})$ to the character in Corollary~\ref{cor:kertaumodimsigma}.
Denote by $N_{\overline{\alpha},\beta}(t_1,t_2)$, resp. $N_{\alpha,\overline{\beta}}(t_1,t_2)$ the same quotient for the pair $(\overline{B}_{\alpha},B_{\beta})$, resp. $(B_\alpha,\overline{B}_\beta)$.

\begin{lemma}[Reduction lemma] Let $m=1$. For $\alpha \leq \beta$,
\[ N_{\alpha,\beta}=
\begin{cases}
   t_1 N_{\overline{\alpha},\beta} + t_1t_2L_{1,\alpha}^{\ast} + t_1^{-e_{\alpha}}t_2^{-e_{\alpha}-1}(V_{1,\beta}-V_{0,\beta}) & \textrm{if} \quad e_{\alpha} < 0, \\
   t_1^{-1} N_{\overline{\alpha},\beta} + t_1t_2L_{1,\alpha}^{\ast} + t_1^{-e_{\alpha}}t_2^{-e_{\alpha}}(V_{0,\beta}-V_{1,\beta}) & \textrm{if} \quad e_{\alpha} > 0, \\
   t_1^{-1} N_{\alpha,\overline{\beta}} + L_{0,\beta} + t_1^{e_{\beta}+1}t_2^{e_{\beta}+1}(V_{1,\alpha}^{\ast}-V_{0,\alpha}^{\ast}) & \textrm{if} \quad e_{\beta} < 0, \\
   t_1 N_{\alpha,\overline{\beta}} + L_{0,\beta} + t_1^{e_{\beta}+1}t_2^{e_{\beta}}(V_{0,\alpha}^{\ast}-V_{1,\alpha}^{\ast}) & \textrm{if} \quad e_{\beta} > 0.
\end{cases}\]
For $\alpha \geq \beta$,
\[ N_{\alpha,\beta}=
\begin{cases}
   t_1 N_{\overline{\alpha},\beta} + t_1t_2L_{0,\alpha}^{\ast} + t_1^{-e_{\alpha}}t_2^{-e_{\alpha}}(V_{1,\beta}-V_{0,\beta}) & \textrm{if} \quad e_{\alpha} < 0, \\
   t_1^{-1} N_{\overline{\alpha},\beta} + t_1t_2L_{0,\alpha}^{\ast} + t_1^{-e_{\alpha}}t_2^{-e_{\alpha}+1}(V_{0,\beta}-V_{1,\beta}) & \textrm{if} \quad e_{\alpha} > 0, \\
   t_1^{-1} N_{\alpha,\overline{\beta}} + L_{1,\beta} + t_1^{e_{\beta}+1}t_2^{e_{\beta}+2}(V_{1,\alpha}^{\ast}-V_{0,\alpha}^{\ast}) & \textrm{if} \quad e_{\beta} < 0, \\
   t_1 N_{\alpha,\overline{\beta}} + L_{1,\beta} + t_1^{e_{\beta}+1}t_2^{e_{\beta}+1}(V_{0,\alpha}^{\ast}-V_{1,\alpha}^{\ast}) & \textrm{if} \quad e_{\beta} > 0.
\end{cases}\]
\end{lemma}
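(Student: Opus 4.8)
The plan is to turn the lemma into an identity in $K_T(\pt)$ and then substitute the geometric reduction of the extended Young diagrams. First I would invoke Lemma~\ref{lem:complex_is_equivariant} together with the smoothness of $\mathcal M$: the map $\sigma$ is injective and $\tau$ is surjective, and since the decomposition of Corollary~\ref{cor:kertaumodimsigma} is $T$-equivariant these properties pass to each summand (a weight piece of an equivariant injection, resp. surjection, is again injective, resp. surjective). Hence the class of the middle cohomology is the alternating sum
\[ N_{\alpha,\beta}=[\mathbb M_{\alpha,\beta}]-\Big[\bigoplus_{X\,\mathrm{D3}}\mathrm{Hom}(V_{X,\alpha},V_{X,\beta})\Big]-[\mathbb N_{\alpha,\beta}]. \]
For $m=1$ there are exactly two D3 branes $V_0,V_1$, one NS5 brane and one D5 brane, so reading off the five summands of $\mathbb M_{E,\alpha,\beta}$, the two of $\mathbb M_{F,\alpha,\beta}$ and subtracting the source and the $\mathbb N$-terms expresses $N_{\alpha,\beta}$ as an explicit $\Z[t_1^{\pm1},t_2^{\pm1}]$-bilinear combination of the products $V_{i,\alpha}^\ast V_{j,\beta}$, $\mathbb C_{E,\alpha}^\ast V_{j,\beta}$ and $V_{i,\alpha}^\ast\mathbb C_{E,\beta}$ (here $P^\ast$ denotes the class with all torus weights inverted, coming from $\mathrm{Hom}(P,-)$).

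Next I would substitute the reduction of the diagram. For $e_\alpha<0$ the decomposition $B_\alpha=(0,0)+(-1,-1)+\dots+(e_\alpha+1,e_\alpha+1)+(-1,0)\cdot\overline B_\alpha$ gives, at the level of characters,
\[ V_{0,\alpha}=L_{0,\alpha}+t_1V_{0,\overline\alpha}, \qquad V_{1,\alpha}=L_{1,\alpha}+t_1V_{1,\overline\alpha}, \]
where the monomial $t_1$ records the $(-1,0)$-shift and $L_{0,\alpha},L_{1,\alpha}$ are the label-$0$ and label-$1$ halves of the removed diagonal strip; I would also record how the stability weight of $\mathbb C_{E,\alpha}$ shifts as the base level passes from $e_\alpha$ to $e_\alpha+1$. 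Plugging these (and their duals) into the bilinear expression and grouping the terms that involve only the reduced classes $V_{j,\overline\alpha}$ reassembles them, since $N_{\overline\alpha,\beta}$ has the identical bilinear shape, into $t_1N_{\overline\alpha,\beta}$ --- the overall monomial being the net twist that the shift imposes on the $\alpha$-slot. The case $e_\alpha>0$ is the same with the shift $(1,0)$, producing instead the leading factor $t_1^{-1}$.

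What survives are the correction terms: a combination, bilinear in the truncated geometric series $L_{0,\alpha},L_{1,\alpha}$ and in $V_{0,\beta},V_{1,\beta},\mathbb C_{E,\beta}$, that I would collapse using the telescoping identities for $L_0,L_1$ (multiplying by $1-t_1t_2$ turns each into a two-term difference of corner monomials) together with the now-explicit weight of $\mathbb C_{E,\beta}$. The ``diagonal'' part collapses to $t_1t_2L_{1,\alpha}^\ast$ and the framing cross-terms telescope to $t_1^{-e_\alpha}t_2^{-e_\alpha-1}(V_{1,\beta}-V_{0,\beta})$, which is the first line of the first display. Reducing $B_\beta$ instead of $B_\alpha$ gives the last two lines by the same computation with the roles of $\alpha,\beta$ and of $L_0,L_1$ transposed, the transposition being forced by the asymmetry between the summands $\mathrm{Hom}(V_{E^+,\alpha},\mathbb C_{E,\beta})$ and $\mathrm{Hom}(\mathbb C_{E,\alpha},V_{E^-,\beta})$ of $\mathbb M_E$. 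Finally, the dichotomy $\alpha\le\beta$ versus $\alpha\ge\beta$ is governed by the position of the framing weight of $\mathbb C_{E,\gamma}$ relative to the boxes of the partner diagram: it determines which half-series $L_0$ or $L_1$ survives and shifts the surviving $t_2$-exponent by one, yielding the two separate tables.

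The main difficulty is not conceptual but the precise bookkeeping of conventions: one must fix the rotated coordinate and the label ($0$ or $1$) of every half-box, determine exactly which torus weight the stability condition assigns to $\mathbb C_{E,\gamma}$, and track how both transform under the $(\mp1,0)$-shift, then check that the framing cross-terms telescope to exactly the stated corner monomials (e.g. $t_1^{-e_\alpha}t_2^{-e_\alpha-1}$) in all eight cases. I expect the cleanest route is to keep $L_{0,\alpha},L_{1,\alpha}$ as explicit finite geometric series in $t_1t_2$, verify the telescoping on the two boundary boxes, and confirm the leading factor $t_1^{\pm1}$ and the $L_0\leftrightarrow L_1$ exchange on a small example before asserting the general identity; the reassembly into $t_1^{\pm1}N_{\overline\alpha,\beta}$ is then automatic from the matching bilinear form.
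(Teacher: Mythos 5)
Your proposal follows the paper's proof essentially verbatim: the paper likewise writes $N_{\alpha,\beta}$ as the alternating sum of the three terms of the complex restricted to the $(\alpha,\beta)$-summand (display \eqref{eq:Nalphabetam1}), substitutes the character decomposition $V_{j,\alpha}=L_{j,\alpha}+t_1^{-1}\overline V_{j,\alpha}$ coming from peeling off the diagonal strip, reassembles the reduced part into $t_1 N_{\overline\alpha,\beta}$, and telescopes the leftover $L$-bilinear correction terms into the stated corner monomials times $(V_{1,\beta}-V_{0,\beta})$. The only bookkeeping point to settle is the direction of the shift monomial --- with the paper's convention the $\overline V$-part of $V_{j,\alpha}$ carries $t_1^{-1}$, so that dualizing the $\alpha$-slot yields the leading factor $t_1$ (not $t_1^{-1}$) when $e_\alpha<0$ --- which is exactly the kind of convention check you already flag; otherwise your outline is the paper's argument.
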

\begin{proof}
We just prove the first case out of the eight; the remaining cases are similar. That is, we assume that $\alpha \leq \beta$, $e_{\alpha} <0$  and we perform the reduction
\[ (B_{\alpha},B_{\beta}) \to (\overline{B_{\alpha}},B_{\beta}).\]
The total tangent space $\mathrm{Ker}\, \sigma/ \mathrm{Im} \, \tau$ can be expressed as
\[
\begin{aligned}
N(t_1,t_2) =\ & \sum_{i=1}^n\left((1-t_1t_2)V_{i}^{\ast}\otimes V_{i-1}+ t_1t_2V_{i}^{\ast}+V_{i-1} +t_1t_2 V_{i-1}^{\ast}\otimes V_{i-1} + t_1t_2 V_{i}^{\ast}\otimes V_{i}\right) \\
& + \left(t_1 V_{1}^{\ast}\otimes V_{0}+t_2 V_{0}^{\ast}\otimes V_{1}\right) 
 - \sum_{l=0}^{n} (1+t_1t_2)V_{l}^{\ast} \otimes V_{l}.
\end{aligned}\]
By the considerations of Section~\ref{subsec:torus_fixed_points} we have for $0 \leq i \leq n$ that
\[ V_i=V_{1,1}\u_1+\dots+ V_{1,i}\u_i+V_{0,i+1}\u_{i+1}+\dots+V_{0,n}\u_n.\]
Therefore,
\begin{equation}
\label{eq:Nalphabetam1}
\begin{aligned}
N_{\alpha,\beta} =\ & (1-t_1t_2)V_{1,\alpha}^{\ast}\otimes V_{0,\beta}+ t_1t_2V_{1,\alpha}^{\ast}+V_{0,\beta} + t_1 V_{0,\alpha}^{\ast}\otimes V_{1,\beta}+t_2 V_{1,\alpha}^{\ast}\otimes V_{0,\beta}  \\ 
& - V_{0,\alpha}^{\ast}\otimes V_{0,\beta} - V_{1,\alpha}^{\ast}\otimes V_{1,\beta}.
\end{aligned}\end{equation}
By our construction, we have that
\[ V_{0,\alpha}=L_{0,\alpha}+t_1^{-1}\overline{V}_{0,\alpha} \quad \textrm{and}\quad V_{1,\alpha}=L_{1,\alpha}+t_1^{-1}\overline{V}_{1,\alpha}.\]
Substituting these into \eqref{eq:Nalphabetam1}, we recover $t_1 N_{\overline{\alpha},\beta}$ plus the additional terms
\[
 (1- t_1)V_{0,\beta}+(1+t_2-t_1t_2)L_{1,\alpha}^{\ast} \otimes V_{0,\beta}+t_1t_2L_{1,\alpha}^{\ast} + t_1 L_{0,\alpha}^{\ast} \otimes V_{1,\beta}-L_{0,\alpha}^{\ast} \otimes V_{0,\beta}-L_{1,\alpha}^{\ast} \otimes V_{1,\beta}.
\]
The coefficient of $V_{1,\beta}$ in this expression is
\[ t_1L_{0,\alpha}^{\ast}-L_{1,\alpha}^{\ast}=t_1^{e_{\alpha}}t_2^{e_{\alpha}-1}.\]
Similarly, the coefficient of $V_{0,\beta}$ is
\[ 1-t_1+L_{1,\alpha}^{\ast}+t_2L_{1,\alpha}^{\ast}-t_1t_2L_{1,\alpha}^{\ast}-L_{0,\alpha}^{\ast}=-t_1^{e_{\alpha}}t_2^{e_{\alpha}-1}.\]
Collecting these terms together and noting that there remains a term of $t_1t_2L_{1,\alpha}^{\ast}$ we obtain the claim.
\end{proof}

We continue to assume that $m=1$. Denote by $N_{Y_{\alpha},Y_{\beta}}$ the contribution to the character of the Young diagram parts of $B_{\alpha}$ and $B_{\beta}$.
Define 
\begin{equation} 
\label{eq:Rdef}
R_{\alpha,\beta}^{e_{\alpha},e_{\beta}} \coloneqq 
\begin{cases}
T_{e_{\beta}-e_{\alpha}} & \textrm{if} \quad \alpha \leq \beta \textrm{ and } e_{\alpha} \geq e_{\beta}, \\
t_1T_{e_{\beta}-e_{\alpha}-1} & \textrm{if} \quad \alpha \leq \beta \textrm{ and } e_{\alpha} < e_{\beta}, \\
T_{e_{\beta}-e_{\alpha}} & \textrm{if} \quad \alpha \geq \beta \textrm{ and } e_{\alpha} \leq e_{\beta}, \\
t_1^{-1}T_{e_{\beta}-e_{\alpha}+1} & \textrm{if} \quad \alpha \geq \beta \textrm{ and } e_{\alpha} > e_{\beta}
\end{cases}
\end{equation}
where $T_i$ is defined in \eqref{eq:Tdef}. 

\begin{proposition}
\label{prop:Nalphabetared}
\[N_{\alpha,\beta}=t_1^{e_{\beta}-e_{\alpha}}N_{Y_{\alpha},Y_{\beta}}+R_{\alpha,\beta}^{e_{\alpha},e_{\beta}}\]
\end{proposition}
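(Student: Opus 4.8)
The plan is to prove the identity by induction on $|e_\alpha|+|e_\beta|$, using the Reduction lemma as the inductive engine and keeping the Young-diagram contribution $N_{Y_\alpha,Y_\beta}$ and the triangle contribution $R_{\alpha,\beta}^{e_\alpha,e_\beta}$ strictly separate. Throughout I use that $N_{Y_\alpha,Y_\beta}$ is the $e=0$ (quiver) contribution attached to the two partition \emph{shapes} $Y_\alpha,Y_\beta$: it is computed with both diagrams placed at base level $0$ and therefore depends only on the shapes, not on any charge.

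\emph{Base case.} If $e_\alpha=e_\beta=0$ then the triangles $T_{e_\alpha},T_{e_\beta}$ are empty and each extended Young diagram coincides with its classical Young diagram, so $N_{\alpha,\beta}=N_{Y_\alpha,Y_\beta}$ by definition. On the right-hand side $t_1^{e_\beta-e_\alpha}=1$, and \eqref{eq:Rdef} gives $R_{\alpha,\beta}^{0,0}=T_0=0$ because the triangle $T_0$ of \eqref{eq:Tdef} has no boxes. The two sides agree.

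\emph{Inductive step.} If $|e_\alpha|+|e_\beta|>0$, choose a nonzero charge and apply the matching line of the Reduction lemma; this passes to a diagram whose relevant charge has absolute value one smaller, so the induction is well founded. Take for concreteness $\alpha\le\beta$ and $e_\alpha<0$, where the Reduction lemma reads
\[
N_{\alpha,\beta}=t_1N_{\overline{\alpha},\beta}+t_1t_2L_{1,\alpha}^{\ast}+t_1^{-e_\alpha}t_2^{-e_\alpha-1}(V_{1,\beta}-V_{0,\beta}).
\]
Since $\overline{Y}_\alpha$ has the same shape as $Y_\alpha$, the reference quantity is unchanged, $N_{\overline{Y}_\alpha,Y_\beta}=N_{Y_\alpha,Y_\beta}$, so the induction hypothesis applied to $(\overline{B}_\alpha,B_\beta)$ with $\overline{e}_\alpha=e_\alpha+1$ gives
\[
t_1N_{\overline{\alpha},\beta}=t_1\bigl(t_1^{e_\beta-e_\alpha-1}N_{Y_\alpha,Y_\beta}+R_{\alpha,\beta}^{e_\alpha+1,e_\beta}\bigr)=t_1^{e_\beta-e_\alpha}N_{Y_\alpha,Y_\beta}+t_1R_{\alpha,\beta}^{e_\alpha+1,e_\beta}.
\]
The Young-diagram term thus appears with exactly the asserted power of $t_1$, and the whole proposition collapses to the purely combinatorial triangle recursion
\[
R_{\alpha,\beta}^{e_\alpha,e_\beta}=t_1R_{\alpha,\beta}^{e_\alpha+1,e_\beta}+t_1t_2L_{1,\alpha}^{\ast}+t_1^{-e_\alpha}t_2^{-e_\alpha-1}(V_{1,\beta}-V_{0,\beta}),
\]
together with its seven analogues coming from the other lines of the lemma.

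\emph{The main obstacle} is verifying these triangle recursions, and the key sub-step is to recognise each $V$-difference as a single diagonal slice of a triangle: in $V_{1,\beta}-V_{0,\beta}$ the Young-diagram parts cancel, leaving only the diagonal-$0$ slice of the triangle $T_{e_\beta}$ of $B_\beta$, and similarly for $V_{0,\alpha}^{\ast}-V_{1,\alpha}^{\ast}$ in the $\beta$-reductions. Feeding this, together with the explicit monomials $L_{0,\alpha},L_{1,\alpha}$, into the right-hand side, one checks that the pieces telescope: multiplying $T_{e_\beta-e_\alpha-1}$ by $t_1$ and adjoining the extra row supplied by the $L$- and $V$-difference terms reproduces exactly $T_{e_\beta-e_\alpha}$ as defined in \eqref{eq:Tdef}, which is the value of $R_{\alpha,\beta}^{e_\alpha,e_\beta}$ dictated by \eqref{eq:Rdef}. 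I expect the difficulty to be organisational rather than conceptual: one must track the branch of \eqref{eq:Rdef} selected by the sign of $e_\beta-e_\alpha$ and the sign of the reduced charge. The number of genuinely distinct computations can be roughly halved by exploiting the symmetry exchanging the roles of $\alpha$ and $\beta$ (together with the corresponding duality on monomials), so that the four ``$\alpha$'' lines and four ``$\beta$'' lines need not all be treated independently.
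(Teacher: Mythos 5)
Your proposal is correct and follows essentially the same route as the paper: both arguments rest on iterating the Reduction lemma to strip the charges down to zero, using that $N_{Y_\alpha,Y_\beta}$ depends only on the shapes, and then identifying the accumulated correction terms (the $L$-monomials and the $V$-differences, which are supported on the diagonal slices of the triangles) with $R_{\alpha,\beta}^{e_\alpha,e_\beta}$. The only difference is bookkeeping — the paper unrolls the reduction fully on $B_\alpha$ and then on $B_\beta$, collecting the corrections into $T_{1,\alpha}$ and $T_{0,\beta}$, whereas you phrase the same computation as a one-step induction with a triangle recursion.
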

\begin{proof}
Again, the four cases are similar, so we just prove the first. Perform reduction first, say, on $B_{\alpha}$ until its charge is equal to zero.
This gives
\[ N_{\alpha,\beta}=t_1^{-e_{\alpha}}N_{Y_{\alpha},\beta}+t_1t_2T_{1,\alpha}^{\ast}+t_1^{-e_{\alpha}}\cdot \left(\begin{array}{cc}
    -1-\dots-t_2^{-e_{\alpha}-1} & \quad\textrm{if}\quad e_{\alpha} < 0  \\
    t_2^{-1}+\dots+t_2^{-e_{\alpha}} & \quad\textrm{if}\quad e_{\alpha} > 0
\end{array} \right) \cdot(V_{0,\beta}-V_{1,\beta})\]
where $Y_{\alpha}$ is the Young diagram part of $B_{\alpha}$, and \[T_{1,\alpha}=L_{1,\alpha}+t_1^{-1}L_{1,\overline{\alpha}}+\dots\] is the space corresponding to the first D3 brane in the triangle part of $B_{\alpha}$. Second, perform reduction on $B_{\beta}$ again until its charge is equal to zero. Note that at this step the charge of $\alpha$ is already zero, so we obtain
\[ 
\begin{multlined}
N_{\alpha,\beta}=t_1^{e_{\beta}-e_{\alpha}}N_{Y_{\alpha},Y_\beta}+t_1t_2T_{1,\alpha}^{\ast}+t_1^{-e_{\alpha}}T_{0,\beta} \\+t_1^{-e_{\alpha}}\cdot \left(\begin{array}{cc}
    -1-\dots-t_2^{-e_{\alpha}-1} & \quad\textrm{if}\quad e_{\alpha} < 0  \\
    t_2^{-1}+\dots+t_2^{-e_{\alpha}} & \quad\textrm{if}\quad e_{\alpha} > 0
\end{array} \right) \cdot(V_{0,\beta}-V_{1,\beta})
\end{multlined}
\]
where 
\[T_{0,\beta}=L_{0,\beta}+t_1^{-1}L_{0,\overline{\beta}}+\dots\] is the space corresponding to the zeroth D3 brane in the triangle part of $B_{\beta}$. The claim then follows by observing that $T_{e_{\beta}-e_{\alpha}}$ is equal to
\[
t_1t_2T_{1,\alpha}^{\ast}+t_1^{-e_{\alpha}}T_{0,\beta} +t_1^{-e_{\alpha}} \cdot \left(\begin{array}{cc}
    -1-\dots-t_2^{-e_{\alpha}-1} & \quad\textrm{if}\quad e_{\alpha} < 0  \\
    t_2^{-1}+\dots+t_2^{-e_{\alpha}} & \quad\textrm{if}\quad e_{\alpha} > 0
\end{array} \right)\cdot (V_{0,\beta}-V_{1,\beta})\]
\end{proof}

For a block $s$ sitting at the $i$th row and $j$th column of a Young diagram $Y$ the leg and arm lengths respectively are defined as
\[ l_Y(s)=Y_i-j \quad\textrm{and}\quad a_Y(s)=Y_j'-i\]
where $Y'$ is the transpose of $Y$.
\begin{proposition} 
\label{prop:Nalphabetam1}
If $m=1$,
\[
\begin{multlined}
N_{\alpha,\beta}(t_1,t_2)= \\ t_1^{e_{\beta}-e_{\alpha}}\left\{\sum_{s \in Y_{\alpha}} \left(t_1^{-l_{Y_{\beta}}(s)}t_2^{a_{Y_{\alpha}}(s)+1}\right) + \sum_{t \in Y_{\beta}}\left(t_1^{l_{Y_{\alpha}}(t)+1}t_2^{-a_{Y_{\beta}}(t)}\right)\right\}+\sum_{(s_1,s_2) \in R_{\alpha,\beta}^{e_{\alpha},e_{\beta}}} t_1^{s_1}t_2^{s_2}
\end{multlined}
\]
\end{proposition}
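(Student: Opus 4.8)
The plan is to deduce the formula from Proposition~\ref{prop:Nalphabetared}, so that the entire content reduces to evaluating the charge-zero contribution $N_{Y_\alpha,Y_\beta}$ and invoking a classical arm--leg identity. Indeed, Proposition~\ref{prop:Nalphabetared} already supplies
\[
N_{\alpha,\beta}=t_1^{e_{\beta}-e_{\alpha}}N_{Y_{\alpha},Y_{\beta}}+R_{\alpha,\beta}^{e_{\alpha},e_{\beta}},
\]
and the second summand is, by construction, the character $\sum_{(s_1,s_2)\in R_{\alpha,\beta}^{e_{\alpha},e_{\beta}}}t_1^{s_1}t_2^{s_2}$ of the triangle region $R_{\alpha,\beta}^{e_{\alpha},e_{\beta}}$ of \eqref{eq:Rdef}. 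Thus it suffices to identify $N_{Y_\alpha,Y_\beta}$ with the bracketed sum.

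First I would compute $N_{Y_\alpha,Y_\beta}$ explicitly. When both charges vanish the triangle parts are empty, so $B_\gamma=Y_\gamma$, the two D3-brane spaces coincide, and $V_{0,\gamma}=V_{1,\gamma}=V_{Y_\gamma}:=\sum_{s\in Y_\gamma}t_1^{-s_1}t_2^{-s_2}$ for $\gamma\in\{\alpha,\beta\}$. Substituting this into \eqref{eq:Nalphabetam1}, the framing terms give $V_{Y_\beta}+t_1t_2\,V_{Y_\alpha}^{\ast}$, while the coefficient of $V_{Y_\alpha}^{\ast}V_{Y_\beta}$ collapses to $(1-t_1t_2)+t_1+t_2-1-1=-(1-t_1)(1-t_2)$. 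Hence
\[
N_{Y_\alpha,Y_\beta}=V_{Y_\beta}+t_1t_2\,V_{Y_\alpha}^{\ast}-(1-t_1)(1-t_2)\,V_{Y_\alpha}^{\ast}V_{Y_\beta},
\]
which is precisely the ADHM tangent character attached to the pair of partitions $(Y_\alpha,Y_\beta)$. Note that every $V_0$-versus-$V_1$ distinction in \eqref{eq:Nalphabetam1}, and hence any dependence on whether $\alpha\le\beta$ or $\alpha\ge\beta$, disappears in this charge-zero limit, so a single computation handles all cases.

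The main step is then the arm--leg identity asserting that this expression equals
\[
\sum_{s\in Y_{\alpha}}t_1^{-l_{Y_{\beta}}(s)}t_2^{a_{Y_{\alpha}}(s)+1}+\sum_{t\in Y_{\beta}}t_1^{l_{Y_{\alpha}}(t)+1}t_2^{-a_{Y_{\beta}}(t)}.
\]
Here I would appeal to the classical tangent-space computation of Nakajima for $\mathrm{Hilb}^n(\C^2)$ at a monomial ideal, together with its two-partition refinement due to Nakajima--Yoshioka, in which each box $s\in Y_\alpha$ contributes through its arm in its own diagram $Y_\alpha$ and its leg in the partner diagram $Y_\beta$ (and symmetrically for $t\in Y_\beta$). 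The genuine difficulty is not new combinatorics but convention-matching: one must verify that the normalization of the box coordinates $(s_1,s_2)$, the placement of the $t_1t_2$-twist on the framing, and the duality conventions built into \eqref{eq:Nalphabetam1} align with the definitions of $l_Y$ and $a_Y$ used in the statement, so that the standard box-by-box cancellation yields exactly these two sums rather than their $t_1\leftrightarrow t_2$ or arm--leg transpose. I expect this bookkeeping to be the only real obstacle, and it can be pinned down by a small check such as $Y_\alpha=Y_\beta=(1)$, where the character reduces to $t_1+t_2$ in agreement with both sides.
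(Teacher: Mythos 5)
Your argument is correct and follows the same route as the paper: reduce to the charge-zero case via Proposition~\ref{prop:Nalphabetared}, read off the $R$-term as the character of the triangle region, and evaluate $N_{Y_\alpha,Y_\beta}$ by the Nakajima--Yoshioka arm--leg formula (\cite[Theorem~2.11]{nakajima2005instanton}). Your extra step of deriving the ADHM character $V_{Y_\beta}+t_1t_2V_{Y_\alpha}^{\ast}-(1-t_1)(1-t_2)V_{Y_\alpha}^{\ast}V_{Y_\beta}$ explicitly from \eqref{eq:Nalphabetam1} is a correct elaboration of what the paper leaves implicit.
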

\begin{proof}
As the charge of both $Y_{\alpha}$ and $Y_{\beta}$ is zero, the formula given in \cite[Theorem~2.11]{nakajima2005instanton} can be applied to express $N_{Y_{\alpha},Y_\beta}$. Combining this with Proposition~\ref{prop:Nalphabetared} we obtain the claim. 
\end{proof}

\subsubsection{General case}


Recall that a Maya diagram $M$ has blocks 
\[k=\ldots,-3/2, -1/2,1/2,3/2,\ldots,\] and each block is an $n\times m$ matrix, where $n$ is the number of D5 branes and $m$ is the number of NS5 branes. Denote the $(E_i,F_j)$ entry of the block $k$ by $M^k_{i,j}$.

\begin{definition}
For an entry $(E_i,F_j)$ in the block $k$ of a Maya diagram define $s_{k,i,j}$ to be the sum of all the entries left of this entry (including this entry). That is,
\[
s_{k,i,j}=\sum_{l=-\infty}^{k-1} \sum_{a=1}^m M^l_{i,a} + \sum_{a=1}^j M^k_{i,a}.
\]
This number is well defined because the Maya diagram is `eventually 0' to the left.
\end{definition}

\begin{definition} A {\em 01-pair} for a Maya diagram is 
\begin{itemize} 
\item an entry 1 in block $k_1$ at position $(E_{i_1},F_j)$, and
\item an entry 0 in block $k_0$ at position $(E_{i_0},F_j)$,
\end{itemize}
if either $k_0>k_1$ or ($k_0=k_1$ and $i_0<i_1$).
\end{definition}

For a 01-pair let $s_1$ be $s_{k,i,j}$ corresponding to the ``1'' in the pair and let $k_1$ be the block number of the ``1'' in the pair.  Similarly, let $s_0$ be $s_{k,i,j}$ corresponding to the ``0'' in the pair and let $k_0$ be the block number of the ``0'' in the pair. 

\begin{theorem}
\label{thm:sum01pairs}
    The Maya diagram $M$ represents a torus fixed point in the bow variety. At this point the tangent space to the bow variety, in torus equivariant K-theory is equal to 
\begin{equation} \label{eq:TangentOfMaya}
\sum_{\text{01-pairs}} \left( 
\frac{\u_{i_0}}{\u_{i_1}} t_1^{s_1-s_0+m(k_0-k_1)} t_2^{s_1-s_0}
+
\frac{\u_{i_1}}{\u_{i_0}} t_1^{1-s_1+s_0-m(k_0-k_1)} t_2^{1-s_1+s_0}
\right)
\end{equation}
\end{theorem}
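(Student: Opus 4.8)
The plan is to compute the class $[T_x\mathcal{M}]$ directly as the equivariant Euler characteristic of the deformation complex and then to reorganise the resulting signed sum of weights into the advertised sum over 01-pairs. Since $x$ is a torus fixed point of a smooth bow variety, the map $\sigma$ is injective (the stabiliser of a stable point is finite, so the infinitesimal action has trivial kernel) and $\tau=d\mu$ is surjective (at a smooth point of $\mu^{-1}(0)$); hence the three-term complex of Lemma~\ref{lem:complex_is_equivariant} is exact away from the middle, and in $K_T(\pt)$ the middle cohomology satisfies
\[
[T_x\mathcal{M}] = [\mathbb{M}] - [\mathbb{N}] - \Big[\bigoplus_{X\;\mathrm{D3}} \mathrm{End}(V_X)\Big].
\]
By Corollary~\ref{cor:kertaumodimsigma} this identity splits along the $\u_\beta\u_\alpha^{-1}$ grading, so it suffices to identify, for each ordered pair $(\alpha,\beta)$,
\[
N_{\alpha,\beta} = [\mathbb{M}_{\alpha,\beta}] - [\mathbb{N}_{\alpha,\beta}] - \Big[\bigoplus_{X\;\mathrm{D3}} \mathrm{Hom}(V_{X,\alpha},V_{X,\beta})\Big]
\]
with the part of \eqref{eq:TangentOfMaya} whose row indices $\{i_0,i_1\}$ equal $\{\beta,\alpha\}$.

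First I would make the weight dictionary explicit. Using the fixed-point analysis of Section~\ref{subsec:torus_fixed_points}, each D3-brane space $V_{X,\beta}$ is a sum of one-dimensional weight spaces $t_1^a t_2^b$ indexed by the 1-entries of row $\beta$ of the Maya diagram lying on the appropriate side of $X$. The key point is that a 1 at position $(k,i,j)$ contributes a monomial whose $t_2$-exponent records the cumulative statistic $s_{k,i,j}$, while its $t_1$-exponent records the same statistic corrected by a multiple of $m$ coming from the block index $k$ (this is where the $m$ columns per block enter). Plugging these into the three terms of $N_{\alpha,\beta}$, the contributions of consecutive D3 branes telescope: within each block the moment-map relations, together with the injectivity (resp.\ surjectivity) of the $D_F$ established in Section~\ref{sec:maya}, force substantial cancellation, and the only monomials that survive come from a 1 in row $\alpha$ paired with a 0 in row $\beta$ sitting in the same NS5 column $j$.

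It then remains to match exponents. A surviving term records a $\mathrm{Hom}$ between the weight space attached to the 1 (in row $\alpha$) and that attached to the 0 (in row $\beta$), so its $(t_1,t_2)$-weight is the ratio of the two box weights; tracking the block index produces $t_1^{s_1-s_0+m(k_0-k_1)}$ and the cumulative statistic produces $t_2^{s_1-s_0}$, which is exactly the first summand of \eqref{eq:TangentOfMaya}. The ordering convention ($k_0>k_1$, or $k_0=k_1$ with $i_0<i_1$) is precisely the condition singling out the $\mathrm{Hom}$-direction that is \emph{not} killed against $\mathbb{N}_{\alpha,\beta}$ or the $\mathrm{Hom}(V_{X,\alpha},V_{X,\beta})$ terms. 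The second summand is then obtained for free: the holomorphic symplectic structure makes $T_x\mathcal{M}$ self-dual up to the twist $t_1t_2$, and one checks that applying $(t_1,t_2,\u_i)\mapsto(t_1^{-1},t_2^{-1},\u_i^{-1})$ and multiplying by $t_1t_2$ carries the first summand of each 01-pair to the second. This both supplies the $\u_{i_1}/\u_{i_0}$ terms and halves the computation.

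The main obstacle is the combinatorial bookkeeping in the middle step: one must set up the weight dictionary so that the cancellation among $\mathbb{M}_{\alpha,\beta}$, $\mathbb{N}_{\alpha,\beta}$ and the $\mathrm{Hom}(V_{X,\alpha},V_{X,\beta})$ terms is transparent across the block structure, and verify that the $m(k_0-k_1)$ shift in the $t_1$-exponent is produced correctly by the $m$ columns per block together with the wrap-around between consecutive blocks. A cleaner alternative, which I would run in parallel as a consistency check, is to first settle the case $m=1$ via the Reduction lemma and Propositions~\ref{prop:Nalphabetared}--\ref{prop:Nalphabetam1} (where the answer is Nakajima's arm--leg formula plus the triangle correction $R^{e_\alpha,e_\beta}_{\alpha,\beta}$), and then to reconcile that extended Young diagram form with \eqref{eq:TangentOfMaya}, which is essentially the content of the companion Theorem~\ref{thm:tangentcharm}.
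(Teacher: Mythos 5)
Your overall strategy is viable and genuinely different from the paper's. The paper does not compute the alternating sum directly: it reduces the statement to the finite type~$A$ case by cutting the affine tie/Maya diagram down to the part supporting the ties, invokes the known type~$A$ tangent formula \cite[Theorem~3.2]{foster2023tangent}, and accounts for the affine wrap-around by the $m(k_0-k_1)$ correction to the $t_1$-exponent. Your skeleton --- exactness of the three-term complex of Lemma~\ref{lem:complex_is_equivariant} at the two ends, so that $[T_x\mathcal{M}]=[\mathbb{M}]-[\mathbb{N}]-\bigl[\bigoplus_X\End(V_X)\bigr]$ in $K_T(\pt)$, the splitting along the $\u_{\beta}\u_{\alpha}^{-1}$ grading of Corollary~\ref{cor:kertaumodimsigma}, and the observation that the weight-$t_1t_2$ symplectic form pairs the two summands of each 01-pair so only one of them needs computing --- is correct and would be a sound foundation for a self-contained proof.

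As written, though, there is a genuine gap: the entire content of the theorem is the assertion that, after substituting the weight decompositions of the $V_{X,\alpha}$ into $[\mathbb{M}_{\alpha,\beta}]-[\mathbb{N}_{\alpha,\beta}]-\bigl[\bigoplus_X\Hom(V_{X,\alpha},V_{X,\beta})\bigr]$, the alternating sum collapses onto same-column 01-pairs with exactly the stated exponents and ordering convention, and this is precisely the step you describe as ``telescoping'' and ``substantial cancellation'' without carrying it out. Two specific pieces are missing. First, the weight dictionary itself: the Maya diagram of Section~3.3 only records where the dimensions of the D3 spaces jump, so the $(t_1,t_2)$-weights of the basis vectors of $V_{X,i}$ must be extracted before any cancellation can be checked (the paper does this via the extended Young diagrams of Section~5.2); your claim that the $t_2$-exponent attached to a 1 is the cumulative statistic $s_{k,i,j}$ is asserted, not established. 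Second, your fallback route is partly circular: reconciling the $m=1$ arm--leg form of Propositions~\ref{prop:Nalphabetared}--\ref{prop:Nalphabetam1} with \eqref{eq:TangentOfMaya} would give an independent proof for $m=1$ only, and the passage to general $m$ by restricting to monomials $t_1^at_2^b$ with $a\equiv b\pmod{m}$ is, in the paper, a consequence of Theorem~\ref{thm:sum01pairs} (that is how Theorem~\ref{thm:tangentcharm} is proved), so it cannot be used as an input to prove the general case.
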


\begin{proof} {The proof of \cite[Theorem~3.2]{foster2023tangent} generalizes from the type A case to the type $\hat A$ case. For this, note that a tie diagram of type $\hat A$ determines a tie diagram of type $A$ if we cut off the irrelevant parts outside of the ties. This is equivalent with forgetting the irrelevant part of the Maya diagram; the process is compatible with the dimension of the spaces $V_j$ and hence with those of the tangent spaces due to the construction of the Maya diagram. By the block periodicity in the affine case we have to add $m(k_0-k_1)$ to the $t_1$-exponent $s_1-s_0$. Note also that the four elementary diagram pieces in \cite[Page~11]{foster2023tangent} can be endowed with our $(\mathbb{C}^{\ast})^2$-action in a compatible manner and this action carries through the whole proof.}
\end{proof}

\begin{example}
Consider the first Maya diagram in Example~\ref{ex:2Maya}. It encodes a torus fixed point on a bow variety of dimension 36.  Correspondingly, expression \eqref{eq:TangentOfMaya} for this Maya diagram has 36 terms, corresponding to the 18 possible 01-pairs. One of the 01-pairs is 
\begin{itemize}
\item 1 in block -1/2 at position $(E_3,F_1)$ (yielding $i_1=3, s_1=1, k_1=-1/2$), and
\item 0 in block 3/2 at position $(E_2,F_1)$ (yielding $i_0=2, s_0=1, k_0=3/2$).
\end{itemize}
The two-term contribution of this 01-pair to \eqref{eq:TangentOfMaya} is
\[ 
\frac{\u_2}{\u_3} t_1^{4} + \frac{\u_3}{\u_2} t_1^{-3}t_2. 
\]
\end{example}

For $a,b \in \ZZ$, we set
\[ \delta_{a,b}^{(m)} \coloneqq \begin{cases}
1 & \textrm{if } a \equiv b \; (\mathrm{mod}\, m) \\
0 & \textrm{if } a \not\equiv b \; (\mathrm{mod}\, m).
\end{cases}
\]
Take two Young diagrams $Y_{\alpha}$ and $Y_{\beta}$. For a block $s$ in any of these two diagrams we define its \emph{relative hook length}
\[ \u_{Y_{\alpha},Y_{\beta}}(s) \coloneqq l_{Y_\beta}(s) + a_{Y_\alpha}(s) + 1.\]
Using these notations we can now give a refinement of Proposition~\ref{prop:Nalphabetam1} for arbitrary $m$. It is worth to compare this result with \cite[Theorem~3.4]{nakajima2005instanton}.
\begin{theorem}
\label{thm:tangentcharm}
Let $M=(M_1,\dots,M_n)$ be a Maya diagram corresponding to a T-fixed point of a bow variety with $(B_1,\dots,B_n)$ be the corresponding tuple of extended Young diagrams. 
At this point the tangent space to the bow variety in torus equivariant K-theory is equal to
\[ \sum_{\alpha,\beta=1}^n N_{\alpha,\beta}(t_1,t_2) \u_{\beta}\u_{\alpha}^{-1} \]
where
\[
\begin{multlined}
N_{\alpha,\beta}(t_1,t_2)= \\ t_1^{e_{\beta}-e_{\alpha}}\left\{\sum_{s \in Y_{\alpha}} \left(t_1^{-l_{Y_{\beta}}(s)}t_2^{a_{Y_{\alpha}}(s)+1}\right)\delta^{(m)}_{\u_{Y_{\beta},Y_{\alpha}}(s),e_{\alpha}-e_{\beta}}\right. \\ +\left. \sum_{t \in Y_{\beta}}\left(t_1^{l_{Y_{\alpha}}(t)+1}t_2^{-a_{Y_{\beta}}(t)}\right)\delta^{(m)}_{\u_{Y_{\alpha},Y_{\beta}}(s),e_{\beta}-e_{\alpha}}\right\}+\sum_{(s_1,s_2) \in R^{e_{\alpha},e_{\beta}}_{\alpha,\beta}} t_1^{s_1}t_2^{s_2}\delta_{s_1,s_2}^{(m)}.
\end{multlined}
\]
and $R_{\alpha,\beta}^{e_{\alpha},e_{\beta}}$ is defined in \eqref{eq:Rdef}.
\end{theorem}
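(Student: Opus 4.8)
The plan is to derive the formula by comparing it, monomial by monomial, with the 01-pair expression of Theorem~\ref{thm:sum01pairs}, which already computes the same K-theory class for arbitrary $m$. Since by Corollary~\ref{cor:kertaumodimsigma} the full class is $\sum_{\alpha,\beta}N_{\alpha,\beta}(t_1,t_2)\u_\beta\u_\alpha^{-1}$, the term $N_{\alpha,\beta}$ is exactly the coefficient of $\u_\beta\u_\alpha^{-1}$ in \eqref{eq:TangentOfMaya}. Reading off this coefficient, $N_{\alpha,\beta}$ is the sum of $t_1^{s_1-s_0+m(k_0-k_1)}t_2^{s_1-s_0}$ over all 01-pairs whose ``$1$'' sits in row $\alpha$ and whose ``$0$'' sits in row $\beta$, plus the sum of $t_1^{1-s_1+s_0-m(k_0-k_1)}t_2^{1-s_1+s_0}$ over all 01-pairs with the roles of $\alpha$ and $\beta$ interchanged. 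The whole task is then to translate this sum over 01-pairs in a fixed pair of rows into a sum over boxes of $Y_\alpha$ and $Y_\beta$ together with the triangle data $R^{e_\alpha,e_\beta}_{\alpha,\beta}$.

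First I would record the dictionary between 01-pairs and boxes. In a 01-pair the two entries necessarily lie in the \emph{same} NS5 column $F_j$; under the diagonal labelling of extended Young diagrams introduced above, the entries of $M_i$ in column $F_j$ are exactly the boxes of $B_i$ on the diagonals $a-b\equiv j\pmod m$. Accordingly, I split the 01-pairs for the rows $(\alpha,\beta)$ into those whose two entries both lie in the classical Young parts $Y_\alpha,Y_\beta$ and those involving the triangular charge parts $T_\alpha,T_\beta$. The quantities $s_{k,i,j}$ and the block indices $k$ attached to the two entries of a pair translate, via the single-row Maya-to-partition correspondence used in building the $B_i$, into the arm and leg lengths of the corresponding box.

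For the pairs lying in the classical parts I would invoke the case $m=1$ directly. When $m=1$ there is a single column, hence no congruence constraint, and Proposition~\ref{prop:Nalphabetam1} already identifies the total contribution of these pairs with $t_1^{e_\beta-e_\alpha}\bigl\{\sum_{s\in Y_\alpha}t_1^{-l_{Y_\beta}(s)}t_2^{a_{Y_\alpha}(s)+1}+\sum_{t\in Y_\beta}t_1^{l_{Y_\alpha}(t)+1}t_2^{-a_{Y_\beta}(t)}\bigr\}$, the exponents matching those produced by $s_1-s_0+m(k_0-k_1)$ and $s_1-s_0$. For general $m$ the only change is that the ``$1$'' box and the ``$0$'' box of a pair must lie in a common column $F_j$, i.e.\ on diagonals in the same residue class mod $m$. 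Tracking the diagonal coloring against the charge shifts $e_\alpha,e_\beta$ shows that for a box $s\in Y_\alpha$ this coincidence holds exactly when the relative hook length $\u_{Y_\beta,Y_\alpha}(s)=l_{Y_\beta}(s)+a_{Y_\alpha}(s)+1$ satisfies $\u_{Y_\beta,Y_\alpha}(s)\equiv e_\alpha-e_\beta\pmod m$, and symmetrically for $t\in Y_\beta$. This is precisely the effect of inserting the factors $\delta^{(m)}_{\u_{Y_\beta,Y_\alpha}(s),e_\alpha-e_\beta}$ and $\delta^{(m)}_{\u_{Y_\alpha,Y_\beta}(t),e_\beta-e_\alpha}$.

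Finally, the 01-pairs with at least one entry in a triangle $T_\alpha$ or $T_\beta$ reproduce, again by comparison with the $m=1$ statement of Proposition~\ref{prop:Nalphabetam1}, the sum $\sum_{(s_1,s_2)\in R^{e_\alpha,e_\beta}_{\alpha,\beta}}t_1^{s_1}t_2^{s_2}$ with $R^{e_\alpha,e_\beta}_{\alpha,\beta}$ as in \eqref{eq:Rdef}; the same-column requirement now retains only the monomials with $s_1\equiv s_2\pmod m$, which is the factor $\delta^{(m)}_{s_1,s_2}$. Adding the three types of contributions yields the claimed expression for $N_{\alpha,\beta}$, and as a consistency check, for $m=1$ every $\delta^{(1)}$ equals $1$ and the formula collapses to Proposition~\ref{prop:Nalphabetam1}. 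I expect the main obstacle to be the identification of the ``same column'' condition with the congruences, i.e.\ proving that the difference of diagonal residues of the matched $1$- and $0$-entries equals the relative hook length up to the charge difference; this demands careful bookkeeping of the coloring pattern against the indices $s_{k,i,j}$ and $k$ of Theorem~\ref{thm:sum01pairs}, whereas the matching of the $t_1,t_2$-exponents is the routine $m=1$ computation already carried out in Proposition~\ref{prop:Nalphabetam1}.
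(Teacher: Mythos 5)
Your proposal is correct and follows essentially the same route as the paper: both arguments use Theorem~\ref{thm:sum01pairs} together with the $m=1$ formula of Proposition~\ref{prop:Nalphabetam1}, and obtain the general case by keeping exactly those monomials $t_1^at_2^b$ with $a\equiv b\pmod m$, which is where the $\delta^{(m)}$ factors come from. The paper merely packages the "same NS5 column" bookkeeping you describe more compactly, by unrolling $M$ into the Maya diagram $M'$ of a one-NS5-brane bow variety (forgetting the $\ZZ/m\ZZ$-equivariant structure) and observing that the $T$-module of $M$ is precisely the congruence-compatible summand of that of $M'$.
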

\begin{proof}
By forgetting the $\ZZ/m\ZZ$ equivariant structure we can consider the Maya diagram $M$ as a Maya diagram $M'$ of a fixed point of a bow variety with a single NS5 brane. In this correspondence, say, in the positive direction the $j$th column of block $k/2$ of $M$ becomes the single column of block $m(k-1)+j/2$ in $M'$. 

By Theorem~\ref{thm:sum01pairs} the $T$-module corresponding to $M$ is the summand of the $T$-module corresponding to $M'$ comprising those monomials $t_1^at_2^b$ for which $a \equiv b \; (\mathrm{mod}\, m)$.
The monomials in the claim are exactly these ones.

\end{proof}

\subsection{The effect of D5 swap}
\label{sec:D5again}

Let us consider again the combinatorial operation called D5 swap depicted in \eqref{eq:D5swap}; for simplicity we will restrict our attention to separated brane diagrams. The associated bow varieties will be called $\mathcal{M}=\mathcal{M}(d,e,f)$ and $\mathcal{M}'=\mathcal{M}(d,e',f)$ respectively. The vector $e'$ is obtained from $e$ by swapping the $j$'th and $j+1$'st components, where $E_j$ and $E_{j+1}$ are the two swapped D5 branes.

Let $x\in \mathcal{M}(d,e,f)$ and $x'\in\mathcal{M}(d,e',f)$ be torus fixed points in the two varieties, corresponding to each other under the bijection illustrated in Figure \ref{fig:D5swap}. 
The Maya diagram of $x'$ is obtained from the Maya diagram of $x$ by swapping the $j$'th and $j+1$'st rows.

\begin{theorem} 
\label{thm:D5swap_tangent}
    Let $\delta e=e_{j+1}-e_j$. We have
\begin{equation}\label{eq:D5swap_tangent}
T_x\mathcal{M} - T_{x'}\mathcal{M}'|_{\u_j\leftrightarrow \u_{j+1}}=
W+ t_1t_2 W^*
\qquad
\in K_T(\pt),
\end{equation}
where 
\[
W=\begin{cases}
-\frac{\u_{j+1}}{\u_{j}} \sum_{i=1}^{\delta e} (t_1t_2)^i 
&  \text{if } \delta e\geq 0 
\\
\phantom{-}\frac{\u_{j+1}}{\u_{j}} \sum_{i=\delta e+1}^{0} (t_1t_2)^i 
& \text{if } \delta e<0
\end{cases}
\]
\end{theorem}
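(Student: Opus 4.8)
The plan is to compute the difference of tangent characters directly from the explicit formula in Theorem~\ref{thm:tangentcharm}, exploiting the fact that the D5 swap merely interchanges rows $j$ and $j{+}1$ of the Maya diagram, which amounts to interchanging the extended Young diagrams $B_j$ and $B_{j+1}$ (hence $Y_j\leftrightarrow Y_{j+1}$ and $e_j\leftrightarrow e_{j+1}$). Writing the character as $\sum_{\alpha,\beta}N_{\alpha,\beta}\u_\beta\u_\alpha^{-1}$, I would first observe that after the substitution $\u_j\leftrightarrow\u_{j+1}$ almost all summands cancel in the difference: any block $N_{\alpha,\beta}$ with $\{\alpha,\beta\}\cap\{j,j{+}1\}=\emptyset$ is literally unchanged, and the pairs involving exactly one of $j,j{+}1$ reorganize among themselves. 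The only genuinely surviving contribution comes from the ``diagonal'' interaction between rows $j$ and $j{+}1$, i.e. from $N_{j,j+1}$ and $N_{j+1,j}$ together with their swapped counterparts.

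Let me reduce to the essential computation. The swap exchanges $(Y_j,e_j)$ with $(Y_{j+1},e_{j+1})$, so the Young-diagram parts of $N_{j,j+1}$ and $N_{j+1,j}$ get interchanged after applying $\u_j\leftrightarrow\u_{j+1}$; those arm/leg terms therefore cancel in the difference. What does \emph{not} cancel is the triangle term $R^{e_\alpha,e_\beta}_{\alpha,\beta}$, because $R$ depends on $e_\beta-e_\alpha$ and on whether $\alpha\lessgtr\beta$ in an asymmetric way. Concretely I expect
\[
T_x\mathcal{M}-T_{x'}\mathcal{M}'|_{\u_j\leftrightarrow\u_{j+1}}
=\Bigl(\text{triangle terms of }N_{j,j+1},N_{j+1,j}\Bigr)
-\Bigl(\text{same after }e_j\leftrightarrow e_{j+1}\Bigr),
\]
weighted by $\u_{j+1}\u_j^{-1}$ and $\u_j\u_{j+1}^{-1}$ respectively. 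So the heart of the proof is to evaluate the $\delta^{(m)}$-constrained triangle sums $\sum_{(s_1,s_2)\in R}t_1^{s_1}t_2^{s_2}\delta^{(m)}_{s_1,s_2}$ for the four relevant $(\alpha,\beta)$-configurations and take their alternating difference.

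Next I would plug in the definition \eqref{eq:Rdef} of $R$ and use $T_i$ from \eqref{eq:Tdef}. Since the $\delta^{(m)}_{s_1,s_2}$ constraint selects monomials with $s_1\equiv s_2\pmod m$, and the triangles $T_i$ consist of lattice points hugging the diagonal $s_1=s_2$, the surviving monomials are exactly those with $s_1=s_2$, i.e. powers of $t_1t_2$. This is precisely why the answer has the shape $W+t_1t_2\,W^\ast$ with $W$ a Laurent polynomial in $t_1t_2$ times $\u_{j+1}\u_j^{-1}$: the ``1''-part of each $R$ sums over the diagonal entries of $T_{\delta e}$ (for $\delta e\geq 0$) or $T_{-\delta e}$, giving $\sum_{i=1}^{\delta e}(t_1t_2)^i$ up to sign, and the companion ``0''-pair term, which carries the $t_1t_2$ and the dual exponents, produces $t_1t_2W^\ast$. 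I would organize the bookkeeping by the sign of $\delta e=e_{j+1}-e_j$, matching the two cases in the statement, and check the boundary indices carefully.

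\textbf{The main obstacle} I anticipate is the index bookkeeping in extracting the diagonal ($s_1=s_2$) lattice points of the triangles $T_i$ under the mod-$m$ constraint, and in particular getting the endpoints of the summation ranges ($\sum_{i=1}^{\delta e}$ versus $\sum_{i=\delta e+1}^{0}$) and the overall sign exactly right when one swaps the roles of $e_j$ and $e_{j+1}$ across the $\alpha\leq\beta$ versus $\alpha\geq\beta$ cases in \eqref{eq:Rdef}. A clean way to control this is to verify the claim first in the single-NS5 case $m=1$ using the unconstrained Proposition~\ref{prop:Nalphabetared} (where $R^{e_\alpha,e_\beta}_{\alpha,\beta}$ is literally $T_{e_\beta-e_\alpha}$ up to a $t_1^{\pm1}$ factor), confirm the telescoping $T_{\delta e}$-versus-$T_{-\delta e}$ difference collapses to $\sum(t_1t_2)^i$, and then note that passing to general $m$ only intersects everything with the diagonal $s_1\equiv s_2$, which does not alter the already-diagonal surviving terms. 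The symmetry $W+t_1t_2W^\ast$ itself should fall out of the fact that the two members of every 01-pair contribute exponents related by $(s_1,s_2)\mapsto(1-s_1,1-s_2)$ in Theorem~\ref{thm:sum01pairs}, providing an independent sanity check.
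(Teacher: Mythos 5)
Your proposal is correct in outline, but it takes a genuinely different route from the paper. The paper proves Theorem~\ref{thm:D5swap_tangent} directly from the 01-pair formula of Theorem~\ref{thm:sum01pairs}: it observes that all 01-pairs of $M$ and $M'$ not consisting of a vertical $0$/$1$ pair in rows $j$ and $j{+}1$ of the same column cancel, and then runs a column-by-column induction tracking the running count $\Delta$ of type-(a) minus type-(b) columns, finally identifying $\Delta=e_{j+1}-e_j$ by a separate counting argument. You instead invoke Theorem~\ref{thm:tangentcharm} and reduce everything to the triangle terms $R^{e_\alpha,e_\beta}_{\alpha,\beta}$. Your cancellation analysis is right: for $\beta\notin\{j,j{+}1\}$ one has $N'_{j,\beta}=N_{j+1,\beta}$ (same data, same side of the $\alpha\lessgtr\beta$ dichotomy), and for the cross terms the arm/leg ($Q$) parts of $N_{j,j+1}$ and $N'_{j+1,j}$ coincide because they depend only on the pairs $(Y_\bullet,e_\bullet)$ and not on the positions; what survives is exactly the asymmetry of \eqref{eq:Rdef} in $\alpha\leq\beta$ versus $\alpha\geq\beta$, e.g. $t_1T_{\delta e-1}-T_{\delta e}=-\sum_{i=1}^{\delta e}(t_1t_2)^i$ for $\delta e>0$, which is $W$, and the companion difference gives $t_1t_2W^*$. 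One imprecision to fix: for a single triangle $T_l$ with $|l|>m$ the $\delta^{(m)}$-filter does \emph{not} select only the exact diagonal (off-diagonal points with $s_1\equiv s_2\pmod m$ survive), so the statement must be made about the \emph{difference} of the two nested triangles, which is supported exactly on $s_1=s_2$ — your closing remark already contains this fix. The trade-off: your argument makes the $e$-dependence transparent because Theorem~\ref{thm:tangentcharm} has already isolated it in the $R$-terms, but it rests on the heavier Theorem~\ref{thm:tangentcharm} (itself deduced from Theorem~\ref{thm:sum01pairs}), whereas the paper's induction works one level lower and is self-contained within the Maya-diagram combinatorics.
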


\begin{proof}
 Let $M$ be the Maya diagram of $x$, and $M'$ the Maya diagram of $x'$; they are the same except the $j$'th and $j+1$'th columns are swapped. Consider the descriptions of $T_x\mathcal{M}$ and $T_{x'}\mathcal{M}'$ as sums of expressions parameterized by 01-pairs in $M$ and $M'$, cf.~Theorem~\ref{thm:sum01pairs}. Most of the 01-pairs of $M$ and $M'$ are in obvious bijection, and the corresponding terms are the same for $T_x\mathcal{M}$ and $T_{x'}\mathcal{M}'|_{\u_j\leftrightarrow \u_{j+1}}$. The only exceptions are the 01-pairs with 0 and 1 in rows $j$ and $j+1$ above each other:
 \begin{enumerate}
 \item[(a)]  entry 1 in row $E_j$ and entry 0 in row $E_{j+1}$ in the same column,
 \item[(b)] entry 0 in row $E_j$ and entry 1 in row $E_{j+1}$ in the same column. 
\end{enumerate}
An occurrence of (a) contributes two terms to $T_x\mathcal{M}$, and an occurrence of (b) contributes two terms to $T_{x'}\mathcal{M}'|_{\u_j\leftrightarrow \u_{j+1}}$. The contributions of all other 01-pairs to $T_x\mathcal{M} - T_{x'}\mathcal{M}'|_{\u_j\leftrightarrow \u_{j+1}}$ cancel, hence we disregard them.

To calculate the contributions of these vertical 01-pairs, let us consider rows $j$ and $j+1$ only, as in the picture
\[
\begin{tikzpicture}
 \draw (0,-.2) -- (10,-.2);
 \draw (0,.6) -- (10,.6);
  \draw (1,0) node {$1$};
  \draw (1,.4) node {$0$};
  \draw (2,0) node {$1$};
  \draw (2,.4) node {$0$};
   \draw (3,0) node {$0$};
  \draw (3,.4) node {$1$};
   \draw (4,0) node {$0$};
  \draw (4,.4) node {$1$};
   \draw (5,0) node {$0$};
  \draw (5,.4) node {$1$};
   \draw (6,0) node {$0$};
  \draw (6,.4) node {$1$};
   \draw (7,0) node {$1$};
  \draw (7,.4) node {$0$};
   \draw (8,0) node {$0$};
  \draw (8,.4) node {$1$};
   \draw (9,0) node {$0$};
  \draw (9,.4) node {$1$};

 \draw (0.8,-.2) -- (.8,.6); \draw (1.2,-.2) -- (1.2,.6);
  \draw (1.8,-.2) -- (1.8,.6); \draw (2.2,-.2) -- (2.2,.6);
   \draw (2.8,-.2) -- (2.8,.6); \draw (3.2,-.2) -- (3.2,.6);
    \draw (3.8,-.2) -- (3.8,.6); \draw (4.2,-.2) -- (4.2,.6);
     \draw (4.8,-.2) -- (4.8,.6); \draw (5.2,-.2) -- (5.2,.6);
      \draw (5.8,-.2) -- (5.8,.6); \draw (6.2,-.2) -- (6.2,.6);
       \draw (6.8,-.2) -- (6.8,.6); \draw (7.2,-.2) -- (7.2,.6);
        \draw (7.8,-.2) -- (7.8,.6); \draw (8.2,-.2) -- (8.2,.6);
         \draw (8.8,-.2) -- (8.8,.6); \draw (9.2,-.2) -- (9.2,.6);
  
\draw ( 12,0) node {$E_{j+1}$};
\draw ( 12,.4) node {$E_j$};
 \draw (1,.9) node {(b)};
 \draw (2,.9) node {(b)};
  \draw (3,.9) node {(a)};
   \draw (4,.9) node {(a)};
    \draw (5,.9) node {(a)};
     \draw (6,.9) node {(a)};
      \draw (7,.9) node {(b)};
       \draw (8,.9) node {(a)};
        \draw (9,.9) node {(a)};
 \draw (-1,-.5) node {$\Delta=$};
  \draw (0,-.5) node {$\ldots 0\ldots$};
 \draw (1.15,-.5) node {$-1 \ldots$};
  \draw (2.15,-.5) node {$-2 \ldots$};
   \draw (3.15,-.5) node {$-1\ldots$};
    \draw (4.25,-.5) node {$0\ldots$};
     \draw (5.25,-.5) node {$1\ldots$};
      \draw (6.25,-.5) node {$2\ldots$};
       \draw (7.25,-.5) node {$1\ldots$};
        \draw (8.25,-.5) node {$2\ldots$};
         \draw (9.25,-.5) node {$3\ldots$};
          \draw (10.25,-.5) node {$3 \ldots$};
         
\end{tikzpicture}
\]
where in between the indicated columns all other columns are either $\begin{bsmallmatrix}0 \\0 \end{bsmallmatrix}$ or $\begin{bsmallmatrix}1 \\1 \end{bsmallmatrix}$. For each column let $\Delta$ denote the number of type (a) columns minus the number of type (b) columns left of (and including) this column. 

We claim that the total contribution of the vertical 01-pairs from column $-\infty$ to column $k$ is $W+ t_1t_2 W^*$ where
\[
W=\begin{cases}
-\frac{\u_{j+1}}{\u_{j}} \sum_{i=1}^{\Delta} (t_1t_2)^i 
&  \text{if } \Delta \geq 0 
\\
\phantom{-}\frac{\u_{j+1}}{\u_{j}} \sum_{i=\Delta+1}^{0} (t_1t_2)^i 
& \text{if } \Delta <0.
\end{cases}
\]
For $k\ll 0$ this statement is vacuously true, and the $k\to k+1$ induction step follows (in the four possible $\begin{bsmallmatrix}0 \\0 \end{bsmallmatrix}$, $\begin{bsmallmatrix}1 \\0 \end{bsmallmatrix}$, $\begin{bsmallmatrix}0 \\1 \end{bsmallmatrix}$, $\begin{bsmallmatrix}1 \\1 \end{bsmallmatrix}$ cases) from the formula in Theorem~\ref{thm:sum01pairs}. 

It remains to prove that $\Delta$ in a column far right is equal to $e_{j+1}-e_j$. For this let $l_1, l_2, l_3$ denote the number of  
$\begin{bsmallmatrix}1 \\1 \end{bsmallmatrix}$, 
$\begin{bsmallmatrix}1 \\0 \end{bsmallmatrix}$, 
$\begin{bsmallmatrix}0 \\1 \end{bsmallmatrix}$ occurrences in rows $j$ and $j+1$ {\em in negative blocks}. Let $r_1, r_2, r_3$ denote the number of
$\begin{bsmallmatrix}1 \\0 \end{bsmallmatrix}$, 
$\begin{bsmallmatrix}0 \\1 \end{bsmallmatrix}$, 
$\begin{bsmallmatrix}0 \\0 \end{bsmallmatrix}$ occurrences in rows $j$ and $j+1$ {\em in positive blocks}. Then we have that $\Delta$ on the far right is equal to 
\[
(l_2+r_1)-(l_3+r_2)
=
\left( (r_1+r_3)-(l_1+l_3) \right)
-
\left( (r_2+r_3)-(l_1+l_2) \right)
=
e_{j+1}-e_j,
\]
which completes the proof.
\end{proof}

A remarkable consequence of Theorem \ref{thm:D5swap_tangent} is that the difference \eqref{eq:D5swap_tangent} only depends on the ambient variety $\mathcal{M}$, not on the fixed point $x \in \mathcal{M}$. There {\em could} be a simple geometric explanation for this phenomenon: specifically, if $\mathcal{M}'$ were the total space of a vector bundle over $\mathcal{M}$ (with a $T$ action), or vice versa. Indeed, in many examples this is the case—one of $\mathcal{M}$ or $\mathcal{M}'$ is a vector bundle over the other. However, this is not always true; a counterexample is provided in \cite{ji2023bow}, where one of the varieties is a point, and the other has odd cohomology.

Although our desire for $\mathcal{M}'$ to be a vector bundle over $\mathcal{M}$ (or vice versa) is not strictly accurate, Theorem \ref{thm:D5swap_tangent} demonstrates that near the torus fixed points, they resemble a situation where one is a vector bundle over the other.

\section{Refined generating series}
\label{sec:refinedgenseries}

Our aim in this section is to refine Theorem \ref{thm:eulerchargen} in various directions---using the formulas on the torus characters at fixed point proved in Section \ref{sec:equivK}. Below we will freely shift between the notations $\mathcal{M} = \mathcal{M}(d,e,f) = \mathcal{M}(\underline{d})$.

\subsection{Cells from the torus action}
\label{subsec:celldecomposition}


Let $\mathcal{M}_{\mathrm{0}}$ be the affine algebro-geometric quotient $\mu^{-1}(0)/\!\!/ \mathcal{G}$. This is the bow variety corresponding to the stability parameter $\nu^{\RR}=0$ (recall that we set $\nu^{\CC}=0$ already). 
The map
\[ \pi: \mathcal{M} \to \mathcal{M}_{\mathrm{0}}\]
is semismall onto its image \cite[Proposition~4.5]{nakajima2017cherkis}, in particular, it is proper. As the torus action on $\mathcal{M}$ is Hamiltonian, $\pi$ is $T$-equivariant.

By \cite[(4.3)]{nakajima2017cherkis} there exists a decomposition
\[ \mathcal{M}_0 = \bigsqcup_{\underline{d}',\lambda} \mathcal{M}^s(\underline{d}') \times S^{\lambda}(\CC^2 \setminus \{0\} / (\ZZ/m\ZZ) ) \]
where $m$ is the number of NS5 branes, $\mathcal{M}^s(\underline{d}')$ is the stable part of a bow variety associated with the same brane diagram and a dimension vector $\underline{d}' \leq \underline{d}$, $S^{\lambda}$ is the stratum of the symmetric product $S^{|\lambda|}(\CC^2 \setminus \{0\} / (\ZZ/m\ZZ) )$ consisting of configurations whose multiplicities are given by the partition $\lambda$. The difference $|\underline{d}|-|\underline{d}'|-|\lambda|$ is the `multiplicity' at the origin $0 \in \CC^2 / (\ZZ/m\ZZ)$. Consider the component in $\mathcal{M}_0$ such that $\underline{d}'=0$ and $|\lambda|=0$; we denote it by $[0]$. 
The subvariety $\pi^{-1}([0]) \subset \mathcal{M}$ can be thought of as the analogue of the punctual Quot scheme\footnote{Some sources \cite{kamnitzer2022symplectic, ji2023bow} call this the \emph{core} of the bow variety. This should not be confused with the core of a Maya diagram.}.

For a generic one-parameter subgroup $\gamma: \mathbb{C}^{\ast} \to T$ the fixed point set coincides with that of $T$:
\[ \mathcal{M}^{\gamma(\mathbb{C}^{\ast})}=\mathcal{M}^T.\]
Then for each fixed point $x \in \mathcal{M}^T$ one can consider $(+)$, respectively $(-)$, attracting sets:
\[ S_x = \left\{y \in M\;|\; \lim_{t \to 0} \gamma(t)\cdot y= x\right\}, \]
\[ U_x = \left\{y \in M\;|\; \lim_{t \to \infty} \gamma(t)\cdot y= x\right\}.\]
These are affine spaces by \cite{bialynicki1973some}. Moreover, there exists an order on the fixed points so that $\bigcup_{z \leq x} S_z$ (resp. $\bigcup_{z \leq x} U_z$) is closed in $\bigcup_y S_y$ (resp. $\bigcup_y U_y$) for each $x$.

We introduce generating series for the virtual Betti numbers arising from the above $+/-$ cells:
\[ Z^{-}(q,t)=Z^{-}_{e,f}(q,t)=\sum_{d \geq 0} P^-_t(\mathcal{M}(d,e,f))q^d \coloneqq \sum_{d \geq 0,x} P_t(U_x)q^d \]
\[ Z^{+}(q,t)=Z^{+}_{e,f}(q,t)=\sum_{d \geq 0} P^+_t(\mathcal{M}(d,e,f))q^d \coloneqq\sum_{d \geq 0,x} P_t(S_x)q^d \]
where $x$ runs through fixed points of $\mathcal{M}(d,e,f)$ (in notation we do not indicate the choice of one-parameter subgroup $\gamma$, it should be clear from the context).

The torus action on the bow variety $\mathcal{M}$  is called \emph{circle compact} if 
\begin{equation} 
\label{eq:cellscover}
\bigcup_x S_x = \mathcal{M},\quad  \bigcup_x U_x=\pi^{-1}([0])\end{equation}
hold. 
\begin{proposition} 
\label{prop:bbbowvar}
Assume that $\mathcal{M}$ satisfies \eqref{eq:cellscover} for a one-parameter subgroup $\gamma$. Then
\begin{enumerate}
\item $H_{\mathrm{odd}}(\mathcal{M},\ZZ)=0$ \item $H_{\mathrm{even}}(\mathcal{M},\ZZ)$ is a free abelian group
\item the cycle map from the Chow group \[A_{\ast}(\mathcal{M})\to H_{\mathrm{even}}(\mathcal{M},\ZZ)\] is an isomorphism
\item $\pi^{-1}([0])$ is a homotopy retract of $\mathcal{M}$
\end{enumerate}
\end{proposition}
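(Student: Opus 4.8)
The plan is to deduce all four assertions from the two affine pavings furnished by the hypothesis, treating the geometric statement (4) first and then reading off (1)--(3) by the standard homological algebra of filtrable affine pavings, exactly as in the quiver variety case. Concretely, \eqref{eq:cellscover} together with the already-cited facts---that each $S_x$ and $U_x$ is an affine space, and that the partial unions $\bigcup_{z\le x}S_z$ (resp.\ $\bigcup_{z\le x}U_z$) are closed---says precisely that $\{S_x\}_{x\in\mathcal{M}^T}$ is a filtrable decomposition of $\mathcal{M}$ into affine cells, and that $\{U_x\}_{x\in\mathcal{M}^T}$ is such a decomposition of $\pi^{-1}([0])$. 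Since $[0]$ is a point and $\pi$ is proper, the core $\pi^{-1}([0])$ is moreover \emph{compact}.

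First I would prove (4). The geometric input is that $\gamma$ contracts the affine base onto the distinguished point: as $\mathcal{M}_0$ is an affine cone with vertex $[0]$ on which $\gamma$ acts with non-negative weights, the map $\rho_\lambda(p)=\gamma(\lambda)\cdot p$ (with $\rho_0\equiv[0]$) is a deformation retraction of $\mathcal{M}_0$ onto $[0]$. Because $\pi$ is $T$-equivariant, the flow $y\mapsto\gamma(\lambda)\cdot y$ covers $\rho_\lambda$, and by the hypothesis $\bigcup_x S_x=\mathcal{M}$ every forward limit $\lim_{\lambda\to0}\gamma(\lambda)\cdot y$ exists and lies in $\mathcal{M}^T\subset\pi^{-1}([0])$. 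The aim is to promote this flow to an honest deformation retraction of $\mathcal{M}$ onto $\pi^{-1}([0])$. I expect this continuity to be the main obstacle: the naive limit map $y\mapsto\lim_{\lambda\to0}\gamma(\lambda)y$ has image the discrete set $\mathcal{M}^T$ and is discontinuous across cell boundaries, so one cannot simply flow to $\lambda=0$. The resolution, following Nakajima's treatment of quiver varieties, is to realise the flow as the negative gradient flow of the moment map $\mu_{\mathbb{R}}\colon\mathcal{M}\to\mathbb{R}$ of the circle $S^1\subset\gamma(\mathbb{C}^*)$, which exists because the $T$-action is Hamiltonian; the hypothesis $\bigcup_xS_x=\mathcal{M}$ makes this flow complete and everywhere convergent, while properness of $\pi$ together with the contraction of the base guarantees that the flow retracts $\mathcal{M}$ onto the minimal invariant set $\pi^{-1}([0])=\bigcup_xU_x$ rather than merely onto $\mathcal{M}^T$.

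Granting (4), the remaining statements are formal. The compact variety $\pi^{-1}([0])$ carries the filtrable affine paving $\{U_x\}$; running the long exact sequences of the successive closed pairs $\big(\bigcup_{z\le x}U_z,\ \bigcup_{z<x}U_z\big)$ and using that each relative term is free and concentrated in a single even degree forces all connecting homomorphisms to vanish by parity, so $H_{\mathrm{odd}}(\pi^{-1}([0]),\ZZ)=0$ and $H_{\mathrm{even}}(\pi^{-1}([0]),\ZZ)$ is free abelian with basis the cell classes $[\overline{U_x}]$. The homotopy equivalence (4) transports these two conclusions to $\mathcal{M}$, giving (1) and (2).

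Finally, for (3) I would argue on $\mathcal{M}$ itself through the paving $\{S_x\}$. By the filtrable affine paving property (Fulton's formalism for Chow groups of cellular varieties) the closures $[\overline{S_x}]$ freely generate $A_*(\mathcal{M})$, and the same filtration comparison shows the cycle map is an isomorphism onto (Borel--Moore) homology, which by Poincar\'e duality on the smooth variety $\mathcal{M}$ and the identification (4) with the compact core matches $H_{\mathrm{even}}(\mathcal{M},\ZZ)$ rank by rank; since everything in sight is free abelian and the cycle classes are exactly the cell generators, the cycle map $A_*(\mathcal{M})\to H_{\mathrm{even}}(\mathcal{M},\ZZ)$ is the asserted isomorphism.
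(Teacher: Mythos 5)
Your proposal is correct, and it takes essentially the same route as the paper: the paper gives no details here beyond citing Nakajima's lectures as ``standard,'' and your outline---deformation retraction onto the core $\pi^{-1}([0])$ via the moment-map/gradient flow (correctly noting that the naive limit map is discontinuous), followed by the long-exact-sequence argument for the filtrable affine paving and the cellular-variety cycle-map isomorphism---is exactly the argument that reference carries out.
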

\begin{proof} This is standard, see e.g. \cite{nakajima2004lectures}.
\end{proof}
\begin{corollary} 
\label{cor:bbbowvarfam}
If $\mathcal{M}(d,e,f)$ satisfies \eqref{eq:cellscover} for a one-parameter subgroup $\gamma$, for all $d\geq 0$, then we have
\begin{enumerate}
\item $Z(q,t)=Z^{-}(q,t)$,
\item $\mathcal{Z}(q)\vert_{\mathbb{L}=t^2}=Z^{+}(q,t)$
\end{enumerate}
where $\mathbb{L}$ is the class of the affine line in $K_0(Var)$.
\end{corollary}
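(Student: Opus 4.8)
The plan is to prove the two identities separately, since they draw on the two halves of the circle-compactness condition~\eqref{eq:cellscover} in different ways, and only part~(1) requires the geometric input of Proposition~\ref{prop:bbbowvar}.

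For part~(2) I would work directly in $K_0(Var)$. The hypothesis $\bigcup_x S_x=\mathcal{M}$, together with the order condition that each $\bigcup_{z\le x}S_z$ be closed, realizes $\mathcal{M}(d,e,f)$ as a finite, filtrable disjoint union of locally closed affine cells $S_x\cong\mathbb{A}^{\dim S_x}$ indexed by the finitely many $T$-fixed points $x$. The scissor relations then give $[\mathcal{M}(d,e,f)]=\sum_x[S_x]=\sum_x\mathbb{L}^{\dim S_x}$ in $K_0(Var)$. Multiplying by $q^d$, summing over $d$, and applying the specialization $\mathbb{L}\mapsto t^2$ (under which each cell contributes $P_t(S_x)=t^{2\dim S_x}$) yields $\mathcal{Z}(q)\vert_{\mathbb{L}=t^2}=\sum_{d,x}t^{2\dim S_x}q^d=Z^+(q,t)$.

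For part~(1) the subtlety is that the $(-)$-cells do \emph{not} cover $\mathcal{M}$: by the second half of~\eqref{eq:cellscover} they pave only the core $\pi^{-1}([0])$. The plan is therefore to transport the Betti-number computation through the retraction. Since $\pi$ is proper and $[0]$ is a single point of $\mathcal{M}_0$, the core is compact, and $\pi^{-1}([0])=\bigsqcup_x U_x$ is a filtrable affine paving of a compact space; hence its homology is free and concentrated in even degrees, with Poincaré polynomial $\sum_x t^{2\dim U_x}=\sum_x P_t(U_x)$. By Proposition~\ref{prop:bbbowvar}(4) the core is a homotopy retract of $\mathcal{M}(d,e,f)$, so the two spaces have equal Betti numbers; combined with parts~(1)--(2) of that proposition (no odd cohomology and free even cohomology, hence a pure weight filtration) this identifies the (virtual) Poincaré polynomial $P_t(\mathcal{M}(d,e,f))$ with the genuine topological one, namely $\sum_x t^{2\dim U_x}$. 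Summing against $q^d$ gives $Z(q,t)=Z^-(q,t)$.

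The step I expect to be the crux is exactly this routing in part~(1): unlike in part~(2), naive additivity is unavailable because the $(-)$-cells sit on the compact core rather than on $\mathcal{M}$ itself. What makes the argument go through is the combination of properness of $\pi$ (so that on the core ordinary and Borel--Moore homology agree and the affine paving reads off the Betti numbers) with the homotopy retract and the vanishing of odd cohomology from Proposition~\ref{prop:bbbowvar}; the latter guarantees both that the retract preserves the full Poincaré polynomial and that no weight-filtration discrepancy arises in passing between the topological invariant of $\mathcal{M}$ and the cell-by-cell virtual invariant recorded by $Z^-$.
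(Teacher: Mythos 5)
The paper gives no proof of this corollary: it is stated as an immediate consequence of Proposition~\ref{prop:bbbowvar}, whose own proof is deferred to standard references. Your argument for part~(2) --- filtrable affine paving by the $S_x$, scissor relations giving $[\mathcal{M}]=\sum_x\mathbb{L}^{\dim S_x}$, then the specialization $\mathbb{L}\mapsto t^2$ --- is exactly the intended one, and your overall routing for part~(1) through the compact core and the homotopy retract is also the right architecture.

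There is, however, one step in part~(1) that is wrong as written: the claim that vanishing of odd cohomology plus freeness identifies the \emph{virtual} Poincar\'e polynomial of the non-compact variety $\mathcal{M}(d,e,f)$ with its topological one. For a smooth non-compact variety this fails ($\mathbb{A}^1$ has virtual Poincar\'e polynomial $t^2$ but topological Poincar\'e polynomial $1$), and indeed your own part~(2) computes the virtual Poincar\'e polynomial of $\mathcal{M}$ to be $\sum_x t^{2\dim S_x}$, which is the $Z^+$-series, not the $Z^-$-series; the two genuinely differ (compare Corollary~\ref{cor:Zminus,m=1} with Theorem~\ref{thm:m1motivic}). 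The identity in part~(1) must therefore be read with $P_t(\mathcal{M})$ denoting the ordinary (topological) Poincar\'e polynomial --- this is how the paper itself uses it, cf.\ the phrase ``the Poincar\'e polynomial of a retract of $\mathcal{M}(d,e,f)$'' in Theorem~\ref{thm:main4} and the duality relation between $Z(q,t)$ and $\mathcal{Z}(q)$ in Example~\ref{exe2f2}. With that reading your argument closes correctly: purity is invoked for the \emph{compact} core $\pi^{-1}([0])$, where the affine paving by the $U_x$ gives free, evenly-graded homology with Poincar\'e polynomial $\sum_x t^{2\dim U_x}$, and Proposition~\ref{prop:bbbowvar}(4) transports this to the ordinary Betti numbers of $\mathcal{M}$. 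So the only repair needed is to delete the virtual-equals-topological claim for $\mathcal{M}$ itself and make it for the core instead.
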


\begin{remark}
In \cite[Section~4.3]{ji2023bow} a finite type A example is provided (which can naturally be realized as affine type A) where $\bigcup U_x \varsubsetneq \pi^{-1}([0])$.
\end{remark}

\begin{proposition} \label{prop:MaxCellCircleCompact}
Suppose that $\mathcal{M}$ has a maximal dimensional $(+)$-cell, or, equivalently, a zero dimensional $(-)$-cell. Then the action on $\mathcal{M}$ is circle compact and \eqref{eq:cellscover} holds.
\end{proposition}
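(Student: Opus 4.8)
The plan is to deduce the global covering property \eqref{eq:cellscover} from the single local hypothesis by pushing everything down to the affine base $\mathcal{M}_0$ and exploiting that $\pi$ is proper.

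First I would record the equivalence of the two hypotheses. Since the fixed points are isolated and $\mathcal{M}^{\gamma(\mathbb{C}^{\ast})}=\mathcal{M}^T$, the cocharacter $\gamma$ acts on the tangent space $T_x\mathcal{M}$ at a fixed point $x$ with no zero weights, so $T_x\mathcal{M}=T_x^+\oplus T_x^-$ splits into its positive- and negative-weight parts. By Bia\l ynicki-Birula $S_x\cong T_x^+$ and $U_x\cong T_x^-$ as affine spaces, and $\dim T_x^++\dim T_x^-=\dim\mathcal{M}$; hence $\dim S_{x_0}=\dim\mathcal{M}$ iff $U_{x_0}=\{x_0\}$, which is the asserted equivalence. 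Fix such a point $x_0$. Then $S_{x_0}$ is the attracting set of an \emph{attracting} fixed point: a $\gamma$-stable neighborhood $V$ of $x_0$ is $\gamma$-equivariantly linearized with only positive weights, so $V\subseteq S_{x_0}$ and $S_{x_0}=\bigcup_{t\in\mathbb{C}^{\ast}}\gamma(t)V$ is open, hence dense because $\mathcal{M}$ is irreducible.

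Next, for $\bigcup_x S_x=\mathcal{M}$, I would reduce to $\mathcal{M}_0$. Let $A\subseteq\mathcal{M}_0$ be the locus of $z$ for which $\lim_{t\to 0}\gamma(t)\cdot z$ exists; writing $\mathcal{M}_0=\operatorname{Spec}R$ with $R$ graded by $\gamma$-weight, $A$ is the common zero locus of the negative-weight part $R_{<0}$, hence Zariski closed. Every point of $S_{x_0}$ flows to $x_0$, so $\pi(S_{x_0})$ flows to $\pi(x_0)$ and thus $\pi(S_{x_0})\subseteq A$; density of $S_{x_0}$ then gives $\pi(\mathcal{M})\subseteq\overline{\pi(S_{x_0})}\subseteq A$. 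Therefore every $y\in\mathcal{M}$ has $\pi(y)$ with a limit as $t\to 0$ in $\mathcal{M}_0$, and the valuative criterion of properness applied to the proper map $\pi$ along the orbit $t\mapsto\gamma(t)y$ lifts this limit to $\mathcal{M}$. The limit is $\gamma$-fixed, hence $T$-fixed, so $y\in S_x$ for some $x$; this proves $\bigcup_x S_x=\mathcal{M}$.

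Finally, for $\bigcup_x U_x=\pi^{-1}([0])$ I would use two inputs. The structural fact that $\mathcal{M}_0$ is a cone contracted to its unique fixed point $[0]$ by the scaling cocharacter gives $\pi(\mathcal{M}^T)=\{[0]\}$, since every $T$-fixed point of $\mathcal{M}$ is fixed by the scaling and $\pi$ is equivariant. Then $\pi^{-1}([0])$ is complete, being a fibre of the proper map $\pi$; on a complete variety every point has a limit as $t\to\infty$, and as $\pi^{-1}([0])$ is closed and $\gamma$-stable these limits are fixed points inside it, giving $\pi^{-1}([0])\subseteq\bigcup_x U_x$. Conversely, if $y\in U_x$ then $\pi(y)\in A$ (so negative-weight functions vanish there) and $\pi(y)$ has a limit as $t\to\infty$ equal to $\pi(x)=[0]$, which forces the positive-weight functions to vanish at $\pi(y)$ as well; hence $\pi(y)$ is $\gamma$-fixed and equals its own forward limit $[0]$, so $y\in\pi^{-1}([0])$. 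I expect the main obstacle to be exactly the passage from the local full-dimensionality hypothesis to the global contraction of $\mathcal{M}_0$: the crux is that the attracting locus $A$ is closed, so density of $\pi(S_{x_0})$ upgrades to $\pi(\mathcal{M})\subseteq A$, after which properness is formal; the subsidiary input $\pi(\mathcal{M}^T)=\{[0]\}$ (all torus fixed points ``supported at the cone point'') is the structural fact I would isolate carefully, after which the argument parallels the standard quiver-variety case recorded in Proposition~\ref{prop:bbbowvar}.
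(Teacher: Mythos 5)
Your proof is correct and follows essentially the same route as the paper's: use the dense full-dimensional $(+)$-cell, push it forward to see that (the closure of) $\pi(\mathcal{M})$ is contracted by $\gamma$ to the single point $\pi(x_0)=[0]$, hence is a cone, and then lift the resulting limits back to $\mathcal{M}$ using properness of $\pi$. The paper's version is a three-line sketch of exactly this argument, so your write-up mainly supplies details it omits (closedness of the attracting locus, the valuative criterion, and both inclusions for $\bigcup_x U_x=\pi^{-1}([0])$), while treating the identification of the cone vertex with $[0]$ at the same level of brevity as the paper does.
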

\begin{proof}
By the assumption there is a dense orbit of the torus action on $\mathcal{M}_{0}$ such that all points in this orbit flow to the image, say 0, of the fixed point of the above cells. Therefore, $[0]=0$. As the closure of this orbit is $\mathcal{M}_{0}$, we obtain that $\mathcal{M}_{0}$ is a cone for the $\CC^{\ast}$-action. As a consequence, $\lim_{t \to 0} t.x$ exist for all $x \in \mathcal{M}$.
\end{proof}

\medskip

\noindent{\em The choice of the one-parameter subgroup.} In Sections \ref{subsection:Poincarem1}, \ref{sec:Poincare2}, and \ref{sec:Poincare3} we will prove formulas for the $Z^+, Z^-$ series, using a specific choice of one-parameter subgroup $\gamma$. Namely, let
\[
\gamma:\C^* \to  T=\C^*_{t_1} \times \C^*_{t_2}\times \left( \C^* \right)^{\text{D5 branes}}, 
\qquad\qquad
t \mapsto  (t^{m_1},t^{m_2},t^{r_1},\dots,t^{r_n}).
\]
For generic weights $m_1, m_2, r_{\alpha}$ the Zariski closure of $\gamma(\mathbb{C}^{\ast})$ is the whole $T$, and the fixed point set of $\gamma(\mathbb{C}^{\ast})$ coincides with that of $T$.
Our choice for $\gamma$ will be
\begin{equation}\label{eq:subgroup_choice} 
m_2 \gg r_1 > \dots > r_n \gg m_1 > 0. 
\end{equation}
This choice of one-parameter subgroup is consistent with the analogous choice for quiver varieties in \cite{nakajima2005instanton}.

\subsection{Zastavas ($m=1$)}
\label{subsection:Poincarem1}

In this section we assume that the brane diagram contains only one NS5 brane. 

\begin{theorem} 
\label{thm:poincare_zastava}
Let $m=1$. Then for $\mathcal{M}=\mathcal{M}(d,e,f)$ we have 
\[P^{-}_t(\mathcal{M})=\sum_{(B_1,\dots,B_n)} \prod_{\beta=1}^n t^{2\left(n|Y_{\beta}|-\beta l(Y_\beta) + \sum_{1 \leq \alpha < \beta\leq n} \binom{|e_{\beta}-e_{\alpha}-1|}{2}\right)} 
\]
where the summation runs over $n$-tuples of extended Young diagrams $(B_1,\dots,B_n)$ with $d$ fixed, $(Y_1,\dots,Y_n)$ are their classical parts and $l(Y_i)$ is the number of columns of $Y_i$.
\end{theorem}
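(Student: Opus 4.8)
The plan is to realize $P^-_t(\mathcal{M})$ as a weighted count of fixed points: since $P^-_t(\mathcal{M})\coloneqq\sum_x P_t(U_x)$ and each $(-)$-attracting set $U_x$ is an affine space $\cong\C^{\dim U_x}$ by Bia\l ynicki-Birula theory, its Borel-Moore Poincaré polynomial is $P_t(U_x)=t^{2\dim U_x}$, where $\dim U_x$ is the number of $\gamma$-negative weights of the tangent representation $T_x\mathcal{M}\in K_T(\pt)$. As torus fixed points correspond bijectively to $n$-tuples of extended Young diagrams $(B_1,\dots,B_n)$ with classical parts $Y_\beta$, the theorem reduces to proving the weight-count identity $\dim U_x=\sum_{\beta=1}^n\bigl(n|Y_\beta|-\beta\, l(Y_\beta)\bigr)+\sum_{1\le\alpha<\beta\le n}\binom{|e_\beta-e_\alpha-1|}{2}$ and then summing $t^{2\dim U_x}$ over all tuples $(B_1,\dots,B_n)$ with $d$ fixed.

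First I would write $T_x\mathcal{M}=\sum_{\alpha,\beta}N_{\alpha,\beta}\,\u_\beta\u_\alpha^{-1}$ as in Corollary~\ref{cor:kertaumodimsigma} and insert the $m=1$ character of Proposition~\ref{prop:Nalphabetam1}, which presents $N_{\alpha,\beta}$ as an arm–leg sum over $Y_\alpha$, an arm–leg sum over $Y_\beta$, and the finite triangular set $R^{e_\alpha,e_\beta}_{\alpha,\beta}$ of \eqref{eq:Rdef}. With the chamber \eqref{eq:subgroup_choice}, the $\gamma$-weight $s_1m_1+s_2m_2+\mathrm{wt}(\u_\beta\u_\alpha^{-1})$ of a monomial $t_1^{s_1}t_2^{s_2}\u_\beta\u_\alpha^{-1}$ has sign governed first by the $t_2$-exponent $s_2$ (as $m_2$ dominates), then, when $s_2=0$, by the sign of $\beta-\alpha$ (all $r_i>0$, so $\mathrm{wt}(\u_\beta\u_\alpha^{-1})>0$ iff $\beta>\alpha$), and only lastly by $s_1m_1$.

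Next I would do the two counts separately. The $Y_\alpha$-monomials carry $t_2$-exponent $a_{Y_\alpha}(s)+1\ge1$, hence are always $(+)$-directions and never contribute. The $Y_\beta$-monomials carry $t_2$-exponent $-a_{Y_\beta}(t)\le0$: the $|Y_\beta|-l(Y_\beta)$ boxes with $a_{Y_\beta}(t)\ge1$ have strictly negative $t_2$-weight and contribute for all $n$ values of $\alpha$, whereas the $l(Y_\beta)$ column-bottom boxes (arm $=0$) have vanishing $t_2$-weight and contribute exactly for the $n-\beta$ indices $\alpha>\beta$. This gives $n(|Y_\beta|-l(Y_\beta))+(n-\beta)l(Y_\beta)=n|Y_\beta|-\beta\, l(Y_\beta)$, the per-$\beta$ Young contribution, matching the known quiver-variety count.

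The remaining contribution is that of the triangles $R^{e_\alpha,e_\beta}_{\alpha,\beta}$, and the key structural observation is that these depend only on $e_\alpha,e_\beta$, not on the diagrams, so their total is a fixed-point-independent constant that factors out of the sum over $(B_1,\dots,B_n)$. For each unordered pair $\{\alpha,\beta\}$ I would enumerate the negative-weight lattice points among $R^{e_\alpha,e_\beta}_{\alpha,\beta}$ and $R^{e_\beta,e_\alpha}_{\beta,\alpha}$ via the explicit shapes \eqref{eq:Tdef}; exactly one of the two triangles supplies negative-weight monomials (the other has strictly positive $t_2$-exponents), and counting them should produce the triangular number $\binom{|e_\beta-e_\alpha-1|}{2}$. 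I expect this enumeration to be the main obstacle: one must handle the boundary lattice points where the $t_2$-weight vanishes (whose sign is decided by $\beta-\alpha$) together with the $t_1^{\pm1}$ prefactors of \eqref{eq:Rdef}, and it is precisely this interplay that is responsible for the shift by $1$ and the absolute value $|e_\beta-e_\alpha-1|$. Combining the Young and triangle counts yields $\dim U_x$, and summing $t^{2\dim U_x}$ over all $n$-tuples of extended Young diagrams with $d$ fixed gives the stated formula.
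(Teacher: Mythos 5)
Your proposal follows essentially the same route as the paper's proof: identify $P_t(U_x)=t^{2\dim U_x}$ with $\dim U_x$ the number of $\gamma$-negative weights of $T_x\mathcal{M}$, insert the $m=1$ tangent character of Proposition~\ref{prop:Nalphabetared}/\ref{prop:Nalphabetam1}, and analyze the lexicographic chamber \eqref{eq:subgroup_choice} term by term ($Y_\alpha$-sum never negative, $Y_\beta$-sum counted by arm lengths, $R$-triangles giving the $e$-dependent constant). Two caveats. First, a sign slip: since the chamber has $r_1>\dots>r_n$ \emph{decreasing}, $\mathrm{wt}(\u_\beta\u_\alpha^{-1})=r_\beta-r_\alpha$ is negative iff $\alpha<\beta$, not iff $\alpha>\beta$ as you assert; so the arm-zero boxes of $Y_\beta$ contribute for the $\beta-1$ indices $\alpha<\beta$, yielding $n|Y_\beta|-(n-\beta+1)l(Y_\beta)$ per fixed point (as in the paper's proof) rather than $n|Y_\beta|-\beta l(Y_\beta)$. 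This only relabels which fixed point gets which cell — the two exponent assignments differ by the reversal $\beta\mapsto n-\beta+1$ of the tuple, and the binomial term is symmetric in the pair, so the total polynomial $P^-_t(\mathcal{M})$ is unchanged; still, you should be aware your intermediate claim contradicts the stated chamber. Second, the triangle enumeration you defer as "the main obstacle" is genuinely the remaining content: one checks from \eqref{eq:Rdef} and \eqref{eq:Tdef} that for each unordered pair $\{\alpha,\beta\}$ exactly one of $R^{e_\alpha,e_\beta}_{\alpha,\beta}$, $R^{e_\beta,e_\alpha}_{\beta,\alpha}$ carries monomials with the required ($\le 0$, resp.\ $<0$) $t_2$-exponent, and in either case their number is $\binom{|e_\beta-e_\alpha-1|}{2}$ (e.g.\ for $\alpha<\beta$, $e_\alpha\ge e_\beta$ all $\binom{e_\alpha-e_\beta+1}{2}$ points of $T_{e_\beta-e_\alpha}$ count, while for $e_\alpha<e_\beta$ one must discard the $s_2=0$ boundary points of $t_1^{-1}T_{e_\alpha-e_\beta+1}$, leaving $\binom{e_\beta-e_\alpha-1}{2}$). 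This is routine but needs to be written out; without it the $e$-dependent part of the formula is unproved.
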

\begin{remark}
\begin{enumerate}
    \item In the exponent of $t$ the first two terms depend only on the Young diagram (quotient) part of the $B_{\beta}$, while the terms in the last summation depend only on the margin vector $e$. 
\item Theorem~3.8 of \cite{nakajima2004lectures} is the special case of our
Theorem~\ref{thm:poincare_zastava} corresponding to $e_i=0$ for all $i$. In this case the summation term in the exponent of $t$ vanishes. 
\end{enumerate}
\end{remark}

Note that since $m=1$ the margin vector $e$ uniquely determines the contingency table $c$ as $c_{i1}=e_1$, $1 \leq i \leq n$. Moreover, $f=f_1=\sum_i e_i$. As a consequence, we have the following.
\begin{corollary} \label{cor:Zminus,m=1}
For $m=1$ and $e \in \ZZ^n$ we have
\[Z^-(q,t) = 
q^A
\cdot 
t^{2B}
\cdot \prod_{i=1}^n \prod_{l \geq 1}^{\infty} \frac{1}{1-t^{2(nl-i)}q^l}\]
where 
\[
A=\sum_{\alpha=1}^n \frac{e_\alpha(e_\alpha-1)}{2}, \qquad\qquad
B=\sum_{1\leq \alpha<\beta\leq n} \binom{|e_\beta-e_\alpha-1|}{2}.
\]
\end{corollary}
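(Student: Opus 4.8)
The plan is to feed the closed expression for $P^-_t(\mathcal{M})$ from Theorem~\ref{thm:poincare_zastava} into the definition $Z^-(q,t)=\sum_{d\geq 0}P^-_t(\mathcal{M}(d,e,f))q^d$ and to recognise the outcome as a product of elementary partition generating functions. First I would observe that the summand $\sum_{1\leq\alpha<\beta\leq n}\binom{|e_\beta-e_\alpha-1|}{2}$ appearing inside the product over $\beta$ does not depend on the Young diagrams at all, and that letting $\beta$ range over $1,\dots,n$ sweeps out precisely the set of pairs $\alpha<\beta$. Hence these terms contribute the single global factor $t^{2B}$ with $B=\sum_{1\leq\alpha<\beta\leq n}\binom{|e_\beta-e_\alpha-1|}{2}$, and the theorem rewrites as $P^-_t(\mathcal{M})=t^{2B}\sum_{(B_1,\dots,B_n)}\prod_{\beta=1}^n t^{2(n|Y_\beta|-\beta l(Y_\beta))}$, the sum being over tuples whose total dimension equals the prescribed $d$.

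The key arithmetic input is to express $d$ through the classical parts $Y_\beta$. Since $m=1$ the contingency table is forced to be $c_{i1}=e_i$, so the unique $(e,f)$-core has dimension $A=\sum_\alpha e_\alpha(e_\alpha-1)/2$ by Lemma~\ref{lem:dij}. Under the bijection $M(e,f)\leftrightarrow C(e,f)\times P^{n}$ each fixed point is recorded by the $n$-tuple of partitions measuring how far the $1$'s have been shifted to the left of the core, and these partitions are exactly the classical parts $Y_1,\dots,Y_n$ of the extended Young diagrams; since each leftward shift raises $d$ by one, I would conclude $d=A+\sum_{\beta=1}^n|Y_\beta|$. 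Substituting this into the generating series and pulling out the constant $q^A t^{2B}$ turns $Z^-(q,t)$ into $q^A t^{2B}\prod_{\beta=1}^n\bigl(\sum_{Y} q^{|Y|}\, t^{2n|Y|}\, t^{-2\beta l(Y)}\bigr)$, because in the full series over all $d$ the tuples $(Y_1,\dots,Y_n)$ range independently over partitions, so the sum factors across $\beta$.

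It then remains to evaluate each factor. Writing $x=qt^{2n}$ and $z=t^{-2\beta}$, the inner sum is the standard two-variable partition series $\sum_Y x^{|Y|}z^{l(Y)}$ weighting by size and by number of columns; conjugating diagrams (columns of $Y$ become rows of $Y'$) reduces it to the familiar $\sum_\mu x^{|\mu|}z^{\ell(\mu)}=\prod_{k\geq 1}(1-zx^k)^{-1}$, which here is $\prod_{k\geq 1}\bigl(1-t^{2(nk-\beta)}q^k\bigr)^{-1}$. Taking the product over $\beta=1,\dots,n$ and renaming $\beta\mapsto i$, $k\mapsto l$ yields exactly the asserted product, and one notes that $nl-i\geq 0$ for $1\leq i\leq n$, $l\geq 1$, so the expression is a well-defined power series in $q$. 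I expect the only delicate point to be the bookkeeping identity $d=A+\sum_\beta|Y_\beta|$: one must match the quotient partitions of the core--quotient decomposition with the classical parts of the extended Young diagrams and verify that the triangle parts $T_{e_\beta}$ account for precisely the core dimension $A$. Everything after that is formal manipulation of generating functions.
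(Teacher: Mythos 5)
Your proposal is correct and follows exactly the route the paper intends: the paper's own ``proof'' of Corollary~\ref{cor:Zminus,m=1} is just the remark that for $m=1$ the contingency table is forced to be $c_{i1}=e_i$ (so the core contributes $q^A$ by Lemma~\ref{lem:dij}) together with Theorem~\ref{thm:poincare_zastava}, and your computation supplies precisely the omitted details, namely the identity $d=A+\sum_\beta|Y_\beta|$ from the core--quotient decomposition and the factorization of $\sum_Y x^{|Y|}z^{l(Y)}$ into $\prod_{k\geq 1}(1-zx^k)^{-1}$. The only point worth flagging is that your bookkeeping silently fixes the intended reading of the exponent in Theorem~\ref{thm:poincare_zastava} (the pair-sum $\sum_{1\leq\alpha<\beta\leq n}$ contributing $B$ once globally rather than once per factor), which is indeed the reading consistent with the corollary.
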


\begin{remark}
    The $B=0$ condition implies circle compactness of the varieties $\mathcal{M}(d,e,(\sum e_i))$. 
    The formula above shows that $B=0$ is equivalent to $e_1\leq e_2\leq \ldots \leq e_n\leq e_1+2$, that is, $e$ being a 2-bounded non-decreasing sequence, cf. Theorem~\ref{thm:cbbcelldec_cover}.
\end{remark}

\begin{example} \label{ex:e4f1}
    For $e=(-3,2,-3,4)$ we obtain
    \begin{multline*}
    Z^-(q,t)=
    q^{19}t^{102}+q^{20}(t^{102}+t^{104}+t^{106}+t^{108})+
    q^{21}(t^{102} + t^{104} + \\2t^{106} + 2t^{108} +3t^{110} + 2t^{112} + 2t^{114} + t^{116})+O(q^{21})=
    q^{19}t^{102}\sum_{s=0}^\infty P^-_s(t)q^s.
    \end{multline*}
Computer evidence suggests that $P^-_s(t)$ stabilizes 
     \[
     P^-_s(t)\to \prod_{r=1}^{\infty} \frac{1}{(1-t^{2r})}
     \qquad\qquad 
     \text{ as } s\to\infty
     \]
     in the following sense: $P^-_s(t)$ is equal to $\prod_{r=1}^{\infty} \frac{1}{(1-t^{2r})}$ up to order $t^{2s}$. 
\end{example}

     
\begin{proof}[{Proof of Theorem~\ref{thm:poincare_zastava}}]

Let $x \in \mathcal{M}$ be a fixed point of the $T$-action. Then $T_x\mathcal{M}$ has a $T$-module
structure, and has an induced $\mathbb{C}^{\ast}$-module structure via $\gamma$.
By our choice \eqref{eq:subgroup_choice} of $\gamma$ the negative weight space for the $\gamma$-action is the direct sum of weight spaces for the $T$-action such that one of the following hold:
\begin{itemize}
\item the weight of $t_2$ is negative,
\item the weight of $t_2$ is zero and the weight of $u_1$ is negative,
\item the weight of $t_2$, $u_1$ are zero and the weight of $u_2$ is negative,

\dots

\item the weight of $t_2$, $u_1$, $u_2$, \dots , $u_{n-1}$ are zero and the weight of $u_n$ is negative,
\item the weight of $t_2$, $u_1$, $u_2$, \dots, $u_n$ are zero and weight of $t_1$ is negative.
\end{itemize}
(The reader may find it instructive to verify that the number of such negative terms in the five expressions in \eqref{03-14-example} are $1$, $2$, $1$, $2$, $0$, respectively. In particular, for the last fixed point the negative weight space is 0, cf.~Proposition~\ref{prop:MaxCellCircleCompact}.)

We calculate the sum of dimensions of weight spaces with the above
condition in each summand of 
Theorem~\ref{thm:tangentcharm}
separately, and then sum up the contribution from
each summand. In the summand $\alpha = \beta$, the contribution is
\[|Y_{\alpha}| - l(Y_{\alpha})\]
where $l(Y_{\alpha})$ is the length (number of parts) of $Y_{\alpha}$. 
If $\alpha < \beta$, the above condition is equivalent to that the weight of $t_2$ is nonpositive. 
Hence the contribution can be written as
\[|Y_{\beta}|+ \binom{e_{\beta}-e_{\alpha}-1}{2}\]
where the binomial term is understood to be zero if $e_{\beta}-e_{\alpha}-1 \leq 0$.
If $\alpha > \beta$, the above condition is equivalent to that the weight of $t_2$ is negative. 
The contribution is hence
\[|Y_{\beta}|- l(Y_{\beta})+\binom{e_{\beta}-e_{\alpha}-1}{2}.\]
Adding up all terms we get
\[ \sum_{\beta=1}^n n|Y_{\beta}|-(n-\beta+1)l(Y_{\beta}) + \sum_{1 \leq \alpha < \beta} \binom{e_{\beta}-e_{\alpha}-1}{2}+\binom{e_{\alpha}-e_{\beta}-1}{2}. \]
\end{proof}

The $(+)$-series can be obtained similarly (the details are left to the reader).
\begin{theorem} 
\label{thm:m1motivic}

We have
\[
Z^+(q,t) = q^A \cdot t^{2B} \cdot \prod_{i=1}^n \prod_{l \geq 1}^{\infty} \frac{1}{1-t^{2(nl+i)}q^l} 
\]
where 
\[
A=\sum_{\alpha=1}^n \frac{e_\alpha(e_\alpha-1)}{2}, \qquad
B=\sum_{1\leq \alpha<\beta\leq n} F(e_\alpha-e_\beta),
\qquad
F(l)=\begin{cases}
(l+1)l/2 & \textrm{if } l\geq 0
\\
(l-1)l/2-1 & \textrm{if } l<0.
\end{cases}
\]
\end{theorem}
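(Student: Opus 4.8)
The plan is to run the proof of Theorem~\ref{thm:poincare_zastava} again, verbatim, only exchanging the roles of the two cells: since $S_x$ is the $\gamma$-attracting set, $\dim_{\CC}S_x$ equals the dimension of the \emph{positive} $\gamma$-weight subspace of $T_x\mathcal{M}$, and $P^+_t(\mathcal{M})=\sum_x t^{2\dim_{\CC}S_x}$. For the one-parameter subgroup \eqref{eq:subgroup_choice}, the sign of the $\gamma$-weight of a monomial $\u_{\beta}\u_{\alpha}^{-1}t_1^at_2^b$ occurring in Theorem~\ref{thm:tangentcharm} is read off lexicographically from $(b;\,r_\beta-r_\alpha;\,a)$: it is positive iff $b>0$, or $b=0$ and $\alpha>\beta$, or $b=0$, $\alpha=\beta$ and $a>0$. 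Thus the block conditions are: positivity $\iff b>0$ for the block $\alpha<\beta$, $\iff b\ge 0$ for the block $\alpha>\beta$, and $\iff b>0$ or ($b=0,a>0$) on the diagonal.

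First I would carry out the same block-by-block bookkeeping on the character $\sum_{\alpha,\beta}N_{\alpha,\beta}\u_{\beta}\u_{\alpha}^{-1}$ of Theorem~\ref{thm:tangentcharm}, noting that for $m=1$ the $\delta$-symbols are all $1$ so $N_{\alpha,\beta}$ is exactly as in Proposition~\ref{prop:Nalphabetam1}. The diagonal $\alpha=\beta$ block is the usual Hilbert-scheme tangent space; counting its positive part gives $|Y_\alpha|+l(Y_\alpha)$ (the symplectic complement of the $(-)$ count $|Y_\alpha|-l(Y_\alpha)$, which one also checks directly: the first sum $\sum_{s\in Y_\alpha}$ has $b=a(s)+1>0$ and contributes $|Y_\alpha|$, while the zero-arm cells of the second sum contribute $l(Y_\alpha)$ through the $a>0$ tiebreak). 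For $\alpha<\beta$ positivity forces $b>0$, so only $\sum_{s\in Y_\alpha}$ survives, contributing $|Y_\alpha|$; for $\alpha>\beta$ positivity permits $b\ge 0$, so the first sum gives $|Y_\alpha|$ and the zero-arm cells of $Y_\beta$ add $l(Y_\beta)$. Summing over all $\alpha,\beta$ yields $\sum_{\beta}\bigl(n|Y_\beta|+(n-\beta+1)l(Y_\beta)\bigr)$, i.e.\ the exponent of Theorem~\ref{thm:poincare_zastava} with the sign of every $l(Y_\beta)$ reversed.

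The genuinely new input, and the step I expect to be the main obstacle, is the contribution of the triangle $R$-terms of \eqref{eq:Rdef}. For a fixed pair $\alpha<\beta$ one must add the count of lattice points of $R^{e_\alpha,e_\beta}_{\alpha,\beta}$ with $s_2>0$ to the count of lattice points of $R^{e_\beta,e_\alpha}_{\beta,\alpha}$ with $s_2\ge 0$, using the explicit triangles $T_i$ of \eqref{eq:Tdef}. A direct count in the three cases $e_\alpha=e_\beta$, $e_\alpha>e_\beta$, $e_\alpha<e_\beta$ should show that the combined total equals $F(e_\alpha-e_\beta)$; the asymmetry of the two positivity conventions ($>0$ versus $\ge 0$) across the two blocks is precisely what produces the extra $-1$ in $F(l)$ for $l<0$ (it comes from the $s_2=0$ edge of the $i<0$ triangle, which is admitted by the $\ge 0$ block but not the $>0$ block). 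Getting these boundary contributions right, and watching the $-1$ emerge only in the $e_\alpha<e_\beta$ case, is the one delicate point.

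Finally, for $m=1$ the contingency table is forced ($c_{i1}=e_i$), so $d=A+\sum_\beta|Y_\beta|$ with $A=\sum_\alpha e_\alpha(e_\alpha-1)/2$, and the triangle total $B=\sum_{\alpha<\beta}F(e_\alpha-e_\beta)$ factors out as $t^{2B}$. Substituting into $Z^+(q,t)=\sum_d q^d\sum_x t^{2\dim_{\CC}S_x}$ and applying, to each $\beta$, the elementary partition identity
\[
\sum_{Y} t^{2(n|Y|+i\,l(Y))}q^{|Y|}=\prod_{l\geq 1}\frac{1}{1-t^{2(nl+i)}q^l}
\]
with $i=n-\beta+1$ gives the claimed product after reindexing $i=n-\beta+1$. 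Alternatively one could deduce the formula from Corollary~\ref{cor:Zminus,m=1} through the symplectic functional equation $P^-_t(\mathcal{M})=t^{2\dim_{\CC}\mathcal{M}}\,P^+_{1/t}(\mathcal{M})$ (valid since generic $\gamma$ has no zero weights, so $\dim S_x+\dim U_x=\dim_{\CC}\mathcal{M}$), but keeping track of $\dim_{\CC}\mathcal{M}(d,e,f)$ as a function of $d$ makes the direct weight count the cleaner route.
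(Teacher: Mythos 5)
Your proposal is correct and is exactly the argument the paper intends: the paper's "proof" of this theorem is the single remark that the $(+)$-series is obtained similarly to Theorem~\ref{thm:poincare_zastava}, and you have filled in precisely those details, with the positive-weight counts of the $Q$-terms ($|Y_\alpha|+l(Y_\alpha)$ on the diagonal, $|Y_\alpha|$ for $\alpha<\beta$, $|Y_\alpha|+l(Y_\beta)$ for $\alpha>\beta$) and the triangle count $l(l+1)/2+(-l-1)=l(l-1)/2-1$ in the $l<0$ case all checking out against \eqref{eq:Rdef} and \eqref{eq:Tdef}. No gaps.
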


\subsection{The case $n=1$} \label{sec:Poincare2}
In another direction we deduce explicit formulas when $m$ is arbitrary but there is only one D5 brane.  Again, the margin vectors in this case completely determine the core via $c_{1j}=f_j$, $1 \leq j \leq m$.
\begin{proposition} 
\label{prop:n1marbgenfn}
For $n=1$ and $f \in \ZZ^m$ we have
\[Z^-(q,t) = 
q^A
\cdot \prod_{l \geq 1}^{\infty} \frac{1}{(1-t^{2l-2}q^l)(1-t^{2l}q^l)^{m-1}} \]
and
\[Z^+(q,t) = 
q^{A} \cdot \prod_{l \geq 1}^{\infty} \frac{1}{(1-t^{2l+2}q^l)(1-t^{2l}q^l)^{m-1}} \]
where 
\[
A=\sum_{i=1}^m \frac{f_i(f_i-1)}{2}. \]
\end{proposition}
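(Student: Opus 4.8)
The plan is to follow the strategy of the proof of Theorem~\ref{thm:poincare_zastava}, now with the roles of the single D5 brane and the $m$ NS5 branes exchanged. Since $n=1$ there is a single index $\alpha=\beta=1$, so the tangent character of Theorem~\ref{thm:tangentcharm} collapses to the single summand $N_{1,1}(t_1,t_2)$; moreover the factor $\u_\beta\u_\alpha^{-1}$ is trivial, and the residual piece $R^{e_1,e_1}_{1,1}=T_0$ from \eqref{eq:Rdef} is empty. Hence at the fixed point indexed by the extended Young diagram $B_1=Y_1+T_{e_1}$ the tangent space is
\[
N_{1,1}=\sum_{\substack{s\in Y_1\\ h(s)\equiv 0\,(m)}}\left(t_1^{-l(s)}t_2^{a(s)+1}+t_1^{l(s)+1}t_2^{-a(s)}\right),
\]
where $h(s)=l(s)+a(s)+1$ and the congruence condition is forced by the two deltas $\delta^{(m)}_{\u_{Y_1,Y_1}(s),0}$.

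First I would read off the cell dimensions. Under the one-parameter subgroup \eqref{eq:subgroup_choice}, the $u$-weight of every monomial above vanishes, so membership in the negative weight space is decided by the sign of the $t_2$-weight, with the $t_1$-weight as tie-breaker. The ``arm'' monomials $t_1^{-l(s)}t_2^{a(s)+1}$ have strictly positive $t_2$-weight and never contribute; a ``leg'' monomial $t_1^{l(s)+1}t_2^{-a(s)}$ contributes exactly when $a(s)>0$, since for $a(s)=0$ its $t_2$-weight is zero while its $t_1$-weight is positive. Therefore $\dim U_x=\#\{s\in Y_1:\ h(s)\equiv 0\,(m),\ a(s)>0\}$, and symmetrically $\dim S_x=\#\{s:h(s)\equiv0\,(m)\}+\#\{s:h(s)\equiv0\,(m),\,a(s)=0\}$, which will produce the $(+)$-formula with $t^{2l+2}$ in place of $t^{2l-2}$.

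Next I would pass to the core--quotient coordinates of Section~\ref{sec:gen_series}. For $n=1$ the core is rigid ($c_{1j}=f_j$), so by Lemma~\ref{lem:dij} its $d$-value is $A=\sum_j\binom{f_j}{2}$, and the bijection $M(e,f)\leftrightarrow C(e,f)\times P^{m}$ identifies a fixed point with an $m$-tuple of partitions $(\lambda^{(1)},\dots,\lambda^{(m)})$, the $m$-quotient of $Y_1$, with $d=A+\sum_j|\lambda^{(j)}|$. The classical $m$-quotient correspondence matches the boxes $s\in Y_1$ with $h(s)\equiv 0\,(m)$ bijectively with the boxes of $\bigsqcup_j\lambda^{(j)}$, so $\#\{s:h(s)\equiv0\,(m)\}=\sum_j|\lambda^{(j)}|$. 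Thus each such box contributes $q$ if $a(s)=0$ and $q\,t^{2}$ if $a(s)>0$, giving
\[
Z^-(q,t)=q^A\sum_{(\lambda^{(1)},\dots,\lambda^{(m)})}\ \prod_{\substack{s\in Y_1\\ h(s)\equiv0\,(m)}}\bigl(q\,t^{2}\bigr)^{[a(s)>0]},
\]
and the task is to show this equals $q^A\prod_l(1-t^{2l-2}q^l)^{-1}\prod_l(1-t^{2l}q^l)^{-(m-1)}$: the first factor should absorb the missing $t$-powers from the arm-zero boxes, the other $m-1$ the generic boxes.

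The hard part is precisely this final summation. The delicate point is that $a(s)=0$ in the ambient $Y_1$ is strictly stronger than arm-zero in a single quotient component $\lambda^{(j)}$, since it forbids beads on the other runners as well; a short bead-abacus computation gives $\dim U_x=\sum_j|\lambda^{(j)}|-\sum_{b}\lfloor L_b/m\rfloor$, where the sum is over beads $b$ and $L_b$ is the length of the gap-run immediately to the left of $b$. I would therefore establish the product formula by a generating-function argument on bead configurations with fixed $m$-core (organizing the sum by runners and using that a left gap-run of length $L$ contributes $\lfloor L/m\rfloor$ arm-zero corrections), rather than component-by-component, since the correction $N_0$ genuinely distributes across runners. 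As an independent cross-check, and an alternative route to the whole statement, note that for $n=1$ the vector $e$ is trivially an $m$-bounded non-decreasing sequence, so by Theorem~\ref{wittentheorem} $\mathcal{M}(d,e,f)$ is a Nakajima quiver variety of affine type $\hat{A}_{m-1}$ with rank-one framing, whose Poincar\'e generating series is the known Hilbert-scheme formula for the $A_{m-1}$ ALE space and matches the claimed product. The $(+)$-series follows from the identical computation applied to the positive weight spaces.
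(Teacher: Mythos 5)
Your proposal is correct and follows essentially the same route as the paper: the paper's proof likewise reduces to the single summand $N_{1,1}$ of Theorem~\ref{thm:tangentcharm}, notes that the $t_1^{e_\beta-e_\alpha}$ shift is trivial and the $R$-term vanishes, and then simply cites \cite[Lemma~4.7]{fujii2017combinatorial} and \cite[Theorem~1.3]{gyenge2018euler} for exactly the combinatorial summation you identify as ``the hard part'' (the generating function of $\#\{s: m\mid h(s),\ a(s)>0\}$ over Young diagrams with fixed $m$-core). Your cell-dimension computations and the $\sum_b\lfloor L_b/m\rfloor$ correction are right, and you correctly flag the point that ambient arm-zero is stronger than componentwise arm-zero; your cross-check via Theorem~\ref{wittentheorem} (for $n=1$ the variety is a rank-one-framed affine type $A$ quiver variety, up to the $q^A$ shift from Hanany--Witten moves) is a valid alternative way to close the argument.
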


\begin{proof} As $n=1$, in Theorem~\ref{thm:tangentcharm} we only have $\alpha=\beta=1$. Hence, the exponent of $t$ in the first term is zero. Moreover, the last term vanishes. 
The direct analog of the computation in \cite[Lemma~4.7]{fujii2017combinatorial} gives the above generating series (see also \cite[Theorem~1.3]{gyenge2018euler}). 
\end{proof}

\subsection{The case $n>1$ and $m>1$} \label{sec:Poincare3}

\subsubsection{Stabilization}

First, let us illustrate the $n,m>1$ case by an example.
\begin{example} 
\label{exe2f2}
     For $e=f=(0,0)$ we have
    \[Z(q,t)=\sum_{s=0}^\infty P_s(t)q^s,
    \qquad
    Z(q)=\sum_{s=0}^\infty \left( \mathbb{L}^{4s} P_s(\mathbb{L}^{-1/2}) \right) q^s
    \]
with
\[
\begin{array}{rl}
    P_0(t)= & 1 \\
    P_1(t)= & 1+2t^2+t^4 \\
    P_2(t)= & 1+2t^2+5t^4+5t^6+3t^8 \\
    P_3(t)= & 1+2t^2+5t^4+10t^6+13t^8+12t^{10}+5t^{12} \\
    P_4(t)= & 1+2t^2+5t^4+10t^6+20t^8+28t^{10}+33t^{12}+24t^{14}+10t^{16}, \ldots
\end{array}
  \]  
and $P_s(t)$ stabilizes 
     \[
     P_s(t)\to \prod_{r=1}^{\infty} \frac{1}{(1-t^{2r})^2}
     \qquad\qquad 
     \text{ as } s\to\infty
     \]
     in the following sense: $\deg(P_s(t))=4s$, and $P_s(t)$ agrees with \[\prod_{r=1}^{\infty} \frac{1}{(1-t^{2r})^2}=
1+2t^2+5t^4+10t^6+20t^8+36t^{10}+65t^{12}+110t^{14}+\ldots\]
     up to degree $2s$. Remarkably, the top (degree $4s$) coefficient of $P_s(t)$ is the number of partitions of $2s-2$ such that all odd parts are distinct. 
\end{example}

The stabilization phenomenon observed in Examples \ref{ex:e4f1} and \ref{exe2f2} also appears in other examples:

\begin{conjecture} 
For $e\in \Z^n, f\in \Z^m$ let $Z(q,t)=\sum_{s=0}^\infty P_s(t)q^s$. The polynomial $P_s(t)$ agrees with the series
$\prod_{r=1}^{\infty} {1}/{(1-t^{2r})^m}$ up to degree $N_s$ where $N_s\to \infty$.
\end{conjecture}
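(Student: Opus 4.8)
The plan is to deduce the stabilization from an asymptotic analysis of a fixed-point generating series rather than from a closed product formula, which by Corollary~\ref{cor:noprodnmarb} cannot exist once $n,m>1$. The guiding model is the case $m=1$, which I would settle first and then use as a template. By Corollary~\ref{cor:Zminus,m=1} we have $Z^-(q,t)=q^{A}t^{2B}\prod_{i=1}^n\prod_{l\ge 1}\frac{1}{1-t^{2(nl-i)}q^l}$, and the key observation is that the map $(i,l)\mapsto nl-i$ restricted to $1\le i\le n$, $l\ge 1$ is a bijection onto $\ZZ_{\ge 0}$, whose unique preimage of $0$ is $(i,l)=(n,1)$. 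Thus, after stripping the leading monomial $q^At^{2B}$, the series factors as $\frac{1}{1-q}\cdot\prod_{k\ge 1}\frac{1}{1-t^{2k}q^{l(k)}}$, where the first factor is $t$-free. For a fixed $t$-degree $2j$ this free factor lets one absorb arbitrarily large powers of $q$, so the coefficient of $q^{s}$ in $\frac{1}{1-q}\prod_{k\ge 1}\frac{1}{1-t^{2k}q^{l(k)}}$, truncated below $t$-degree $2j$, becomes independent of $s$ for $s$ large and equals $[t^{2j}]\prod_{k\ge 1}\frac{1}{1-t^{2k}}=[t^{2j}]\prod_{r\ge 1}\frac{1}{1-t^{2r}}$. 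Notably the parameter $n$ disappears in the limit; only $m=1$ survives, matching the conjecture.

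For general $m$ I would run the same scheme starting from the fixed-point expansion furnished by Theorem~\ref{thm:tangentcharm}, writing $Z^{-}(q,t)=\sum_x q^{d(x)}t^{2d^-(x)}$ with $d^-(x)$ the number of $\gamma$-negative weights read off from the tangent character, exactly as in the proof of Theorem~\ref{thm:poincare_zastava}. The aim is to show that, for each fixed $j$, the configurations $(B_1,\dots,B_n)$ contributing to $[t^{2j}]$ with $d(x)=s$ large are all obtained from a single ``stable shape'' by a bounded perturbation, and that the set of such perturbations is in size-preserving bijection with $m$-tuples of partitions of total size $j$. The appearance of $m$ rather than $n$ should be tracked to the congruence constraints $\delta^{(m)}_{\bullet,\bullet}$ in Theorem~\ref{thm:tangentcharm}: these organise the boxes into $m$-periodic families, and precisely $m$ of the resulting directions are $t$-free (cost one unit of $q$ but zero $t$-degree), the analogue of the single $(n,1)$ factor above.

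A second, independent route provides both a sanity check and the target for the limit. The stratification $\mathcal{M}_0=\bigsqcup_{\underline d',\lambda}\mathcal{M}^s(\underline d')\times S^{\lambda}\!\big(\CC^2\setminus\{0\}/(\ZZ/m\ZZ)\big)$, together with semismallness and properness of $\pi:\mathcal{M}\to\mathcal{M}_0$, suggests that in the stable range (large $d$, bounded cohomological degree) the cohomology is governed by the most generic strata, i.e.\ by symmetric products of the smooth locus of $\CC^2/(\ZZ/m\ZZ)$. The minimal resolution of the $A_{m-1}$ singularity has $b_0=1$ and $b_2=m-1$, so Göttsche's formula for its Hilbert schemes stabilises, after removing the free $\tfrac{1}{1-q}$ factor and setting $q\to1$, to $\left(\prod_{r\ge1}\frac{1}{1-t^{2r}}\right)^{1+(m-1)}=\prod_{r\ge1}\frac{1}{(1-t^{2r})^{m}}$. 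This reproduces the conjectured limit and explains the exponent $m=b_0+b_2$ geometrically.

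The hard part will be the two sources of non-uniformity. First, I must control the off-diagonal terms ($\alpha\neq\beta$) and the higher-block ($l\ge 2$) contributions in Theorem~\ref{thm:tangentcharm} uniformly in $s$: concretely, to show that any nontrivial such contribution forces the $t$-degree above a bound $N_s$ that tends to infinity with $s$, which is exactly what makes the stabilization range grow. Second, since the conjecture concerns the full virtual Poincaré polynomial $P_s(t)=P_t(\mathcal{M}(s,e,f))$, whereas the clean fixed-point sum computes $Z^{\pm}$, I must handle the non-circle-compact cases in which the Bia\l ynicki--Birula cells need not cover (cf.\ the Remark after Corollary~\ref{cor:bbbowvarfam} and Proposition~\ref{prop:MaxCellCircleCompact}): either by showing that the uncovered locus has codimension large enough to leave the stable low-degree coefficients unchanged, or by computing its motivic class via the same stratification. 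I expect the effective uniform degree bound $N_s\to\infty$ to be the genuine obstacle; the normalisation by the leading monomial $q^At^{2B}$, visible in Examples~\ref{ex:e4f1} and~\ref{exe2f2}, also needs to be pinned down so that the stabilized series begins in degree $0$.
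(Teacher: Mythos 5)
The statement you are proving is stated in the paper as a \emph{conjecture}: the authors offer no proof, only the computer evidence of Examples~\ref{ex:e4f1} and~\ref{exe2f2}, so there is no argument of theirs to compare yours against. Your proposal is likewise not a proof but a research plan, and you say as much. The one part that is actually complete is the $m=1$ computation for $Z^-$: the observation that $(i,l)\mapsto nl-i$ bijects $\{1,\dots,n\}\times\ZZ_{\geq 1}$ onto $\ZZ_{\geq 0}$, so that Corollary~\ref{cor:Zminus,m=1} contains exactly one $t$-free factor $\frac{1}{1-q}$ and the remaining factors contribute a bounded $q$-power for each fixed $t$-degree, is correct and does yield stabilization of $Z^-(q,t)$ to $\prod_{r\geq 1}(1-t^{2r})^{-1}$ with $N_s\to\infty$. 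That is a genuine (if special) result, and the Göttsche/$b_0+b_2$ heuristic for why the exponent should be $m$ in general is the right physical picture.

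The gaps are the ones you name yourself, and they are real. First, for general $n,m>1$ there is no product formula (Corollary~\ref{cor:noprodnmarb}), so the entire weight of the argument falls on the claim that all configurations contributing to $[t^{2j}]q^s$ for large $s$ arise from a single stable shape by perturbations counted by $m$-tuples of partitions; nothing in Theorem~\ref{thm:tangentcharm} is shown to force this, and the off-diagonal terms $N_{\alpha,\beta}$ ($\alpha\neq\beta$) depend on \emph{pairs} of extended Young diagrams, so a uniform lower bound on their $t$-degree as $s\to\infty$ is exactly the hard combinatorial estimate you defer. Second, the conjecture concerns $Z(q,t)$, the series of virtual Poincar\'e polynomials, whereas your fixed-point method computes $Z^{\pm}$; these agree only under the covering condition \eqref{eq:cellscover}, which fails already for $m=1$ with $e$ not [move-1]-equivalent to a $2$-bounded non-decreasing sequence (e.g.\ $e=(-3,2,-3,4)$ of Example~\ref{ex:e4f1}). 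Your proposed remedies (codimension estimates on the uncovered locus, or a motivic computation via the stratification of $\mathcal{M}_0$) are plausible but not carried out, and the paper provides no tool that would make either automatic. Until these two points are settled, the proposal establishes the conjecture only in the $m=1$, circle-compact case, which is essentially already contained in Corollary~\ref{cor:Zminus,m=1}.
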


\subsubsection{Series associated with a fixed core}

Theorems~\ref{thm:sum01pairs} and \ref{thm:tangentcharm} can also be used to compute the dimension of any cell. Denote by $Z^c_{\mathrm{quotient}}(q,t)$ the series enumerating the $(-)$-cells corresponding to descendants of a fixed core $c$.
\begin{example} For $e=(0,0)$ and $f=(0,0)$ a few examples of cores and their quotient series are as follows:
\[
\begin{multlined}
Z^{\text{\begin{tiny}\(\begin{bmatrix}0 & 0 \\ 0 & 0\end{bmatrix}\)\end{tiny}}}_{\mathrm{quotient}}(q,t)=
1 + q \left(t^{4} + 2 t^{2} + 1\right) + q^{2} \cdot \left(3 t^{8} + 4 t^{6} + 4 t^{4} + 2 t^{2} + 1\right) \\ + q^{3} \cdot \left(3 t^{12} + 5 t^{10} + 10 t^{8} + 5 t^{6} + 4 t^{4} + t^{2}\right) + O\left(q^{4}\right)
\end{multlined}
\]
\[
\begin{multlined}
Z^{\text{\begin{tiny}\(\begin{bmatrix}1 & -1 \\ -1 & 1\end{bmatrix}\)\end{tiny}}}_{\mathrm{quotient}}(q,t)=
 1 + q \left(2 t^{4} + t^{2} + 1\right) + q^{2} \left(t^{10} + 3 t^{8} + 4 t^{6} + 4 t^{4} + t^{2} + 1\right) \\ + q^{3} \left(t^{14} + 4 t^{12} + 6 t^{10} + 9 t^{8} + 4 t^{6} + 3 t^{4} + t^{2}\right) + O\left(q^{4}\right)
\end{multlined}
\]
\[
\begin{multlined}
Z^{\text{\begin{tiny}\(\begin{bmatrix}-1 & 1 \\ 1 & -1\end{bmatrix}\)\end{tiny}}}_{\mathrm{quotient}}(q,t)= 1 + q \left(t^{6} + t^{4} + t^{2} + 1\right) + q^{2} \cdot \left(3 t^{10} + 3 t^{8} + 3 t^{6} + 3 t^{4} + t^{2} + 1\right) \\ + q^{3} \cdot \left(2 t^{14} + 5 t^{12} + 9 t^{10} + 5 t^{8} + 4 t^{6} + 2 t^{4} + t^{2}\right) + O\left(q^{4}\right)
\end{multlined}
\]
All three series seems to be divisible by
\[ \phi(q,t)=\prod_{l \geq 1}^{\infty} \frac{1}{(1-t^{4l-4}q^l)(1-t^{4l-2}q^l)(1-t^{4l}q^l)} \]
in the sense that they are a product of $\phi(q,t)$ with power series having nonnegative integer coefficients.
Additionally, as expected, they all specialize to $\prod_{l \geq 1}^{\infty} (1-q^l)^{-4}$ at $t=1$.
The factors divided further by the contribution of the core cell are
\[
\frac{
Z^{\text{\begin{tiny}\(\begin{bmatrix}0 & 0 \\ 0 & 0\end{bmatrix}\)\end{tiny}}}_{\mathrm{quotient}}}{\phi(q,t)}= 1 + q t^{2} + q^{2} \left(t^{8} + t^{6}\right) + q^{3} \left(t^{12} + t^{10} + t^{8}\right) + q^{4} \left(t^{16} + 2 t^{14} + 2 t^{12}\right) + O\left(q^{5}\right)
\]
\[
\frac{Z^{\text{\begin{tiny}\(\begin{bmatrix}1 & -1 \\ -1 & 1\end{bmatrix}\)\end{tiny}}}_{\mathrm{quotient}}}{\phi(q,t) q^2 t^6}= 1 + q t^4 + q^{2} \left(t^{10} + t^{6}\right) + q^{3} \left(t^{14} + t^{12} + t^{8}\right) + q^{4} \left(3t^{16} + 2 t^{14}\right) + O\left(q^{5}\right)
\]
\[
\frac{Z^{\text{\begin{tiny}\(\begin{bmatrix}-1 & 1 \\ 1 & -1\end{bmatrix}\)\end{tiny}}}_{\mathrm{quotient}}}{ \phi(q,t)q^2t^4}=1+qt^6+2q^2t^{10}+q^3(t^{14}+2t^{12})+q^4(t^{20}+t^{18}+2t^{16}+t^{14})+ O\left(q^{5}\right)
\]

\end{example}

\begin{corollary} 
\label{cor:noprodnmarb}
The motivic/Poincaré generating series do not in general decompose into a product of core-term and quotient-term when $n>1$ and $m>1$.
\end{corollary}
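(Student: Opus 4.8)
The plan is to settle the statement by the explicit counterexample computed just above, so the task reduces to pinning down what a product decomposition ought to mean and then checking that it fails. By faithful analogy with the Euler characteristic identity of Theorem~\ref{thm:eulerchargen}, where $Z(q)=\bigl(\sum_{c}q^{d(c)}\bigr)\cdot\prod_{l}(1-q^l)^{-nm}$ with a \emph{universal} quotient factor $\prod_l(1-q^l)^{-nm}$ that is the same for every core, a product decomposition at the $(q,t)$-level means an identity $Z(q,t)=\bigl(\sum_{c}q^{d(c)}t^{\beta(c)}\bigr)\cdot\Phi(q,t)$ in which the core-term is a sum of monomials indexed by $C(e,f)$ and the quotient-term $\Phi(q,t)$ is one fixed power series, independent of the individual core. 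Since the bijection $M(e,f)\leftrightarrow C(e,f)\times P^{nm}$ partitions the $(-)$-cells according to their core, we have $Z^-(q,t)=\sum_{c\in C(e,f)}Z^{c}_{\mathrm{quotient}}(q,t)$, and such a decomposition is equivalent to the per-core statement that $Z^{c}_{\mathrm{quotient}}(q,t)=t^{\beta(c)}\,\Phi(q,t)$ holds for all $c$ with one common $\Phi$.

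I would then apply this to the three cores with $n=m=2$ and $e=f=(0,0)$ exhibited above, namely $\begin{bsmallmatrix}0&0\\0&0\end{bsmallmatrix}$, $\begin{bsmallmatrix}1&-1\\-1&1\end{bsmallmatrix}$ and $\begin{bsmallmatrix}-1&1\\1&-1\end{bsmallmatrix}$, which share the margins $e=f=(0,0)$. Each of the displayed quotient series $Z^{c}_{\mathrm{quotient}}(q,t)$ has constant term $1$ in $q$, whereas their coefficients of $q^{1}$ are the pairwise distinct polynomials $t^4+2t^2+1$, $2t^4+t^2+1$ and $t^6+t^4+t^2+1$. If all three equalled $t^{\beta(c)}\Phi(q,t)$ for a single $\Phi$, then comparing the $q^{0}$-coefficients would force $t^{\beta(c)}\,[\Phi]_{q^{0}}=1$ for each $c$, so the monomials $t^{\beta(c)}$ would all coincide and the three series $Z^{c}_{\mathrm{quotient}}$ would be literally equal; this contradicts the distinctness of their $q^{1}$-coefficients. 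Hence no universal $\Phi$ exists, the factorization fails for $Z^{-}(q,t)$, and the same three cores defeat it verbatim for $Z^{+}(q,t)$ and for the motivic series $\CZ(q)$ (via $\mathbb{L}=t^2$).

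The only genuinely substantive step is the production of the three quotient series themselves, which I would obtain from Theorem~\ref{thm:tangentcharm} (equivalently Theorem~\ref{thm:sum01pairs}) by summing the negative $\gamma$-weights over all descendants of each core. The structural reason the factorization breaks is already visible there: the core enters the $t$-exponents through the cross-terms with $\alpha\neq\beta$, whose contributions involve $R^{e_\alpha,e_\beta}_{\alpha,\beta}$ from \eqref{eq:Rdef} and the relative hook data of boxes lying in \emph{different} rows, and these depend on the actual matrix $c$ rather than only on its margins. At $t=1$ every such cross-contribution collapses and each $Z^{c}_{\mathrm{quotient}}(q,1)$ degenerates to the single universal series $\prod_l(1-q^l)^{-nm}$, which is precisely why Theorem~\ref{thm:eulerchargen} does factor; the obstruction is therefore intrinsic to the $t$-refinement and, as the example shows, already present in the smallest case $n=m=2$.
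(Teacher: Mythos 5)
Your proposal is correct and follows essentially the same route as the paper: the corollary is drawn directly from the preceding example, where the three cores $\begin{bsmallmatrix}0&0\\0&0\end{bsmallmatrix}$, $\begin{bsmallmatrix}1&-1\\-1&1\end{bsmallmatrix}$, $\begin{bsmallmatrix}-1&1\\1&-1\end{bsmallmatrix}$ for $e=f=(0,0)$ are shown (via Theorem~\ref{thm:tangentcharm}) to have pairwise distinct quotient series, which is incompatible with a single universal quotient factor. Your only addition is to make explicit the formalization of ``product decomposition'' and the elementary deduction from distinctness of the $q^1$-coefficients, which the paper leaves implicit.
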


\begin{conjecture}For $n,m$ and $e,f$ arbitrary, $Z_{\mathrm{quotient}}^c(q,t)$ is divisible in sense above by 
\[ \prod_{l \geq 1}^{\infty} \frac{1}{(1-t^{2(nl-n+1)}q^l)\cdots (1-t^{2(nl-1)}q^l)(1-t^{2nl}q^l)^{m-1}}, \]
\end{conjecture}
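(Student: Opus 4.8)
The plan is to compute $Z^c_{\mathrm{quotient}}(q,t)$ directly from Theorem~\ref{thm:tangentcharm} and then exhibit the claimed factorization. First I would fix the core $c$ and use the core--quotient bijection $M(e,f)\leftrightarrow C(e,f)\times P^{nm}$ of Section~\ref{sec:gen_series} to parametrize the descendants of $c$ by $nm$-tuples of partitions $(\lambda^{(i,j)})$, where for each D5 brane $i$ the $m$ partitions $(\lambda^{(i,1)},\dots,\lambda^{(i,m)})$ form the $m$-quotient of the classical part $Y_i$ of $B_i$. By Lemma~\ref{lem:dij} the $q$-degree of such a configuration is $q^{A+\sum_{i,j}|\lambda^{(i,j)}|}$ with $A=\sum_{i,j}\binom{c_{ij}}{2}$, so that the $q$-grading is organized column-by-column of the quotient partitions, a column of height $l$ contributing $q^l$. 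The $t$-degree is twice the dimension of the $(-)$-cell, which by the choice \eqref{eq:subgroup_choice} of $\gamma$ equals the number of monomials of negative weight in $T_x\mathcal{M}$.

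Second, I would read off $\dim U_x$ from Theorem~\ref{thm:tangentcharm} by counting, summand by summand $N_{\alpha,\beta}$, the monomials whose $(t_2,u_1,\dots,u_n,t_1)$-weight is lexicographically negative. The diagonal summands $\alpha=\beta$ carry the congruence $\delta^{(m)}_{h(s),0}$ on the ordinary hook length and, via the $m$-quotient, should factor cleanly: exactly as in Proposition~\ref{prop:n1marbgenfn}, each quotient component of each $Y_\beta$ contributes an independent partition generating series. Assembling these, together with the $m-1$ ``extra NS5'' colors already visible in the $n=1$ case, should produce precisely the universal factors
\[
D(q,t)=\prod_{l\geq 1}\frac{1}{(1-t^{2(nl-n+1)}q^l)\cdots(1-t^{2(nl-1)}q^l)(1-t^{2nl}q^l)^{m-1}}.
\]
The boundary cases $m=1$ (Corollary~\ref{cor:Zminus,m=1}) and $n=1$ (Proposition~\ref{prop:n1marbgenfn}) are exactly the specializations in which $D$ accounts for all but a monomial factor; these serve both as consistency checks and as the base of the induction.

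Third, and this is the crux, I would need to show that the remaining factor $G:=Z^c_{\mathrm{quotient}}/D$ is a power series with nonnegative integer coefficients. The clean way to encode this is to realize divisibility by $D$ as a \emph{free} action of the commutative monoid generated by symbols $g_{l,i}$, one for each factor of $D$, on the set of descendants of $c$: here $g_{l,i}$ inserts a height-$l$ vertical strip into the appropriate $m$-quotient component of a single $Y_\beta$, and is required to raise $d$ by $l$ and $\dim U_x$ by $nl-i$. Freeness of the action, together with the bidegree bookkeeping, would give $Z^c_{\mathrm{quotient}}=D\cdot G$ with $G$ the manifestly nonnegative generating series of orbit representatives.

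The main obstacle is that $\dim U_x$ is \emph{not} additive under these strip insertions: a single insertion alters hook lengths throughout $Y_\beta$, changing which boxes satisfy the congruences $\delta^{(m)}$, and it perturbs every off-diagonal cross-term $N_{\alpha,\beta}$ with $\alpha\neq\beta$. Indeed Corollary~\ref{cor:noprodnmarb} shows that these interactions genuinely obstruct a full product formula, so the content of the conjecture is precisely that, after dividing by $D$, the residual off-diagonal contributions still organize into nonnegative coefficients. Proving that the net bidegree shift of each $g_{l,i}$ is exactly $(l,\,nl-i)$ independently of the ambient configuration --- equivalently, that the off-diagonal perturbations cancel in the quotient --- is where I expect the real difficulty to lie. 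An alternative route, extracting $Z^c_{\mathrm{quotient}}$ from a refined Zastava series that tracks the $t_1$-exponent minus the $t_2$-exponent modulo $m$ (via the folding used in the proof of Theorem~\ref{thm:tangentcharm}) by a roots-of-unity average, runs into the same wall: the average of the Zastava product is not itself a product, and isolating the factor $D$ inside it is exactly the non-factorization problem.
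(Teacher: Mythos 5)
The statement you are addressing is stated in the paper as a \emph{conjecture}: the authors offer no proof, only the computational evidence of the preceding example (the three cores for $e=f=(0,0)$, whose quotient series are checked to be divisible by $\phi(q,t)$ to the computed order). So there is no proof in the paper to compare against, and the real question is whether your proposal closes the gap the authors left open. It does not, and you essentially say so yourself.

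Your setup is sound and matches the paper's framework: the core--quotient bijection parametrizes the descendants of $c$ by $nm$-tuples of partitions, Lemma~\ref{lem:dij} gives the $q$-shift $A=\sum_{i,j}\binom{c_{ij}}{2}$, and the cell dimensions are read off from Theorem~\ref{thm:tangentcharm} with the one-parameter subgroup \eqref{eq:subgroup_choice}; the boundary cases you cite (Corollary~\ref{cor:Zminus,m=1} and Proposition~\ref{prop:n1marbgenfn}) are genuine consistency checks. But the crux --- that the residual series $G=Z^c_{\mathrm{quotient}}/D$ has nonnegative integer coefficients --- is exactly the content of the conjecture, and your proposed mechanism (a free monoid action by strip insertions $g_{l,i}$ with fixed bidegree shift $(l,\,nl-i)$) is asserted rather than established. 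As you note, a single insertion changes hook lengths throughout $Y_\beta$ and hence which boxes pass the congruence tests $\delta^{(m)}$ in the diagonal terms, and it perturbs every off-diagonal $N_{\alpha,\beta}$; Corollary~\ref{cor:noprodnmarb} shows these perturbations cannot all cancel, since otherwise the full series would factor into a core term and a quotient term. Without a proof that the \emph{net} effect of each $g_{l,i}$ on $\dim U_x$ is independent of the ambient configuration --- or some substitute, such as an explicit degree-preserving bijection or a module-theoretic interpretation of $G$ --- the argument does not go through. Your alternative route via roots-of-unity averaging of the $m=1$ product hits the same wall for the same reason. In short: you have correctly localized the difficulty, but the statement remains, for you as for the authors, a conjecture.
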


It seems to be an interesting combinatorial problem to express the remaining factors from the core and its contingency table.

\subsection{Cell decomposition} \label{sec:CellDecomposition}

We now give a sufficient condition for the Bia\l ynicki-Birula cells to cover the bow variety $\mathcal{M}(d,e,f)$. We continue using the one-parameter subgroup $\gamma$ specified in \eqref{eq:subgroup_choice}.
\begin{theorem}
\label{thm:cbbcelldec_cover}
Assume that $e \in \ZZ^n$ is an {\em $m+1$-bounded non-decreasing sequence}, that is 
\begin{equation}
\label{eq:weakquivcond}
e_1 \leq e_2 \leq \dots \leq e_n \leq e_1 + m +1.
\end{equation}
Then \eqref{eq:cellscover} holds for $\mathcal{M}(d,e,f)$. 
In particular, Proposition~\ref{prop:bbbowvar} and Corollary~\ref{cor:bbbowvarfam} hold for any $d$ and $f$.
\end{theorem}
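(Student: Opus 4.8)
The plan is to apply Proposition~\ref{prop:MaxCellCircleCompact}. Since that proposition already derives \eqref{eq:cellscover} (and hence Proposition~\ref{prop:bbbowvar} and Corollary~\ref{cor:bbbowvarfam}) from the existence of a single zero-dimensional $(-)$-cell, it suffices to show that for every $d$ with $\mathcal{M}(d,e,f)\neq\emptyset$ there is a torus fixed point $x$ at which $U_x$ is a point, equivalently $T_x\mathcal{M}$ carries only strictly positive $\gamma$-weights for the one-parameter subgroup \eqref{eq:subgroup_choice}. First I would record the sign rule exactly as in the proof of Theorem~\ref{thm:poincare_zastava}: a monomial $t_1^{s_1}t_2^{s_2}\u_\beta\u_\alpha^{-1}$ is $\gamma$-negative precisely when $s_2<0$, or $s_2=0$ and $\beta>\alpha$, or $s_2=0,\ \alpha=\beta,\ s_1<0$. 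Feeding this into Theorem~\ref{thm:tangentcharm} splits the number of negative weights of $T_x\mathcal{M}$ into a \emph{core part}, coming from the summands $\sum_{(s_1,s_2)\in R^{e_\alpha,e_\beta}_{\alpha,\beta}}t_1^{s_1}t_2^{s_2}\delta^{(m)}_{s_1,s_2}$ and depending only on $e$, and a \emph{Young part}, coming from the arm/leg sums and depending on the tuple $(Y_1,\dots,Y_n)$.

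The key lemma is that, under hypothesis \eqref{eq:weakquivcond}, the core part contributes no negative weight. Because $e$ is non-decreasing, the only pairs whose triangle $R^{e_\alpha,e_\beta}_{\alpha,\beta}$ can produce a nonpositive $t_2$-exponent are those with $\alpha>\beta$ and $e_\alpha>e_\beta$, where $R^{e_\alpha,e_\beta}_{\alpha,\beta}=t_1^{-1}T_{e_\beta-e_\alpha+1}$ by \eqref{eq:Rdef}. Writing $j=e_\beta-e_\alpha+1\le 0$, the lattice points of $T_j$ (from \eqref{eq:Tdef}) are the $(a,b)$ with $j+1\le a\le b\le 0$, and a negative $\gamma$-weight requires $b<0$ together with the mod-$m$ filter $(a-1)\equiv b\pmod m$, i.e.\ $b-a\equiv m-1\pmod m$. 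The smallest admissible value $b-a=m-1$ already forces $-j\ge m+1$, that is $e_\alpha-e_\beta\ge m+2$; I would check that this count equals $\sum_{c\equiv m-1\,(m)}\max(-c-j-1,0)$ and therefore vanishes precisely when $e_\alpha-e_\beta\le m+1$. Since \eqref{eq:weakquivcond} gives $e_\alpha-e_\beta\le e_n-e_1\le m+1$ for all $\alpha>\beta$, the core part is zero; for $m=1$ this recovers $B=0$ and the $2$-bounded criterion of the remark after Corollary~\ref{cor:Zminus,m=1}.

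It then remains to find, for each $d$, a fixed point whose Young part is also free of negative weights. I would fix a core $c_0$ minimizing the associated $d$-value (cf.\ the bijection $M(e,f)\leftrightarrow C(e,f)\times P^{nm}$ and Lemma~\ref{lem:dij}), keep this core, and let the quotient Young data vary. Comparing the Young part of Theorem~\ref{thm:tangentcharm} with Nakajima's tangent formula \cite[Theorem~3.4]{nakajima2005instanton}, one sees that the conditions $\delta^{(m)}_{\cdot,\,e_\alpha-e_\beta}$ are exactly those of a quiver variety of affine type $\hat A_{m-1}$ whose $i$-th framing sits at the vertex $e_i \bmod m$; the shift prefactors $t_1^{e_\beta-e_\alpha}$ do not affect the $t_2$-dominated sign rule. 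Hence the Young part of the $(-)$-cell dimension equals that of the corresponding quiver variety, which is circle compact for every quotient dimension by Nakajima. Transporting a zero-dimensional quiver $(-)$-cell back produces a bow fixed point with core $c_0$, total dimension any prescribed $d\ge d(c_0)$, and (by the key lemma) a zero-dimensional $(-)$-cell. For $d<d(c_0)$ there are no $T$-fixed points, so $\mathcal{M}(d,e,f)$ is empty (using properness of $\pi$) and \eqref{eq:cellscover} holds vacuously.

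The main obstacle is the key lemma's threshold computation, where the exact constant $m+1$ (rather than $m$) emerges from the interaction of the triangles $T_{e_\beta-e_\alpha+1}$ with the $\delta^{(m)}$ filter. A secondary subtlety is making the identification of the Young part with a genuine $\hat A_{m-1}$ quiver variety precise enough to import circle-compactness, including verifying that the borderline $s_2=0$ weights are resolved with the correct sign by the $\u$- and $t_1$-tie-breaks dictated by \eqref{eq:subgroup_choice}.
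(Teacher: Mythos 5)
Your proposal is correct and follows essentially the same route as the paper's proof: both reduce to Proposition~\ref{prop:MaxCellCircleCompact}, split $N_{\alpha,\beta}$ from Theorem~\ref{thm:tangentcharm} into the Young ($Q$) part and the triangle ($R$) part, kill the $R$-part's negative weights via the interaction of \eqref{eq:Rdef}--\eqref{eq:Tdef} with the $\delta^{(m)}$ filter (your explicit threshold computation showing negativity requires $e_\alpha-e_\beta\ge m+2$ is exactly the point the paper leaves implicit), and import the existence of a maximal cell from affine type $\hat A$ quiver varieties for the $Q$-part. The only blemish is the detour through a minimal core $c_0$: the good fixed point supplied by the quiver variety need not have that core, but since any fixed point with vanishing $Q$-contribution suffices, this does not affect the argument.
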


For quiver varieties $e$ is an $m$-bounded non-decreasing sequence, which implies $m+1$-boundedness. Hence Theorem \ref{thm:cbbcelldec_cover} extends well-known properties of quiver varieties to a larger class of bow varieties.

\begin{remark} 
For our specific choice of the one-parameter subgroup $\gamma$, computer evidence suggests that \eqref{eq:weakquivcond} is not only sufficient but also necessary for \eqref{eq:cellscover} to hold for all $f\in \ZZ^m$ and~$d$. 
\end{remark}

\begin{proof}[{Proof of Theorem~\ref{thm:cbbcelldec_cover}}] 
We will use Theorem~\ref{thm:tangentcharm}; denote the two terms of $N_{\alpha,\beta}$ in it by $Q$ and $R$ respectively. By the same considerations as in the proof of Theorem~\ref{thm:poincare_zastava}, the dimension of a $(+)$-cell depends on the number summands in $N_{\alpha,\beta}$ such that the weight of $t_2$ is positive, resp. nonnegative when $\alpha < \beta$, resp. $\alpha > \beta$, while it is independent from the components of $e$ when $\alpha=\beta$.

Recall that if $e_1=\dots=e_n=0$, then the bow variety directly corresponds to a quiver variety. It is known that quiver varieties of affine type A always have a maximal dimensional cell \cite{nakajima2004quiver, fujii2017combinatorial}. Note as well that the shift $t_1^{e_{\beta}-e_{\alpha}}$ in the $Q$-term of $N_{\alpha,\beta}$ does not change the $t_2$-weights of the summands. Combining these facts, we get that for any bow variety there exists a fixed point for which all the $Q$-terms lie in the positive weight space for our choice of the one-parameter subgroup $\gamma$. 


Our assumption assures that 
\[ 
\begin{aligned}
0 & \leq  e_{\beta}- e_{\alpha} & \quad \textrm{if } \alpha \leq \beta \\
-m-1 & \leq  e_{\beta}- e_{\alpha} & \quad \textrm{if } \alpha > \beta \\
\end{aligned}
\]
for all $1 \leq \alpha,\beta \leq n$. By definitions \eqref{eq:Rdef} and \eqref{eq:Tdef}, these imply that all terms in $R_{\alpha,\beta}^{e_{\alpha},e_{\beta}}$ have positive, resp. nonnegative $t_2$-weight when $\alpha < \beta$, resp. $\alpha > \beta$.

Combining all these, we get that for the specific fixed point obtained above both the $Q$ and $R$ terms lie in the positive weight space. So the corresponding cell is maximal dimensional. In $\mathcal{M}_0$ the $(+)$-set whence covers the whole variety, while the $(-)$-set is a point.

\end{proof}

\subsubsection{Quiver-like bow varieties}
Observe that the favorable properties collected in Proposition~\ref{prop:bbbowvar} do not depend on the choice of the one-parameter subgroup $\gamma$. As long as a $\gamma$ exists with property \eqref{eq:cellscover} we have the 
consequences of Proposition \ref{prop:bbbowvar}. A simple way of considering another $\gamma$ for $\mathcal{M}(d,e,f)$ is carrying out a [move-1] on the triple $(d,e,f)$. Then the $\gamma$ defined by \eqref{eq:subgroup_choice} changes, because the torus gets reparametrized. Hence we obtain the following.

\begin{corollary} \label{cor:othersubgroup}
    If repeated applications of [move-1]'s (or its inverse) can change $(d,e,f)$ to a triple $(d',e',f')$ such that $e'$ is a $m+1$-bounded non-decreasing sequence, then $\mathcal{M}(d,e,f)$ satisfies the properties listed in Proposition~\ref{prop:bbbowvar}.
\end{corollary}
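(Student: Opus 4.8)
The plan is to leverage two facts already in place: first, the observation recorded just before the statement that the properties of Proposition~\ref{prop:bbbowvar} follow as soon as \emph{some} one-parameter subgroup $\gamma$ satisfies the covering condition \eqref{eq:cellscover}; and second, that [move-1] realises an isomorphism of bow varieties through a Hanany--Witten transition. So it suffices to manufacture one good $\gamma$ on $\mathcal{M}(d,e,f)$ by importing it from the $m+1$-bounded model provided by the hypothesis. (Properties (1)--(3) of Proposition~\ref{prop:bbbowvar} are themselves isomorphism invariants of the variety and would transfer on those grounds alone; the unified route through \eqref{eq:cellscover} has the advantage of also delivering the homotopy retract statement (4), which refers to $\pi^{-1}([0])$.)

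Concretely, I would proceed as follows. By the Lemma of Section~\ref{subsec:brane_diags_standard_form}, a sequence of [move-1]'s and their inverses carrying $(d,e,f)$ to $(d',e',f')$ corresponds to a sequence of Hanany--Witten transitions between the two standard diagrams, so the isomorphism theorem for HW-equivalent diagrams yields an isomorphism $\Phi\colon \mathcal{M}(d,e,f)\xrightarrow{\sim}\mathcal{M}(d',e',f')$. This $\Phi$ intertwines the torus actions after the reparametrization of $T$ induced by the move (precisely the reparametrization noted in the paragraph before the statement), and it is compatible with the projections to the affine quotients, carrying $\pi$ for $\mathcal{M}(d,e,f)$ to the analogous map $\pi'$ for $\mathcal{M}(d',e',f')$ and the distinguished point $[0]$ to $[0]$. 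Since $e'$ is $m+1$-bounded non-decreasing, Theorem~\ref{thm:cbbcelldec_cover} guarantees that the standard one-parameter subgroup $\gamma'$ of \eqref{eq:subgroup_choice} satisfies \eqref{eq:cellscover} on $\mathcal{M}(d',e',f')$. Pulling $\gamma'$ back along the torus reparametrization produces a one-parameter subgroup $\gamma$ acting on $\mathcal{M}(d,e,f)$, and because $\Phi$ is an equivariant isomorphism compatible with $\pi$, the equalities $\bigcup_x S_x=\mathcal{M}(d,e,f)$ and $\bigcup_x U_x=\pi^{-1}([0])$ follow from the corresponding equalities on the primed side. Hence \eqref{eq:cellscover} holds for $\gamma$, and Proposition~\ref{prop:bbbowvar} together with Corollary~\ref{cor:bbbowvarfam} apply to $\mathcal{M}(d,e,f)$ for all $d$ and $f$.

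The only genuinely non-formal point, and the step I would treat most carefully, is the compatibility of $\Phi$ with the torus action and with the semismall maps $\pi,\pi'$: condition \eqref{eq:cellscover} is phrased in terms of the attracting sets and of $\pi^{-1}([0])$, so the transport of \eqref{eq:cellscover} across $\Phi$ is legitimate only once we know $\Phi$ respects these structures. This should follow directly from the Nakajima--Takayama construction of the HW isomorphism, which is assembled from the same linear-algebra data and is manifestly $T$-equivariant and compatible with the GIT quotients, but it is the hinge of the argument and deserves an explicit remark rather than being passed over in silence.
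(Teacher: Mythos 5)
Your argument is correct and is essentially the paper's own: the text preceding the corollary observes that Proposition~\ref{prop:bbbowvar} only requires \emph{some} $\gamma$ satisfying \eqref{eq:cellscover}, and that a [move-1] reparametrizes the torus so that the standard $\gamma$ of \eqref{eq:subgroup_choice} for the $m+1$-bounded presentation, guaranteed to work by Theorem~\ref{thm:cbbcelldec_cover}, transports to a valid choice on $\mathcal{M}(d,e,f)$. Your explicit attention to the equivariance of the Hanany--Witten isomorphism and its compatibility with $\pi$ and $[0]$ is a point the paper leaves implicit, but it does not change the route.
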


It is worth mentioning how to recognize vectors $e\in \Z^n$ that are [move-1]-equivalent to a $m+1$-bounded non-decreasing sequence. Define
\[
\delta_1=e_2-e_1, \ \delta_2=e_3-e_2, \ \ldots, \ \delta_{n-1}=e_n-e_{n-1}, \  \delta_n=(e_1+m)-e_n.
\]
\begin{proposition}
A vector $e\in \Z^n$ is [move-1]-equivalent to an $m+1$-bounded non-decreasing sequence if and only if 
\begin{equation}\label{eq:quiverlike}
\exists j \in \{1,\ldots,n\}: \qquad  \delta_j\geq -1 \text{ and } (\delta_i\geq 0 \text{ for } i\not= j).
 \end{equation}
\end{proposition}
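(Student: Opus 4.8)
The plan is to translate the whole statement into the gap coordinates $\delta=(\delta_1,\dots,\delta_n)$ and to track how [move-1] acts on them. First I would record two elementary facts. Since each $\delta_i$ is a difference of consecutive entries of $e$, read cyclically with the wrap-around term $\delta_n=(e_1+m)-e_n$, adding a common constant to every component of $e$ leaves $\delta$ unchanged. And one checks directly from [move-1], which sends $(e_1,\dots,e_n)$ to $(e_2,\dots,e_n,e_1+m)$, that the associated gap vector transforms by a single left cyclic shift,
\[
(\delta_1,\delta_2,\dots,\delta_n)\longmapsto(\delta_2,\delta_3,\dots,\delta_n,\delta_1),
\]
the wrap-around being exactly what makes this clean: the new last gap equals $(e_2+m)-(e_1+m)=\delta_1$, and $m$ itself is unchanged by [move-1]. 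Consequently, repeated applications of [move-1] realise every cyclic rotation of $\delta$, and these rotations are precisely the gap vectors occurring in the [move-1]-orbit of $e$; the inverse moves contribute nothing new.

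Next I would reformulate the target property in the same coordinates. The sequence $e$ is $m+1$-bounded non-decreasing, i.e. $e_1\le e_2\le\cdots\le e_n\le e_1+m+1$, exactly when $\delta_1,\dots,\delta_{n-1}\ge 0$ and $\delta_n\ge -1$. In the gap language this reads: every entry of $\delta$ is nonnegative except possibly the last, which is allowed to be $\ge -1$. The compatibility $\sum_i\delta_i=m$ holds automatically and plays no role.

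With these two reformulations the equivalence is essentially immediate. For the forward direction, if $e$ is [move-1]-equivalent to an $m+1$-bounded non-decreasing $e'$, then $\delta(e')$ is a cyclic rotation of $\delta(e)$ whose entries are all $\ge 0$ except the last, which is $\ge -1$; undoing the rotation, the index $j$ of $\delta(e)$ that landed in the last slot satisfies $\delta_j\ge -1$, while all other entries, having landed in positions $1,\dots,n-1$, are $\ge 0$, which is \eqref{eq:quiverlike}. Conversely, given $j$ as in \eqref{eq:quiverlike}, I would apply [move-1] the number of times needed to rotate $\delta$ so that its $j$-th entry occupies the last coordinate; the resulting gap vector is then nonnegative in positions $1,\dots,n-1$ and $\ge -1$ in position $n$, so the corresponding triple has $m+1$-bounded non-decreasing margin vector.

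The argument is bookkeeping once the cyclic-shift observation is in hand, so there is no serious obstacle. The only points requiring care are the verification that [move-1] acts as a clean left rotation on $\delta$ — in particular that the $+m$ in the wrap-around of [move-1] conspires exactly with the $+m$ in the definition of $\delta_n$ — and the routine check that iterating [move-1] reaches all $n$ rotations, so that any prescribed index $j$ can indeed be sent to the last slot.
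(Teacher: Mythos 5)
Your proof is correct and is exactly the argument the paper intends: the paper's own proof is the one-line ``Follows from the definition of [move-1]{}'', and your observation that [move-1] acts on the gap vector $\delta$ as a left cyclic shift (with the $+m$ in the wrap-around cancelling the $+m$ in [move-1]) is precisely the verification being left to the reader. Nothing is missing.
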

\begin{proof} Follows from the definition of [move-1].
\end{proof}
This condition is a relaxation of the ``all $\delta_i\geq 0$'' condition that would make the $\mathcal{M}(d,e,f)$ variety a quiver variety, cf. Theorem \ref{wittentheorem}. Therefore the bow varieties $\mathcal{M}(d,e,f)$ for which $e$ satisfies \eqref{eq:quiverlike} can be considered \emph{quiver-like}: they share the crucial properties collected in Proposition~\ref{prop:bbbowvar} with quiver varieties.

\section{Partition function}
\label{sec:partition_function}

Recall that the equivariant homology group is a module over the usual equivariant cohomology of a
point $H^{\ast}_T(\pt)$. The latter is the symmetric algebra $S(\mathfrak{t}^{\ast})$ of the dual of the Lie algebra of $T$. Denote by $\varepsilon_1, \varepsilon_2, a_1,\dots, a_r$ its generators corresponding respectively to $t_1, t_2, \u_1,\dots, \u_r$, the generators of the equivariant K-theory. For short, we write  $a=(a_1,\dots, a_r)$. 

By the localisation theorem for equivariant homology there exists a commutative diagram
\begin{center}
\begin{tikzcd}
H_{\ast}^T(\mathcal{M}) \otimes_{S(\mathfrak{t}^{\ast})} \mathcal{S} \ar[r, "\cong", "(\iota_{\ast})^{-1}"'] \ar[d,"\pi_{\ast}"] & \oplus_M  \mathcal{S} \ar[d,"\sum_{M}"] \\
H_{\ast}^T(\mathcal{M}_0) \otimes_{S(\mathfrak{t}^{\ast})} \mathcal{S} \ar[r, "\cong", "(\iota_{0\ast})^{-1}"'] &   \mathcal{S}
\end{tikzcd}
\end{center}

Fix margin vectors $(e,f)$ such that $\mathcal{M}(d,e,f)$ satisfies \eqref{eq:cellscover} for all $d \geq 0$. We define the \emph{parabolic partition function} as the following generating function:
\[Z(\varepsilon_1,\varepsilon_2,a,q) = \sum_{d=0}^{\infty} Z_n(\varepsilon_1,\varepsilon_2,a)q^d =  \sum_{d=0}^{\infty} (\iota_{0\ast})^{-1} \pi_{\ast}[\mathcal{M}(d,e,f)]q^d.\]
\begin{proposition}
\label{prop:nekrasov_coeff}
\[ Z(\varepsilon_1,\varepsilon_2,a) = \sum_{M \in M(d,e,f)} \frac{1}{e(T_M)}\]
\end{proposition}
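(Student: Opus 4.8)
The plan is to establish the identity coefficient by coefficient, i.e.\ to show that $(\iota_{0\ast})^{-1}\pi_{\ast}[\mathcal{M}(d,e,f)]=\sum_{M\in M(d,e,f)}1/e(T_M)$ for each fixed $d$, and then to read off the statement by comparing $q^d$-coefficients in the defining series. This is a standard application of the Atiyah--Bott / Berline--Vergne localization theorem: the only substantive input is the decomposition of the equivariant fundamental class $[\mathcal{M}]$ in the localized theory, after which everything follows by chasing the commutative diagram above.

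First I would compute $(\iota_{\ast})^{-1}[\mathcal{M}]$, where $\mathcal{M}=\mathcal{M}(d,e,f)$. Since the $T$-action has isolated fixed points (Section~\ref{subsec:torus_fixed_points}), after tensoring with $\mathcal{S}$ the inclusion induces the isomorphism $\iota_{\ast}\colon\bigoplus_M \mathcal{S}\xrightarrow{\cong}H_{\ast}^T(\mathcal{M})\otimes_{S(\mathfrak{t}^{\ast})}\mathcal{S}$ of the diagram, with the $M$-summand generated by the class $\iota_{M\ast}[M]$ of the fixed point. Writing $[\mathcal{M}]=\sum_M c_M\,\iota_{M\ast}[M]$ and applying the refined Gysin pullback $\iota_M^{!}$ for the regular embedding of the smooth point $M$ into the smooth variety $\mathcal{M}$, the self-intersection formula $\iota_M^{!}\iota_{M\ast}[M]=e(T_M)[M]$ together with $\iota_M^{!}[\mathcal{M}]=[M]$ forces $c_M=1/e(T_M)$. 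Hence the $M$-component of $(\iota_{\ast})^{-1}[\mathcal{M}]$ is $1/e(T_M)$, where $e(T_M)$ denotes the equivariant Euler class of $T_M\mathcal{M}$, that is, the product of the linear forms attached to the weights of the tangent character: a monomial $t_1^{p}t_2^{q}\u_{\beta}\u_{\alpha}^{-1}$ appearing in Theorem~\ref{thm:tangentcharm} (equivalently Theorem~\ref{thm:sum01pairs}) contributes the factor $p\varepsilon_1+q\varepsilon_2+a_{\beta}-a_{\alpha}$.

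Next I would chase the diagram: applying the right vertical map $\sum_M$ to the tuple $(1/e(T_M))_M$ produces $\sum_M 1/e(T_M)$, so commutativity gives $(\iota_{0\ast})^{-1}\pi_{\ast}[\mathcal{M}]=\sum_{M\in M(d,e,f)}1/e(T_M)$. The pushforward $\pi_{\ast}$ is well defined because $\pi\colon\mathcal{M}\to\mathcal{M}_0$ is proper, being semismall onto its image \cite[Proposition~4.5]{nakajima2017cherkis}, and the bottom isomorphism $(\iota_{0\ast})^{-1}$ is localization at the cone point $0\in\mathcal{M}_0$; this is the relevant fixed point precisely because $(e,f)$ was chosen so that \eqref{eq:cellscover} holds, whence $[0]=0$ by Proposition~\ref{prop:MaxCellCircleCompact}. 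Since $Z_n(\varepsilon_1,\varepsilon_2,a)=(\iota_{0\ast})^{-1}\pi_{\ast}[\mathcal{M}(d,e,f)]$ by definition, this is exactly the asserted formula.

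I do not expect a serious obstacle here; the difficulty is bookkeeping rather than anything conceptual. The most delicate point is fixing the normalizations so that the self-intersection formula takes the clean form $\iota_M^{!}\iota_{M\ast}=e(T_M)$ with the correct signs, and so that the tangent weights entering $e(T_M)$ match the equivariant K-theory characters computed in Section~\ref{sec:equivK}. A secondary point is checking that, under the localization isomorphism for $\mathcal{M}_0$, only the single summand corresponding to $0$ contributes to $(\iota_{0\ast})^{-1}$; this is exactly what the circle-compactness hypothesis \eqref{eq:cellscover}, built into the choice of $(e,f)$, guarantees.
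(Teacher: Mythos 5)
Your argument is correct and is essentially the paper's own: the paper simply cites the localization identity $(\iota_{0\ast})^{-1}\pi_{\ast}(\alpha)=\sum_M \iota_M^{\ast}\alpha/e(T_M)$ from Nakajima--Yoshioka and specializes to $\alpha=[\mathcal{M}(d,e,f)]$, whereas you re-derive that identity from the self-intersection formula before chasing the same diagram. No substantive difference.
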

\begin{proof} 
Replacing the quiver variety appearing in \cite[Section~4]{nakajima2005instanton} by the bow variety $\mathcal{M}(d,e,f)$  we have that for any class $\alpha$
\[ (\iota_{0\ast})^{-1} \pi_{\ast}(\alpha) = \sum_M \frac{\iota_M^{\ast}\alpha}{e(T_M)}\]
holds. In particular,
\[ 
(\iota_{0\ast})^{-1} \pi_{\ast}[\mathcal{M}(d,e,f)] = \sum_{M} \frac{1}{e(T_M)}.\]
\end{proof}
\begin{proposition}
\[Z(\varepsilon_1,\varepsilon_2,a,q)=\sum_{M \in M(e,f)} \frac{{q}^{|M|}}{\prod_{\alpha,\beta=1}^n n_{\alpha,\beta}^M(\varepsilon_1,\varepsilon_2,a)}\]
where 
\[ 
\begin{multlined}
n_{\alpha,\beta}^M(\varepsilon_1,\varepsilon_2,a)=\prod_{s \in Y_{\alpha}} \left((e_{\beta}-e_{\alpha}-l_{Y_{\beta}}(s))\varepsilon_1 + (a_{Y_{\alpha}}(s)+1)\varepsilon_2+a_{\beta}-a_{\alpha}\right)^{\delta^{(m)}_{\u_{Y_{\beta},Y_{\alpha}}(s),e_{\alpha}-e_{\beta}}} \\ \times  \prod_{t \in Y_{\beta}}\left((e_{\beta}-e_{\alpha}+l_{Y_{\alpha}}(t)+1)\varepsilon_1-a_{Y_{\beta}}(t)\varepsilon_2+a_{\beta}-a_{\alpha}\right)^{\delta^{(m)}_{\u_{Y_{\alpha},Y_{\beta}}(s),e_{\beta}-e_{\alpha}}} \\ \times \prod_{(s_1,s_2) \in R^{e_{\alpha},e_{\beta}}_{\alpha,\beta}} (s_1\varepsilon_1+s_2\varepsilon_2+a_{\beta}-a_{\alpha})^{\delta_{s_1,s_2}^{(m)}}
\end{multlined}\]
\end{proposition}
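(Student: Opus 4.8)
The plan is to combine Proposition~\ref{prop:nekrasov_coeff} with the explicit tangent-space character of Theorem~\ref{thm:tangentcharm}, passing from equivariant K-theory to equivariant cohomology by the standard Euler-class recipe. First I would assemble the $d$-graded pieces of Proposition~\ref{prop:nekrasov_coeff}: since $|M|=d$ for $M \in M(d,e,f)$, summing over all $d \geq 0$ yields
\[
Z(\varepsilon_1,\varepsilon_2,a,q)=\sum_{M \in M(e,f)} \frac{q^{|M|}}{e(T_M)},
\]
so that it remains only to compute the equivariant Euler class $e(T_M)$ of the tangent space at each fixed point.

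Next I would invoke the elementary but crucial dictionary between K-theory and cohomology. Because the fixed points are isolated on the smooth variety $\mathcal{M}$, the tangent space $T_M$ is an honest (non-virtual) $T$-module, so the character furnished by Theorem~\ref{thm:tangentcharm} is \emph{effective}: a genuine sum of monomials $t_1^{s_1}t_2^{s_2}\u_\beta\u_\alpha^{-1}$, each counting a one-dimensional weight subspace. The equivariant Euler class is then the product of the equivariant first Chern classes of these weight lines, and under the identification of the generators $t_1,t_2,\u_i$ of $K_T(\pt)$ with the generators $\varepsilon_1,\varepsilon_2,a_i$ of $H^*_T(\pt)$ this sends a monomial weight $t_1^{s_1}t_2^{s_2}\u_\beta\u_\alpha^{-1}$ to the linear form $s_1\varepsilon_1+s_2\varepsilon_2+a_\beta-a_\alpha$. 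Thus $e(T_M)$ is the product, over all monomials occurring in the character, of the corresponding linear forms.

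It then remains to read off this product from Theorem~\ref{thm:tangentcharm}. The direct-sum decomposition over pairs $(\alpha,\beta)$ inherited from Corollary~\ref{cor:kertaumodimsigma} turns $e(T_M)$ into the product $\prod_{\alpha,\beta} n_{\alpha,\beta}^M$, and within each factor the three groups of monomials in $N_{\alpha,\beta}$---the sum over $s \in Y_\alpha$, the sum over $t \in Y_\beta$, and the $R^{e_\alpha,e_\beta}_{\alpha,\beta}$ term---produce the three products displayed in the formula for $n_{\alpha,\beta}^M$. The symbols $\delta^{(m)}$, which in Theorem~\ref{thm:tangentcharm} select which monomials actually appear, become exponents $0$ or $1$ on the individual linear factors, exactly as written. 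Particular care is needed with the $t_1$-exponents: the prefactor $t_1^{e_\beta-e_\alpha}$ in Theorem~\ref{thm:tangentcharm} multiplies only the two Young-diagram sums, producing the shifts $e_\beta-e_\alpha-l_{Y_\beta}(s)$ and $e_\beta-e_\alpha+l_{Y_\alpha}(t)+1$ in the first two products, whereas the $R$-term monomials $t_1^{s_1}t_2^{s_2}$ already carry their full exponents and yield the factor $s_1\varepsilon_1+s_2\varepsilon_2+a_\beta-a_\alpha$ directly.

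The computation is otherwise routine; the only genuine subtlety---and the step I would be most careful about---is confirming that the character in Theorem~\ref{thm:tangentcharm} is genuinely effective (so that $e(T_M)$ is an honest product of linear forms, with no cancellation forcing it to be a ratio), together with the bookkeeping of the $t_1$-exponent shift through the $t_1^{e_\beta-e_\alpha}$ prefactor.
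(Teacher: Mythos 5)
Your proposal is correct and follows exactly the paper's route: the paper's proof is the one-line observation that the statement follows by combining Proposition~\ref{prop:nekrasov_coeff} with Theorem~\ref{thm:tangentcharm}, which is precisely what you carry out (with the standard passage from the effective K-theory character to the product of linear forms giving the equivariant Euler class). Your extra care about the $t_1^{e_\beta-e_\alpha}$ prefactor and the effectiveness of the character is sound bookkeeping but does not change the argument.
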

\begin{proof} Follows by combining Theorem~\ref{thm:tangentcharm} and Proposition~\ref{prop:nekrasov_coeff}.
\end{proof}
Assume for simplicity that $m=1$. Denote by $Z^{M(r,n)}(\varepsilon_1,\varepsilon_2,a)$ Nekrasov's partition function \cite[Section~6]{nakajima2005instanton} for the Nakajima quiver variety $M(r,n)$. Note that the third term in the above expression for $n_{\alpha,\beta}^M(\varepsilon_1,\varepsilon_2,a)$ does not depend on $M$, but only on $e$. This observation leads us to the following.
\begin{corollary}
\label{cor:partfunasquivfn}
Let $m=1$. Assume that $e \in \ZZ^n$ is a $2$-bounded non-decreasing sequence. Then for any $d \in \mathbb{Z}_{\geq 0}$
\[Z(\varepsilon_1,\varepsilon_2,a )= Z^{M(r,n)}(\varepsilon_1,\varepsilon_2,a + \varepsilon_1 e) \cdot \prod_{\substack{1 \leq \alpha, \beta \leq n \\ (s_1,s_2) \in R^{e_{\alpha},e_{\beta}}_{\alpha,\beta}}} \frac{1}{ s_1\varepsilon_1+s_2\varepsilon_2+a_{\beta}-a_{\alpha}}.\]
Hence,
\[Z(\varepsilon_1,\varepsilon_2,a,q )= Z^{M(r,n)}(\varepsilon_1,\varepsilon_2,a + \varepsilon_1 e,q) \cdot \prod_{\substack{1 \leq \alpha, \beta \leq n \\ (s_1,s_2) \in R^{e_{\alpha},e_{\beta}}_{\alpha,\beta}}} \frac{1}{ s_1\varepsilon_1+s_2\varepsilon_2+a_{\beta}-a_{\alpha}}.\]
\end{corollary}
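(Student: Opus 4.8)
The plan is to start from the explicit product formula for $Z(\varepsilon_1,\varepsilon_2,a,q)$ in the Proposition preceding the corollary and simplify it under the two hypotheses $m=1$ and $e$ being $2$-bounded non-decreasing. First I would record that for $m=1$ every congruence factor $\delta^{(m)}_{\cdot,\cdot}=\delta^{(1)}_{\cdot,\cdot}$ equals $1$, so all exponents in $n^M_{\alpha,\beta}$ collapse and the formula becomes an honest product of linear forms. I would also invoke Theorem~\ref{thm:cbbcelldec_cover} (with $m+1=2$), which guarantees that \eqref{eq:cellscover} holds for every $d$, so that the localization expressions of Proposition~\ref{prop:nekrasov_coeff} and of the preceding Proposition are valid and the partition function is well defined.

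The core of the argument is a factorization of $n^M_{\alpha,\beta}$ into an $M$-dependent piece and an $M$-independent piece. Writing $n^M_{\alpha,\beta}=Q^M_{\alpha,\beta}\cdot R_{\alpha,\beta}$, where $Q^M_{\alpha,\beta}$ is the product of the two factors indexed by $s\in Y_\alpha$ and $t\in Y_\beta$, and $R_{\alpha,\beta}=\prod_{(s_1,s_2)\in R^{e_\alpha,e_\beta}_{\alpha,\beta}}(s_1\varepsilon_1+s_2\varepsilon_2+a_\beta-a_\alpha)$ is the factor coming from the triangle set of \eqref{eq:Rdef}, the key point (already flagged in the text preceding the corollary) is that $R_{\alpha,\beta}$ depends only on $e$, not on the diagram $M$. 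Next I would identify $\prod_{\alpha,\beta}Q^M_{\alpha,\beta}$ with the Nekrasov denominator for the quiver variety $M(r,n)$ after the shift $a\mapsto a+\varepsilon_1 e$: substituting $a_\gamma\mapsto a_\gamma+\varepsilon_1 e_\gamma$ in the Nakajima--Yoshioka factor of \cite[Theorem~2.11]{nakajima2005instanton} replaces each $a_\beta-a_\alpha$ by $a_\beta-a_\alpha+(e_\beta-e_\alpha)\varepsilon_1$, which is exactly the shift $t_1^{e_\beta-e_\alpha}$ appearing in the $Q$-term of Theorem~\ref{thm:tangentcharm}. Pulling the $M$-independent factor $\prod_{\alpha,\beta}R_{\alpha,\beta}^{-1}$ out of the localization sum then leaves a sum over the Young-diagram parts $(Y_1,\dots,Y_n)$ alone, the triangle parts being pinned by $e$, and this is precisely $Z^{M(r,n)}(\varepsilon_1,\varepsilon_2,a+\varepsilon_1 e)$; this yields the fixed-degree identity.

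Passing to the generating series requires tracking the $q$-grading through the bijection $M\leftrightarrow(\text{core})\times(Y_1,\dots,Y_n)$. Here I would use Lemma~\ref{lem:dij}: for $m=1$ the core is forced by $c_{i1}=e_i$ and contributes the constant $A=\sum_\alpha e_\alpha(e_\alpha-1)/2$ to $|M|=d$, while the quotient partitions contribute $\sum_\alpha|Y_\alpha|$, so that $q^{|M|}=q^{A}\,q^{\sum_\alpha|Y_\alpha|}$. Summing the fixed-degree identity over all $d$ then reorganizes the right-hand side into the quiver partition function graded by instanton number $\sum_\alpha|Y_\alpha|$; the core power $q^{A}$ records exactly the shift between the bow degree $d$ and the quiver instanton number $d-A$ (compatibly with the $q^{A}$ prefactors already seen in Corollary~\ref{cor:Zminus,m=1} and Theorem~\ref{thm:m1motivic}, and with Example~\ref{ex:hilb2} in the rank-one case).

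I expect the main obstacle to be precisely this bookkeeping rather than any single computation. One must check that under the $2$-bounded hypothesis the bow fixed points at degree $d$ biject with quiver fixed points of instanton number $d-A$, that the triangle parts contribute only the uniform factor $\prod_{\alpha,\beta}R_{\alpha,\beta}^{-1}$ together with the uniform power $q^{A}$, and that the parameter shift $a\mapsto a+\varepsilon_1 e$ is compatible with this bijection. Once these are in place, the algebraic steps---the vanishing of the congruence constraints at $m=1$, the factorization $n^M_{\alpha,\beta}=Q^M_{\alpha,\beta}R_{\alpha,\beta}$, and the recognition of $\prod_{\alpha,\beta}Q^M_{\alpha,\beta}$ as a shifted Nakajima--Yoshioka factor---are routine substitutions.
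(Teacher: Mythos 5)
Your proposal is correct and follows essentially the same route as the paper, which deduces the corollary directly from the preceding proposition by observing that for $m=1$ all congruence conditions $\delta^{(1)}$ are vacuous and the $R$-factor is independent of the fixed point $M$, hence pulls out of the localization sum, while the remaining product is the Nekrasov denominator with $a$ shifted by $\varepsilon_1 e$. Your extra care with the $q$-grading --- the $q^{A}$ offset between the bow degree $d$ and the quiver instanton number $d-A$ coming from the core, via Lemma~\ref{lem:dij} --- addresses a bookkeeping point that the paper leaves implicit, and is handled correctly.
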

It would be exciting to construct a parabolic analogue of the moduli space of instantons on the blown-up plane and to investigate the counterpart of Nekrasov's regularity conjecture. As bow varieties are expected to parameterise parabolic sheaves on the plane blown-up already in one point, the above spaces could be related in fact to sheaves on the plane blown-up in two points. 

Beside observing their formal similarity, the precise relation between $Z(\varepsilon_1,\varepsilon_2,a,q)$ and the partition function of the blown-up plane \cite[(6.10)]{nakajima2005instanton} is another intriguing question we leave for further investigation.

\appendix

\section{Modularity}
\label{sec:modularity}
\begin{center}by \textsc{Gergely Harcos}\end{center}
\medskip

\begin{theorem}
\label{thm:mod}
Let $S$ be the set of those $3\times 3$ integral matrices $w=(w_{ij})$ which satisfy
\[\begin{pmatrix}1&1&1\end{pmatrix}w=\begin{pmatrix}1&1&1\end{pmatrix}w^T=\begin{pmatrix}3&2&1\end{pmatrix}.\]
Consider the quadratic polynomial
\[F(w):=\sum_{i,j}w_{ij}(w_{ij}-1)/2,\qquad w\in S,\]
and the generating function (cf. Example~\ref{example:321321})
\[Z_0(q):=\sum_{w\in S}q^{F(w)},\qquad|q|<1.\]
Then
\begin{equation}\label{eq2}
Z_0(q)=\sum_{n=0}^\infty\sigma(3n+1)q^n.
\end{equation}
\end{theorem}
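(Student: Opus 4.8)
The plan is to prove Theorem~\ref{thm:mod} by recognizing $Z_0(q)$ as a reparametrized theta series of a rank-four lattice and matching it with a weight-two Eisenstein series on $\Gamma_0(9)$. Since the entries of every $w\in S$ sum to $3+2+1=6$, one has $F(w)=\tfrac12\sum_{i,j}w_{ij}^2-3$, so that $Z_0(q)=q^{-3}\sum_{w\in S}q^{Q(w)/2}$, where $Q(w)=\sum_{i,j}w_{ij}^2$ is the squared Frobenius norm. The constraint set $S$ is a single coset $w_0+L_0$ (e.g.\ with $w_0=\mathrm{diag}(3,2,1)$) of the lattice $L_0=\{w\in\ZZ^{3\times3}:\text{all row and column sums vanish}\}$, on which $Q$ is positive definite and even. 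Using the plaquette basis $P_{ij}$ supported on the four corners of a $2\times2$ minor, one computes that the Gram matrix of $(L_0,Q)$ has determinant $81$; in fact $(L_0,Q)\cong A_2\otimes A_2$. Thus $L_0$ is an even lattice of rank $4$, discriminant $3^4$ and level $9$, and because its discriminant is a perfect square its full theta series $\Theta_{L_0}$ lies in $M_2(\Gamma_0(9))$ with trivial nebentypus.

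The identity to be proved is equivalent, after the substitution $q\mapsto q^3$ and multiplication by $q$, to the equality of two $q$-series $G:=q\,Z_0(q^3)$ and $H:=\sum_{m\equiv1\ (3)}\sigma(m)q^m$. Expanding $Q(w_0+\ell)$ for $\ell\in L_0$ shows that every exponent of $G=\sum_{w\in S}q^{3F(w)+1}$ lies in the residue class $1\pmod 3$, and that $G$ is a characteristic theta series attached to $A_2\otimes A_2$. By the theta transformation formula — equivalently, by the Weil representation of the discriminant form of $L_0$, whose conductor is a power of $3$ — this series is a holomorphic modular form of weight $2$ on $\Gamma_1(9)$, and on $\Gamma_0(9)$ once its character is seen to be trivial. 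I expect this to be the main obstacle: one must control the characteristic theta at all four cusps of $X_0(9)$ and verify that the multiplier is trivial, so that $G\in M_2(\Gamma_0(9))$ rather than a form of higher level or nontrivial nebentypus. A clean alternative that avoids the cusp bookkeeping is the Siegel–Weil formula: the quaternary form underlying $G$ is alone in its genus, which forces the theta series to equal the Eisenstein series of its space.

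Next I would place $H$ in the same space by a root-of-unity filter. Starting from $\sum_{m\geq1}\sigma(m)q^m=\tfrac{1}{24}\bigl(1-E_2\bigr)$ and writing $\omega=e^{2\pi i/3}$, the combination $H=\tfrac13\sum_{j=0}^{2}\omega^{-j}\Phi(\omega^jq)$, with $\Phi=\sum_m\sigma(m)q^m$, isolates the progression $m\equiv1\pmod3$. Although $E_2$ is only quasimodular, its anomalous term cancels in this symmetric combination, so $H$ is a genuine holomorphic weight-two Eisenstein series of level $9$; hence $H\in M_2(\Gamma_0(9))$.

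Finally I would conclude by dimension counting. The curve $X_0(9)$ has genus $0$, so $S_2(\Gamma_0(9))=0$, and with four cusps the Eisenstein subspace, hence all of $M_2(\Gamma_0(9))$, has dimension $3$. Two forms in this space that agree in finitely many Fourier coefficients therefore coincide, and comparing the coefficients of $G$ and $H$ at $q,q^4,q^7,\dots$ against the values $\sigma(1)=1,\ \sigma(4)=7,\ \sigma(7)=8,\dots$ already recorded in Example~\ref{example:321321} completes the proof. The only genuinely delicate point is the modularity and cusp behaviour of the characteristic theta $G$; everything else is either a finite computation or a standard application of the theory of weight-two Eisenstein series.
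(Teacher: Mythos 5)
Your setup is correct and is essentially the paper's: the identity $F(w)=\tfrac12\sum_{i,j}w_{ij}^2-3$, the description of $S$ as a coset $w_0+L_0$ of the rank-$4$ lattice of zero row- and column-sum matrices, and the Gram matrix of determinant $81$ are all right -- indeed your $A_2\otimes A_2$ Gram matrix (in the plaquette basis, up to sign of the off-diagonal entries) is exactly the matrix $A$ that the paper reaches by its explicit $(a,b,c,d)$ parametrization and completion of the square. The endgame (put both series in a finite-dimensional space of weight-$2$ forms and compare finitely many coefficients, which are available from Example~\ref{example:321321}) is also the paper's.

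The genuine gap is the one you flag yourself: the modularity of the characteristic theta series $G=q\,Z_0(q^3)$ is never actually established. You assert it lies in $M_2(\Gamma_0(9))$ with trivial character, but neither of your two suggested routes is carried out: the Weil-representation/cusp analysis is left as ``the main obstacle,'' and the Siegel--Weil alternative requires verifying that the genus of $A_2\otimes A_2$ has class number one \emph{and} computing local densities for the coset (a shifted-lattice representation problem, not the plain theta series), none of which is done. Since this is the entire analytic content of the theorem, the argument as written does not close. The paper avoids the difficulty by invoking Shimura's explicit transformation formula for $\theta(z;h,A,N,\nu)$, which directly exhibits $e(z/3)Z_0(e(z))$ as a weight-$2$ form on an explicit congruence subgroup conjugate to $\Gamma_0(27)$ -- it does not attempt the sharper level $9$. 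That retreat costs nothing: the Sturm bound for $M_2(\Gamma_0(27))$ is $6$, so the finite check is still trivial. A second, smaller soft spot: your root-of-unity filter only places $H=\sum_{m\equiv1(3)}\sigma(m)q^m$ in level $3\cdot3^2=27$ (this is what twisting $E_2(z)-3E_2(3z)$ by the mod-$3$ characters gives, and is exactly what the paper does); the claim that $H$ descends to level $9$ is true but needs a separate argument (e.g.\ exhibiting $H$ explicitly in the span of $E_2(z)-3E_2(3z)$, $E_2(z)-9E_2(9z)$ and the newform Eisenstein series attached to $(\chi_3,\chi_3)$), which you do not give. If you are content with level $27$ throughout, both issues disappear and your argument becomes the paper's proof.
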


\begin{proof}
We parametrize the integral matrices $w\in S$ by integral column vectors
\[m:=\begin{pmatrix}a&b&c&d\end{pmatrix}^T\in\ZZ^4\]
as follows:
\[w=\begin{pmatrix}
1&1&1\\1&1&0\\1&0&0
\end{pmatrix}+
\begin{pmatrix}
a&b&-a-b\\c&d&-c-d\\-a-c&-b-d&a+b+c+d
\end{pmatrix}.\]
With this parametrization, we can rewrite $F(w)$ as
\begin{equation}\label{eq1}
F(w)=\frac{m^TAm}{2}+d-a,
\end{equation}
where
\[A:=\begin{pmatrix}
4&2&2&1\\
2&4&1&2\\
2&1&4&2\\
1&2&2&4
\end{pmatrix}.\]
For later reference, we record that
\[\det(A)=81\qquad\text{and}\qquad 9A^{-1}=\begin{pmatrix*}[r]
4&-2&-2&1\\
-2&4&1&-2\\
-2&1&4&-2\\
1&-2&-2&4
\end{pmatrix*}.\]
In particular, the column vector
\[x:=A^{-1}\begin{pmatrix}-1&0&0&1\end{pmatrix}^T=\begin{pmatrix}-1/3&0&0&1/3\end{pmatrix}^T\]
satisfies
\begin{align*}
m^TAx&=\begin{pmatrix}a&b&c&d\end{pmatrix}\begin{pmatrix}-1&0&0&1\end{pmatrix}^T=d-a,\\
x^TAx&=\begin{pmatrix}-1/3&0&0&1/3\end{pmatrix}\begin{pmatrix}-1&0&0&1\end{pmatrix}^T=2/3,\\
\end{align*}
so that \eqref{eq1} becomes
\[F(w)=\frac{(m+x)^TA(m+x)}{2}-\frac{1}{3}.\]

As usual, we write $q=e(z):=e^{2\pi i z}$, where $\Im z>0$. Then, by the above calculation,
\[Z_0(e(z))=\sum_{m\in\ZZ^4}e\left(\frac{(m+x)^TA(m+x)}{2}z-\frac{1}{3}z\right)
=\sum_{v\in x+\ZZ^4}e\left(\frac{v^TAv}{2}z-\frac{1}{3}z\right).\]
It will be convenient to replace $v$ by $v/9$, and then multiply both sides by $e(z/3)$:
\[e(z/3)Z_0(e(z))=\sum_{v\in 9x+9\ZZ^4}e\left(\frac{v^TAv}{162}z\right).\]
Introducing
\[h:=9x=\begin{pmatrix}-3&0&0&3\end{pmatrix}^T,\]
this is just
\[e(z/3)Z_0(e(z))=\sum_{v\equiv h\pmod{9}}e\left(\frac{v^TAv}{162}z\right).\]
In the notation of Shimura~\cite[(2.0)]{shimura}, the right-hand side equals $\theta(z;h,A,9,1)$, which is apparently the same as $\theta(z;-h,A,9,1)$. Now \cite[Prop.~2.1]{shimura} and the subsequent remarks on \cite[p.~456]{shimura} show that this theta series is a modular form of weight $2$ for the congruence subgroup
\[\Gamma:=\left\{\begin{pmatrix}a&b\\c&d\end{pmatrix}\in\SL_2(\ZZ):\ b\equiv 0\pmod{3},\ c\equiv 0\pmod{9}\right\}.\]
We replace $z$ by $3z$, which has the same effect as replacing $\Gamma$ by its conjugate
\[\begin{pmatrix}1/3&0\\0&1\end{pmatrix}\Gamma\begin{pmatrix}3&0\\0&1\end{pmatrix}=\Gamma_0(27).\]
We conclude that $e(z)Z_0(e(3z))=q Z_0(q^3)$ is a modular form of weight $2$ and level $27$.

Finally, let us look at the Eisenstein series of weight $2$ and level $1$:
\[E(q):=-\frac{1}{24}+\sum_{m=1}^\infty\sigma(m)q^m,\qquad|q|<1.\]
By \cite[Ch.~III, Prop.~7]{koblitz}, the function $E(q)-3E(q^3)$ is a modular form of weight $2$ and level $3$. If we twist this modular form by the two mod $3$ Dirichlet characters, then by \cite[Ch.~III, Prop.~17]{koblitz}, we get two modular forms of weight $2$ and level $27$. The average of these two twisted modular forms equals
\[G(q):=\sum_{n=0}^\infty\sigma(3n+1)q^{3n+1},\qquad|q|<1.\]
By inspection, the coefficients of $G(q)$ agree with the coefficients of $q Z_0(q^3)$ up to degree $6$, which is the Sturm bound for weight $2$ and level $27$. This forces the identity
\[q Z_0(q^3)=G(q),\]
which is equivalent to \eqref{eq2}.
\end{proof}

\bibliographystyle{amsplain}
\bibliography{bow}

\end{document}